\documentclass[10pt,oneside]{amsart}

\usepackage{amssymb, amsmath, amsthm, amsfonts}
\usepackage{mathrsfs,comment}
\usepackage{graphicx}
\usepackage{comment}
\usepackage{todonotes}
\usepackage{placeins}
\usepackage{rotating}
\usepackage{tikz}
\usepackage{float}
\usepackage{hvfloat}
\usepackage{caption}
\usepackage{pdflscape}
\usepackage{notoccite}
\usepackage[bookmarks=true]{hyperref}  
\usepackage{url}
\usepackage[all,arc,2cell]{xy}
\UseAllTwocells
\usepackage{enumerate}
\usepackage{chngcntr}
 \usepackage{lineno}
 \usepackage{blindtext}
\usepackage{verbatim}
\usepackage{soul}
\usepackage[normalem]{ulem}
\usepackage{lscape}
\usepackage{geometry}
\usepackage{pifont}

\usepackage{tikz}
\usepackage{tikz-cd} 
\usepackage[makeroom]{cancel}

\hypersetup{%
  bookmarksnumbered=true,
  colorlinks=true,%
  linkcolor=blue,%
  citecolor=blue,%
  filecolor=blue,%
  menucolor=blue,%
  urlcolor=blue,%
  pdfnewwindow=true,%
  pdfstartview=FitBH}

\def\@url#1{{\tt\def~{\lower3.5pt\hbox{\char'176}}\def\_{\char'137}#1}}

\def\makeautorefname#1#2{\expandafter\def\csname#1autorefname\endcsname{#2}}
\makeautorefname{equation}{Equation}%
\makeautorefname{item}{item}%
\makeautorefname{figure}{Figure}%
\makeautorefname{table}{Table}%
\makeautorefname{part}{Part}%
\makeautorefname{appendix}{Appendix}%
\makeautorefname{chapter}{Chapter}%
\makeautorefname{section}{Section}%
\makeautorefname{subsection}{Section}%
\makeautorefname{subsubsection}{Section}%
\makeautorefname{paragraph}{Paragraph}%
\makeautorefname{subparagraph}{Paragraph}%
\makeautorefname{theorem}{Theorem}%
\makeautorefname{theo}{Theorem}%
\makeautorefname{thm}{Theorem}%
\makeautorefname{addendum}{Addendum}%
\makeautorefname{addend}{Addendum}%
\makeautorefname{add}{Addendum}%
\makeautorefname{maintheorem}{Main theorem}%
\makeautorefname{mainthm}{Main theorem}%
\makeautorefname{corollary}{Corollary}%
\makeautorefname{claim}{Claim}%
\makeautorefname{corol}{Corollary}%
\makeautorefname{coro}{Corollary}%
\makeautorefname{cor}{Corollary}%
\makeautorefname{lemma}{Lemma}%
\makeautorefname{lemm}{Lemma}%
\makeautorefname{lem}{Lemma}%
\makeautorefname{sublemma}{Sublemma}%
\makeautorefname{sublem}{Sublemma}%
\makeautorefname{subl}{Sublemma}%
\makeautorefname{proposition}{Proposition}%
\makeautorefname{proposit}{Proposition}%
\makeautorefname{propos}{Proposition}%
\makeautorefname{propo}{Proposition}%
\makeautorefname{prop}{Proposition}%
\makeautorefname{property}{Property}
\makeautorefname{proper}{Property}
\makeautorefname{scholium}{Scholium}%
\makeautorefname{step}{Step}%
\makeautorefname{conjecture}{Conjecture}%
\makeautorefname{conject}{Conjecture}%
\makeautorefname{conj}{Conjecture}%
\makeautorefname{question}{Question}
\makeautorefname{questn}{Question}
\makeautorefname{quest}{Question}
\makeautorefname{ques}{Question}
\makeautorefname{qn}{Question}
\makeautorefname{definition}{Definition}%
\makeautorefname{defin}{Definition}%
\makeautorefname{defi}{Definition}%
\makeautorefname{def}{Definition}%
\makeautorefname{defn}{Definition}%
\makeautorefname{dfn}{Definition}%
\makeautorefname{notation}{Notation}
\makeautorefname{nota}{Notation}
\makeautorefname{notn}{Notation}
\makeautorefname{remark}{Remark}%
\makeautorefname{rema}{Remark}%
\makeautorefname{rem}{Remark}%
\makeautorefname{rmk}{Remark}%
\makeautorefname{rk}{Remark}%
\makeautorefname{remarks}{Remarks}%
\makeautorefname{rems}{Remarks}%
\makeautorefname{rmks}{Remarks}%
\makeautorefname{rks}{Remarks}%
\makeautorefname{example}{Example}%
\makeautorefname{examp}{Example}%
\makeautorefname{exmp}{Example}%
\makeautorefname{exmps}{Examples}%
\makeautorefname{exam}{Example}%
\makeautorefname{exa}{Example}%
\makeautorefname{algorithm}{Algorith}%
\makeautorefname{algo}{Algorith}%
\makeautorefname{alg}{Algorith}%
\makeautorefname{axiom}{Axiom}%
\makeautorefname{axi}{Axiom}%
\makeautorefname{ax}{Axiom}%
\makeautorefname{case}{Case}%
\makeautorefname{claim}{Claim}%
\makeautorefname{clm}{Claim}%
\makeautorefname{assumption}{Assumption}%
\makeautorefname{assumpt}{Assumption}%
\makeautorefname{conclusion}{Conclusion}%
\makeautorefname{concl}{Conclusion}%
\makeautorefname{conc}{Conclusion}%
\makeautorefname{condition}{Condition}%
\makeautorefname{condit}{Condition}%
\makeautorefname{cond}{Condition}%
\makeautorefname{construction}{Construction}%
\makeautorefname{construct}{Construction}%
\makeautorefname{const}{Construction}%
\makeautorefname{cons}{Construction}%
\makeautorefname{criterion}{Criterion}%
\makeautorefname{criter}{Criterion}%
\makeautorefname{crit}{Criterion}%
\makeautorefname{exercise}{Exercise}%
\makeautorefname{exer}{Exercise}%
\makeautorefname{exe}{Exercise}%
\makeautorefname{problem}{Problem}%
\makeautorefname{problm}{Problem}%
\makeautorefname{probm}{Problem}%
\makeautorefname{prob}{Problem}%
\makeautorefname{solution}{Solution}%
\makeautorefname{soln}{Solution}%
\makeautorefname{sol}{Solution}%
\makeautorefname{summary}{Summary}%
\makeautorefname{summ}{Summary}%
\makeautorefname{sum}{Summary}%
\makeautorefname{operation}{Operation}%
\makeautorefname{oper}{Operation}%
\makeautorefname{observation}{Observation}%
\makeautorefname{observn}{Observation}%
\makeautorefname{obser}{Observation}%
\makeautorefname{obs}{Observation}%
\makeautorefname{ob}{Observation}%
\makeautorefname{convention}{Convention}%
\makeautorefname{convent}{Convention}%
\makeautorefname{conv}{Convention}%
\makeautorefname{cvn}{Convention}%
\makeautorefname{warning}{Warning}%
\makeautorefname{warn}{Warning}%
\makeautorefname{note}{Note}%
\makeautorefname{fact}{Fact}%

  \makeatletter
                   \let\c@lemma\c@theorem
                  \makeatother

%


\newtheorem{thm}{Theorem}[subsection]
\newtheorem{cor}[thm]{Corollary}
\newtheorem{prop}[thm]{Proposition}
\newtheorem{lem}[thm]{Lemma}

\theoremstyle{definition}
\newtheorem{defn}[thm]{Definition}
\newtheorem{exmp}[thm]{Example}
\newtheorem{exmps}[thm]{Examples}
\newtheorem{rem}[thm]{Remark}
\newtheorem{conj}[thm]{Conjecture}
\newtheorem{rmk}[thm]{Remark}

\newtheorem{notation}[thm]{Notation}

\newtheorem{assumption}[thm]{Assumption}

\makeatletter
\let\c@lem=\c@thm
\let\c@cor=\c@thm
\let\c@prop=\c@thm
\let\c@lem=\c@thm
\let\c@defn=\c@thm
\let\c@exmps=\c@thm
\let\c@rem=\c@thm
\let\c@warn=\c@thm
\let\c@claim=\c@thm
\let\c@quest=\c@thm
\makeatother

\numberwithin{equation}{subsection}


\newcommand{\Z}{\mathbb{Z}}

\newcommand{\R}{\mathbb{R}}

\newcommand{\gmM}{{}^{g}\!\m{M}}

\DeclareSymbolFontAlphabet{\scr}{rsfs}
\newcommand{\sC}{\scr{C}}

\newcommand{\sB}{\scr{B}}
\newcommand{\sR}{\scr{R}}

\newcommand{\K}{\textnormal{K}}
\newcommand{\TR}{\textnormal{TR}}
\newcommand{\Perf}{\mathsf{Perf}}
\newcommand{\End}{\mathsf{End}}
\newcommand{\Aut}{\mathsf{Aut}}

\def\quickop#1{\expandafter\newcommand\csname #1\endcsname{\operatorname{#1}}}
\quickop{Hom} 
\quickop{Tel} \quickop{Mic} 
\quickop{Ext} \quickop{Tor} \quickop{Cotor} \quickop{Id} \quickop{Coker} \quickop{Ker}
\quickop{Lim} \quickop{Colim} \quickop{Holim} \quickop{Hocolim}
\quickop{id} \quickop{tel} \quickop{mic} \quickop{coker} 
\quickop{colim} 
\quickop{hocolim} \quickop{im}
\DeclareMathOperator*{\holim}{holim}

\DeclareMathOperator{\Ho}{Ho}
\DeclareMathOperator{\op}{\operatorname{op}}
\renewcommand{\Bar}{\operatorname{Bar}_\bullet}


\DeclareMathOperator{\THH}{THH}
\DeclareMathOperator{\HH}{HH}

\newcommand{\Mack}{\textrm{Mack}_G}

\newcommand{\m}[1]{\underline{#1}}


\newcommand{\cat}{\mathsf}
\newcommand{\vp}{\varphi}
\newcommand{\ve}{\varepsilon}

\newcommand\adjunction[4]{\xymatrix{#1\ar @<1.18ex>[rr]^{#3}&\perp&#2\ar @<1.18ex>[ll]^{#4}}}

\newcommand\Sh[2]{\lceil {#2}\rceil_{#1}}
\newcommand\tr [1]{\operatorname{tr}(#1)}
\newcommand\mab [3] {{}_{#2}#1_{#3}}
\newcommand\Ob{\operatorname{Ob}}
\renewcommand\Id{\mathrm{Id}}
\newcommand{\Vaut}{\cat V_{\mathrm{aut}}}
\newcommand{\Vautinj}{(\cat V_{\textrm{aut}})_{\textrm {inj}}}
\newcommand{\Vautproj}{(\cat V_{\textrm{aut}})_{\textrm {proj}}}

\definecolor{darkspringgreen}{rgb}{0.09, 0.45, 0.27}
\definecolor{darkterracotta}{rgb}{0.8, 0.31, 0.36}
	\definecolor{darkcoral}{rgb}{0.8, 0.36, 0.27}
	\definecolor{indiagreen}{rgb}{0.07, 0.53, 0.03}
	\definecolor{mountainmeadow}{rgb}{0.19, 0.73, 0.56}
	\definecolor{mountbattenpink}{rgb}{0.6, 0.48, 0.55}
	\definecolor{palatinatepurple}{rgb}{0.41, 0.16, 0.38}
	\definecolor{cinnamon}{rgb}{0.82, 0.41, 0.12}
	\definecolor{chocolate}{rgb}{0.82, 0.41, 0.12}




\newcommand{\sm}{\wedge}

\renewcommand{\id}{\mbox Id}

\newcommand{\HC}{\textnormal{HC}}


\usepackage[utf8]{inputenc}
\usepackage{adjustbox}
\title{A shadow perspective on equivariant Hochschild homologies }
\author[Adamyk]{Katharine Adamyk}
\address[Adamyk]{Department of Mathematics, University of Western Ontario, London, ON, Canada}
\author[Gerhardt]{Teena Gerhardt}
\address[Gerhardt]{Department of Mathmematics, Michigan State University, East Lansing, MI 48824 }
\author[Hess]{Kathryn Hess}
\address[Hess]{SV UPHESS BMI, \'Ecole Polytechnique F\'ed\'erale de Lausanne, 1015 Lausanne, Switzerland}
\author[Klang]{Inbar Klang}
\address[Klang]{Department of Mathematics, Columbia University, New York, NY 10027. Email inbarklang@gmail.com}
\author[Kong]{Hana Jia Kong}
\address[Kong]{School of Mathematics, Institute for Advanced Study, Princeton, NJ 08540}

\begin{document}

\maketitle
\begin{abstract}
Shadows for bicategories, defined by Ponto, provide a useful framework that generalizes classical and topological Hochschild homology.  In this paper, we define Hochschild-type invariants for monoids in a symmetric monoidal, simplicial model category $\cat V$, as well as for small $\cat V$-categories.  We show that each of these constructions extends to a shadow on an appropriate bicategory, which implies in particular that they are Morita invariant. We also define a generalized theory of Hochschild homology twisted by an automorphism and show that it is Morita invariant. Hochschild homology of Green functors and $C_n$-twisted topological Hochschild homology fit into this framework, which allows us to conclude that these theories are Morita invariant. We also study linearization maps relating the topological and algebraic theories, proving that the linearization map for topological Hochschild homology arises as a lax shadow functor, and constructing a new linearization map relating topological restriction homology and algebraic restriction homology. Finally, we construct a twisted Dennis trace map from the fixed points of equivariant algebraic $K$-theory to twisted topological Hochschild homology.
\end{abstract}




\section{Introduction}\label{sec:intro}

Theories of Hochschild homology play an essential role in the trace method approach to algebraic $K$-theory. Classical Hochschild homology of a ring $R$ receives a map from the algebraic $K$-theory of $R$, called the Dennis trace,
\[
K_q(R) \to \HH_q(R). 
\]
In this sense, Hochschild homology of a ring serves as an approximation to algebraic $K$-theory. To arrive at a much better approximation, however, one needs to consider a topological analogue of Hochschild homology, called topological Hochschild homology, $\THH(R)$. There is similarly a trace map from algebraic $K$-theory to topological Hochschild homology, which factors the Dennis trace 
\[
K_k(R) \to \pi_k\THH(R) \to \HH_k(R).
\] Through an understanding of the full structure of topological Hochschild homology, one can then define topological cyclic homology (see \cite{BHM,NS}), which often closely approximates algebraic $K$-theory. 

In recent years, equivariant generalizations of Hochschild homology and topological Hochschild homology have been developed. In \cite{AnBlGeHiLaMa} the authors define  $C_n$-twisted topological Hochschild homology for a $C_n$-equivariant ring spectrum $R$, denoted $\THH_{C_n}(R)$, where $C_n$ is the cyclic group of order $n$. Twisted topological Hochschild homology has an algebraic analogue as well, twisted Hochschild homology for Green functors, $\m{\HH}_k^{C_n}$, as defined in \cite{BlGeHiLa}. 

In the classical setting the Hochschild homology of a ring $R$ and its topological analogue, $\THH(R)$ are related by a map
\[
\pi_k \THH(R) \to \HH_k(R).
\]
More generally, for $A$ a $(-1)$-connected ring spectrum this map takes the form
\[
\pi_k \THH(A) \to \HH_k( \pi_0A), 
\]
and is an isomorphism in degree 0. As we will see in Section \ref{sec:laxshadow}, this map is induced by the linearization map $A \to H\pi_0A$ of a $(-1)$-connected ring spectrum, where $H\pi_0A$ denotes the Eilenberg--MacLane spectrum of $\pi_0A$. Hence, this map relating topological Hochschild homology and Hochschild homology is referred to as the linearization map. 
Similarly, in the equivariant setting there is a linearization map relating twisted topological Hochschild homology and the Hochschild homology for Green functors. In particular, for $R$ a $(-1)$-connected $C_n$-ring spectrum, linearization for twisted topological Hochschild homology is a map
\[
\m{\pi}^{C_n}_k \THH_{C_n}(R) \to \m{\HH}_k^{C_n}(\m{\pi}_0R),
\]
where $\m{\pi}^{C_n}_*$ denotes the homotopy Mackey functors for the group $C_n$. In \cite{BlGeHiLa}, the authors construct this linearization map and prove it is an isomorphism in degree 0.

In \cite{aghkk} we developed computational tools to study twisted topological Hochschild homology, including an equivariant B\"okstedt spectral sequence, and computed several examples of twisted THH. In this article, we adopt the framework of bicategorical shadows to deepen our understanding of twisted THH and Hochschild homology for Green functors.

The notion of a shadow on a bicategory, introduced by Ponto in \cite{ponto:shadow},  has proved instrumental in the study of (topological) Hochschild homology, topological restriction homology, algebraic K-theory, and fixed point theory. Both classical Hochschild homology  and its homotopical generalization, topological Hochschild homology of ring spectra and of spectral categories, are examples of shadows \cite{PS},\cite{campbell-ponto}. An advantage of the shadow approach to studying Hochschild theories is that their Morita invariance is immediate once we have established that they are indeed shadows. Viewing rings and modules in a bicategorical framework, Morita equivalence serves as the natural notion of equivalence between 0-cells in a bicategory. As shadows respect  bicategorical structure, they are invariant under Morita equivalences.

The shadow framework also allows for an easy proof that the topological Hochschild homology of a ring spectrum $A$ agrees with the topological Hochschild homology of its category of perfect modules, $\cat{Perf}_A$. One can consider $A$ and $\cat{Perf}_A$ as 0-cells of a bicategory in which they are Morita equivalent. Since topological Hochschild homology is a shadow, the agreement result follows.

It is clear from the examples above that the theory of shadows provides a powerful and useful organizing principle for Hochschild-type theories.  There is therefore good reason to elaborate a general framework in which to define Hochschild homology of ``ring objects" and to show that it is a shadow and thus deduce Morita invariance and an appropriate version of agreement.   In Section \ref{subsec:oneobj} we construct a Hochschild-type invariant of monoids in a nice enough symmetric monoidal, simplicial model category $\cat V$, and show that it extends to a shadow on the bicategory $\sR_{\cat V}$ of (cofibrant) monoids and (homotopy classes of) bimodules, from which we deduce Morita invariance of this generalized Hochschild homology (Corollary \ref{cor:morita}). 

We then extend this Hochschild-type invariant in a natural manner to ``monoids in $\cat V$ with many objects", i.e., small $\cat V$-categories, in Section \ref{subsec:manyobj}. The ``many objects" version of bimodules is formulated in terms of enriched functors and enriched natural transformations.  We show that this invariant of $\cat V$-categories  extends to a shadow on the bicategory $\sC_{\cat V}$ of (pointwise cofibrant) $\cat V$-categories and (homotopy classes of) enriched bimodules.  We then prove that a monoid $A$ in $\cat V$, seen as a $\cat V$-category with one object, is Morita equivalent in $\sC_{\cat V}$ to (a cofibrant replacement of) its category of perfect modules (Proposition \ref{prop:perfmorita}), from which we deduce that the Hochschild homology of $A$ is indeed isomorphic to that of its category of perfect modules (Corollary \ref{cor:agreement}).

In Section \ref{sec:equivHochschild} we develop a construction of Hochschild homology twisted by an automorphism and prove that this new framework, which is not quite a shadow but does satisfy Morita equivalence, encompasses existing notions of equivariant Hochschild homology theories. In particular, we prove that twisted topological Hochschild homology and Hochschild homology for Green functors fit into this framework and hence are Morita invariant. The following theorem combines Theorems \ref{cor-twisted-thh-morita} and \ref{cor-HH-Green-morita}.

\begin{thm}   Twisted topological Hochschild homology, $\THH_{C_n}$, and Hochschild homology for Green functors, $\m{\HH}^{C_n}_H$, satisfy Morita invariance. 
\end{thm}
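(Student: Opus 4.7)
The plan is to deduce both parts of the theorem from the general Morita invariance statement for the automorphism-twisted Hochschild homology construction developed in Section~\ref{sec:equivHochschild}. In other words, I would realize $\THH_{C_n}$ and $\m{\HH}^{C_n}_H$ as instances of one unified construction, and let the general result take over.

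First, I would fix an appropriate ambient symmetric monoidal simplicial model category $\cat V$ for each case. For $\THH_{C_n}(R)$, the natural choice is $\cat V = \Spec$ (orthogonal spectra or $\Sn$-modules), and the input datum for the general framework is a cofibrant associative ring spectrum $A$ equipped with an automorphism $\vp\in \Aut(A)$ of order $n$; specializing to $A=R^{\otimes n}$ with $\vp$ the cyclic permutation recovers the $C_n$-fixed points of the cyclic bar construction that defines $\THH_{C_n}(R)$ as in \cite{AnBlGeHiLaMa}. For $\m{\HH}^{C_n}_H(\m{R})$, the ambient category is that of $C_n$-Mackey functors under the box product, and the input datum is a Green functor $\m{R}$ with its canonical automorphism; the general construction should then reproduce the definition of \cite{BlGeHiLa} on the nose.

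Next, I would verify these identifications. For twisted $\THH$, this amounts to unpacking both sides as geometric realizations of twisted cyclic bar constructions and checking that the twisting isomorphisms agree; this is essentially bookkeeping once the correct bicategory of $C_n$-equivariant ring spectra and equivariant bimodules has been set up as a sub-bicategory of the $\sR_{\cat V}$ considered in Section~\ref{subsec:oneobj}. For Green functors, the corresponding check uses the norm/box-product description of $\m{\HH}^{C_n}_H$ and again matches with the twisted cyclic bar construction in $C_n$-Mackey functors.

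Once the identifications are in place, Morita invariance is immediate: the general Morita invariance for automorphism-twisted Hochschild homology, established in Section~\ref{sec:equivHochschild}, applies verbatim to both instances. The main obstacle is really the identification step, and in particular establishing it in a way that is compatible with the bicategorical structure. Specifically, one must ensure that a Morita equivalence of $C_n$-equivariant ring spectra (resp.\ of Green functors) in the relevant bicategory genuinely lifts to a Morita equivalence in the larger bicategory $\sR_{\cat V}$ equipped with the automorphism data, so that the general invariance result can be applied. This is where the care taken in Section~\ref{sec:equivHochschild} to formulate the twisted invariant compatibly with bimodule composition becomes essential, and it is what reduces the present theorem to a direct appeal to the general framework.
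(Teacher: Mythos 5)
Your overall plan---realize both theories as instances of the automorphism-twisted Hochschild construction of Section \ref{Section:twisted} and invoke the general Morita invariance of Proposition \ref{cor:twisted_morita}---is exactly the paper's route, but your instantiation for twisted THH is wrong. The paper takes $\cat V=\cat{Sp}^{C_n}$, genuine orthogonal $C_n$-spectra (checked to satisfy Assumption \ref{assumption:oneobj} in Example \ref{exmp:Vaut}), with input the $C_n$-ring spectrum $R$ itself equipped with the automorphism given by the action of a generator $g\in C_n$; Proposition \ref{prop:THH_case} then identifies $\THH_{C_n}(R)\cong \HH^{g}(R)=\HH_{\cat V}(R;{}^{g}R)$ as $C_n$-spectra. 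Your proposed setup---nonequivariant spectra with $A=R^{\otimes n}$ and $\vp$ the cyclic permutation, claimed to recover ``the $C_n$-fixed points of the cyclic bar construction defining $\THH_{C_n}(R)$''---does not describe $\THH_{C_n}(R)$: twisted THH of a $C_n$-ring spectrum is a genuine $C_n$-spectrum, the realization of the twisted cyclic bar construction $B^{\text{cyc},C_n}_\bullet(R)$ with $q$-simplices $R^{\wedge(q+1)}$ and last face twisted by $g$, not a fixed-point object built from $R^{\wedge n}$, and Theorem \ref{cor-twisted-thh-morita} asserts an equivalence of $C_n$-spectra for $C_n$-ring spectra that are Morita equivalent through $C_n$-equivariant bimodules. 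A nonequivariant framework cannot yield that conclusion, and you do not explain how a Morita equivalence of $C_n$-ring spectra would produce Morita data relating $R^{\otimes n}$ and $S^{\otimes n}$ with their permutation automorphisms. The ``lifting'' worry you raise at the end is, in the correct setup, resolved for free: every $C_n$-equivariant bimodule carries the canonical automorphism $g$, compatible with the actions by equivariance, so a Morita equivalence in $\sR_{\cat{Sp}^{C_n}}$ lifts canonically to $\sR_{\Vaut}$ (and note that, per Remark \ref{rmk:notashadow}, the twisted theory is not a shadow, but cyclic invariance alone suffices for the Morita argument, which is all you use).

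For Green functors your identification is essentially correct when $H=C_n$: one works with constant simplicial Green functors in $\cat V=s\mathsf{Mack}_{C_n}$ and observes that the diagonal of the bisimplicial cyclic bar construction recovers the twisted cyclic nerve, so $\m{\HH}^{C_n}$ is the homology of a $g$-twisted Hochschild homology. But the theorem concerns $\m{\HH}^{C_n}_H$ for arbitrary $H\subset C_n$, where the construction is $\m{\HH}^{C_n}(N_H^{C_n}\m R)$, and here your phrase ``the check uses the norm'' hides the actual content. The missing step (the heart of the paper's proof of Theorem \ref{cor-HH-Green-morita}) is that $N_H^{C_n}$ is strong symmetric monoidal and preserves sifted colimits, hence geometric realizations of bar constructions, so that $N_H^{C_n}(\m M\odot\m N)\cong N_H^{C_n}\m M\odot N_H^{C_n}\m N$; this is what transports cyclic invariance of $\HH^{g}$ at the $C_n$-level to cyclic invariance of $\m{\HH}^{C_n}_H$ viewed as a functor of $H$-Green functors and $H$-bimodules, after which the argument of Proposition \ref{cor:twisted_morita} applies to the $H$-level Morita equivalence. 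Without this norm compatibility your reduction does not go through for $H\subsetneq C_n$.
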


 We also investigate the linearization map
 \[
\pi_k \THH(R) \to \HH_k(\pi_0 R)
\]
in the context of bicategorical shadows. In particular, we show in Proposition \ref{prop-EM-lax} that this linearization map arises from a lax shadow functor between these bicategorical shadows. Lax shadow functors, defined in \cite{PS}, can be thought of as structure-preserving natural transformations between bicategorical shadows.

This perspective on linearization maps also allows us to define a new linearization map, from topological restriction homology ($\TR$) to algebraic restriction homology, $tr$. This algebraic version of $\TR$, $tr$, was defined in \cite{BlGeHiLa} using Hochschild homology for Green functors. We prove that $tr$ is a bicategorical shadow and conclude that it is Morita invariant. We also show that our new linearization map from $\TR$ to $tr$ arises from a lax shadow functor. The following is proven as Propositions \ref{prop-tr-shadow} and \ref{prop-tr-lax} and Corollary \ref{cor-lin-map}.

\begin{thm} For a $(-1)$-connected ring spectrum $R$, there is a linearization map \[\pi_k \TR(R) \to tr_k(\pi_0(R)).\]  This linearization map arises from a lax shadow functor.

\end{thm}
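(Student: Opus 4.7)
The plan is to proceed in three connected stages corresponding to the propositions cited: first establish that $tr$ defines a bicategorical shadow, then construct the linearization map levelwise, and finally verify that the assembly is a lax shadow functor.

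For the shadow structure on $tr$ (Proposition~\ref{prop-tr-shadow}), I would work in a bicategory whose 0-cells are Green functors in the appropriate equivariant setting, 1-cells are bimodules, and 2-cells are bimodule maps up to homotopy. Recalling that $tr_k(\m{A})$ is built as an inverse limit of the twisted Hochschild homologies $\m{\HH}^{C_{p^n}}_k(\m{A})$ along restriction maps, the required cyclic trace symmetry isomorphism $tr(M \otimes_{\m{B}} N) \cong tr(N \otimes_{\m{A}} M)$ would be obtained by applying the corresponding Morita-type symmetry for each $\m{\HH}^{C_{p^n}}$ from Theorem~\ref{cor-HH-Green-morita} at each level $n$ and passing to the limit. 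Naturality of the Morita symmetry with respect to restriction transition maps ensures these levelwise isomorphisms assemble coherently; the unit and associativity shadow axioms would be verified analogously. Morita invariance of $tr$ then follows formally from $tr$ being a shadow.

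For the linearization map (Corollary~\ref{cor-lin-map}), the key input is Proposition~\ref{prop-EM-lax}: the Eilenberg--MacLane linearization $R \to H\pi_0 R$ induces, at each level $n$, a lax shadow functor from twisted topological Hochschild homology to twisted Hochschild homology of Green functors. Evaluating on $R$ regarded as a bimodule over itself at each finite level produces compatible maps $\pi_k \THH(R)^{C_{p^n}} \to \m{\HH}^{C_{p^n}}_k(\m{\pi}_0 R)$ (appropriately evaluated at the top-level orbit), and compatibility with the restriction maps follows from the naturality of linearization with respect to the cyclotomic structure on $\THH$. Passing to the inverse limit over $n$ then produces the desired linearization map $\pi_k \TR(R) \to tr_k(\pi_0 R)$. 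To upgrade this comparison to a lax shadow functor (Proposition~\ref{prop-tr-lax}), I would argue that the defining natural transformations of the levelwise lax shadow functors, being compatible with restriction, assemble into a single lax shadow functor between the inverse-limit bicategories, since lax shadow functors behave well under compatible inverse limits.

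The main obstacle is handling these inverse limits in a homotopically controlled manner. One must verify that the Morita-type cyclic symmetry isomorphisms defining the shadow on $tr$ can be chosen coherently with respect to restriction transition maps, and that both the trace symmetry compatibilities and the lax shadow functor axioms pass to the limit on the nose rather than only up to higher coherence. This likely requires either careful point-set-level constructions of restriction-compatible trace symmetry isomorphisms or an $\infty$-categorical formulation in which such coherences are automatic; the framework developed in Section~\ref{sec:equivHochschild} for the twisted theories should supply the tools needed to carry this out.
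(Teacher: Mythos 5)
Your overall architecture does match the paper's: $tr_k$ is exhibited as a shadow by taking an inverse limit of levelwise shadows along the restriction maps (Proposition~\ref{prop-tr-shadow}), the Eilenberg--MacLane functor gives restriction-compatible levelwise lax shadow functors that assemble in the limit (Proposition~\ref{prop-tr-lax}), and the map for a $(-1)$-connected $R$ comes from composing with $R \to H\pi_0 R$ (Corollary~\ref{cor-lin-map}). Your stated ``main obstacle'' about homotopically controlled inverse limits is, however, not where the difficulty lies: the paper just uses the natural map $\pi_k \holim \to \lim \pi_k$ and verifies all shadow and lax-functor coherences after applying $\pi_k$ (respectively, homology), i.e., in abelian groups, where compatible levelwise data pass to the limit on the nose; no $\infty$-categorical upgrade is needed.

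The genuine gaps are elsewhere. First, you propose to get the levelwise shadow structure on the algebraic side from Theorem~\ref{cor-HH-Green-morita} and to verify the unit and associativity axioms ``analogously,'' but that theorem only gives cyclic invariance: by Remark~\ref{rmk:notashadow} the twisted Hochschild homology of Green functors is \emph{not} a shadow, since the hexagon and unit coherences fail in the presence of the $g$-twist. The actual input is Proposition~\ref{prop-HH-Green-fp-shadow}: the coherence diagrams commute only after evaluating at the orbit $G/G$, where the Weyl-group (hence $g$-) action trivializes. The same mechanism (passage to $C_{p^n}$-fixed points, Proposition~\ref{prop:fixedpointsshadow}) is what makes $\pi_k\TR$ with coefficients a shadow (Proposition~\ref{prop-TR-shadow}), a point your proposal never addresses even though it is required for ``lax shadow functor between $\pi_k\TR$ and $tr_k$'' to make sense. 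Second, your levelwise comparison maps are asserted via Proposition~\ref{prop-EM-lax}, which is the non-equivariant statement; what is needed is the equivariant version (Proposition~\ref{prop-equiv-EM-lax}) \emph{precomposed with the norm} $N_e^{C_{p^n}}$, together with the natural transformation $N_e^{C_{p^n}} H \to H N_e^{C_{p^n}}$ (itself a linearization map, using $N_e^{C_{p^n}}\m{M} = \m{\pi}_0^{C_{p^n}}(N_e^{C_{p^n}} H\m{M})$), in order to pass from $\THH^{(p^n)}(HA;HM) = \THH_{C_{p^n}}(N_e^{C_{p^n}}HA;N_e^{C_{p^n}}HM)$ to $\m{\HH}_e^{C_{p^n}}(A;M)_k(C_{p^n}/C_{p^n})$. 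This norm comparison is the main technical content of Proposition~\ref{prop-tr-lax}; without it, ``evaluating on $R$ at each finite level'' does not produce the asserted maps, and the compatibility with restriction is established through the interaction of the norm with geometric fixed points (as in Proposition~\ref{prop-TR-shadow}) rather than by naturality with respect to the cyclotomic structure alone.
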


One of the central properties of Hochschild theories is that they receive trace maps from algebraic K-theory. The Dennis trace map from algebraic K-theory to Hochschild homology factors through $\THH$. It was shown in \cite{BlGeHiLa} that the Dennis trace map also factors through Hochschild homology for Green functors and through algebraic $tr$. 

It is natural to ask whether twisted topological Hochschild homology, $\THH_{C_n}$, also receives a trace map from a version of algebraic K-theory. In Theorem \ref{thm-dennis-trace} and Corollary \ref{cor:twisted}, we prove that indeed it receives a trace map from the fixed points of equivariant algebraic K-theory, $\K_{C_n}$. Like the classical Dennis trace $\K \to \THH$, this map is given by taking the trace of an endomorphism (for example, the trace of a matrix.)

\begin{thm}Let $R$ be a $C_n$-ring spectrum. There is a twisted Dennis trace map $$\K_{C_n}(R)^{C_n} \to \THH_{C_n}(R).$$
\end{thm}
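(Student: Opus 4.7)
The plan is to mimic Ponto's construction of the classical Dennis trace using the bicategorical framework developed in Section~\ref{sec:equivHochschild}, interpreting $\K_{C_n}(R)^{C_n}$ as the algebraic $K$-theory of a category of perfect $R$-modules equipped with a $C_n$-equivariant structure, and then taking the trace of that equivariant structure in the twisted bicategorical shadow.

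First, I would exhibit a concrete model for $\K_{C_n}(R)^{C_n}$ in terms of $(P, \alpha)$, where $P$ is a perfect $R$-module and $\alpha \colon P \to \phi^{*}P$ is an isomorphism encoding the $C_n$-equivariance (with $\phi$ the generator of the $C_n$-action on $R$), subject to the cocycle condition $(\phi^{n-1})^{*}\alpha \circ \cdots \circ \phi^{*}\alpha \circ \alpha = \id_P$. The pair $(P,\alpha)$ can be regarded as an endomorphism in the twisted bicategory $\sR_{\cat V}$ of Section~\ref{sec:equivHochschild}: the module $P$ is a $1$-cell from the unit to $R$, and $\alpha$ is a $2$-cell from $P$ to its twist $\phi^{*}P$. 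Using the twisted (Ponto-style) trace available from the bicategorical/Morita framework of Section~\ref{sec:equivHochschild}, each such pair has a well-defined trace $\tr(\alpha) \in \pi_0 \THH_{C_n}(R)$.

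Next, I would upgrade this pointwise assignment to a map of spectra. The key input is that $\THH_{C_n}$, being constructed from a shadow, is additive and sends direct sums of modules to sums of traces and exact sequences to the expected relations; this is the twisted analogue of the classical verification that the Dennis trace is additive. Combined with functoriality of the trace under isomorphisms of pairs $(P, \alpha)$, this shows that the trace assignment factors through the group completion defining $\K_{C_n}(R)^{C_n}$. Concretely, I would invoke Waldhausen's additivity to promote the trace from a map of groupoids (or exact categories) to a map of $K$-theory spectra landing in $\THH_{C_n}(R)$, interpreted as a genuine $C_n$-spectrum via its cyclotomic-type structure from \cite{AnBlGeHiLaMa}.

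Finally, I would check compatibility with the structures already in play: the map should recover the usual Dennis trace $\K(R) \to \THH(R)$ after forgetting equivariance (so $\alpha = \id$ and $\phi = \id$ give back the classical construction), and it should be compatible with the Morita invariance properties from Theorems~\ref{cor-twisted-thh-morita} and \ref{cor-HH-Green-morita}, so that replacing $R$ by a Morita equivalent $C_n$-ring produces a compatible trace. The main obstacle, in my view, is not the pointwise trace, which is essentially given by the twisted shadow machinery, but rather the identification of $\K_{C_n}(R)^{C_n}$ with the $K$-theory of pairs $(P, \alpha)$ and the verification of spectrum-level additivity; this is where the cocycle condition matters and where care is needed to ensure that the trace interacts correctly with the genuine $C_n$-structure on twisted $\THH$ rather than only with underlying spectra.
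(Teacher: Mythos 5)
Your high-level skeleton does resemble the paper's (Theorem \ref{thm-dennis-trace} combined with Corollary \ref{cor:twisted}): identify $\K_{C_n}(R)^{C_n}$ with the $K$-theory of perfect $R$-modules equipped with a $g$-semilinear isomorphism, then trace such an object into $\THH(R;{}^{g}R)\cong\THH_{C_n}(R)$. But both essential steps are asserted rather than supplied. For the identification, the paper does not build a model by hand; it quotes Malkiewich--Merling \cite{MM} (their Theorem 1.3), which gives $\K_{C_n}(R)^{C_n}\simeq \K(\Perf_{(R,g)})$, where $\Perf_{(R,g)}$ is the category of perfect left modules in $\Vaut$ --- note in particular that the paper's objects are just pairs $(P,\gamma)$ with $\gamma\colon P\to{}^{g}P$ an isomorphism of left modules, with no analogue of your strict cocycle condition imposed. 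You yourself flag this identification as a main obstacle, but without \cite{MM} (or an argument of comparable substance) the statement about $\K_{C_n}(R)^{C_n}$ is simply not established.

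The more serious gap is the construction of the trace as a map of spectra. You propose to take the Ponto-style bicategorical trace of each $\alpha$, landing in $\pi_0\THH_{C_n}(R)$, and then ``invoke Waldhausen's additivity'' to promote this to a spectrum-level map. Additivity does not do that: a compatible family of $\pi_0$-level traces is far from a map of $K$-theory spectra, and the coherence needed to rigidify such an assignment is exactly the hard content of trace-map constructions. (There is also a wrinkle in your framing: twisted Hochschild homology is \emph{not} a shadow, cf.\ Remark \ref{rmk:notashadow}; the correct setting is the ordinary $\THH$ shadow evaluated on the twisted coefficient bimodule ${}^{g}R$.) The paper avoids all of this by observing that $\Perf_{(R,g)}=\Aut(R,{}^{g}R)$ includes into Betley's category $\End(R,{}^{g}R)$ of ${}^{g}R$-parametrized endomorphisms \cite{Bet}, and that a spectrum-level trace $\K(\End(R,M))\to\THH(R;M)$ already exists (Section 3 of \cite{DM}); composing these and using $\THH(R;{}^{g}R)\cong\THH_{C_n}(R)$ (Proposition \ref{prop:THH_case}) yields the twisted Dennis trace. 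Your pointwise bicategorical description appears in the paper only as the $\pi_0$-identification of this map (Remark \ref{rem-HS}, via \cite{CLMPZ}), not as its construction. So the missing idea is precisely the appeal to the existing trace for parametrized endomorphism categories (or an equivalent genuine spectrum-level construction); without it, your outline does not prove the theorem.
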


For a prime $p$ that does not divide $n$, $\THH_{C_n}(R)$ has a $p$-cyclotomic structure \cite{AnBlGeHiLaMa}. This can be used to define twisted topological restriction homology, $\TR_{C_n}(R;p)$. We use work on algebraic K-theory from the shadow perspective \cite{CLMPZ} to prove that when $p$ is prime to $n$, the trace map $\K_{C_n}(R)^{C_n} \to \THH_{C_n}(R)$ lifts to $\TR_{C_n}(R;p)$.

\subsection{Organization}
In Section \ref{sec:background}, we cover necessary background on bicategories and shadows. In Section \ref{sec:shadow}, we define a generalized Hochschild-type invariant and prove that it is a shadow; we conclude that it satisfies Morita invariance. In Section \ref{sec:equivHochschild}, we develop a theory of Hochschild homology twisted by an automorphism and use this generalized framework to prove that $\THH_{C_n}$, and $\m{\HH}^{C_n}$ are Morita invariant. In Section \ref{sec:laxshadow}, we study linearization maps from the perspective of lax shadow functors, including a new linearization map from $\TR$ to algebraic $tr$. In Section \ref{sec:trace}, we show that $\THH_{C_n}$ receives a trace map from the fixed points of equivariant algebraic K-theory, and that this trace map factors through $\TR_{C_n}$.

\subsection{Notation and Conventions}

The indexing universe for equivariant spectra is always understood to be complete.

\subsection{Acknowledgements} 
This research grew out of a Women in Topology III project. We are grateful to the organizers of the Women in Topology III workshop, as well as to the Hausdorff Research Institute for
Mathematics, where much of this research was carried out. We are grateful to Gabe Angelini-Knoll, Asaf Horev, Cary Malkiewich, Mona Merling, Kate Ponto, and Inna Zakharevich for helpful conversations and for sharing drafts of their work. In addition, we thank the authors of \cite{CLMPZ} for sharing their proof of Proposition \ref{prop-TR-shadow}. We thank Emanuele Dotto for pointing out an error in a previous draft. We also thank the anonymous referees for helpful comments. The second author was supported by National Science Foundation grants DMS-1810575, DMS-2104233, and DMS-2052042. The fifth author was  supported by National Science
Foundation grant DMS-1926686. Some of this work was done while the second author was in residence at the Mathematical Sciences Research Institute in Berkeley, CA (supported by the National Science Foundation under grant DMS-1440140) during the Spring 2020 semester. The Women in Topology III workshop was supported by NSF
grant DMS-1901795, the AWM ADVANCE grant NSF HRD-1500481, and Foundation Compositio
Mathematica.

\section{Background}\label{sec:background}
\subsection{Bicategorical preliminaries}

Recall that a \emph{bicategory} $\sB$ consists of a class $\sB_0$ of \emph{0-cells} (also called \emph{objects}), together with a category $\sB(A,B)$ for every pair $A,B$ of 0-cells.  The objects and morphisms of the category $\sB (A,B)$ are called \emph{$1$-cells} and \emph{$2$-cells}, respectively.  For every zero cell $A$, there is a distinguished \emph{identity 1-cell}, $U_A$, in $\sB(A,A)$. For every triple $A,B,C$ of 0-cells, there is a \emph{composition functor}
\[ 
-\odot -: \sB (A,B) \times \sB (B,C) \to \sB (A,C),
\]
along with natural isomorphisms 
\begin{align*}
a: (M\odot N) \odot P \xrightarrow{\cong} & M \odot (N \odot P)\\
l: U_A \odot M \xrightarrow{\cong} & M\\
r: M \odot U_B \xrightarrow{\cong} & M
\end{align*}
which satisfy the same coherence axioms as those for a monoidal category.
We refer the reader to \cite[Definition 1.2.1]{leinster:bicat} for a complete definition.

\begin{notation}
We often write $\mab MAB$ to denote a 1-cell $M$ in $\sB(A,B)$.
\end{notation}

Well-known examples of bicategories include the bicategory $\sR$ of (ordinary) rings, in which the 0-cells are rings, while $\sR(A,B)$ is the category of $(A,B)$-bimodules for any pair of rings $A,B$, and the composite of an $(A,B)$-bimodule $M$ and a $(B,C)$-bimodule $N$ is their tensor product over $B$, $M\otimes_B N$.  The bicategory $\sC$ of small categories, functors, and natural transformations is another useful example to keep in mind.

Generalizing simultaneously the notions of a dualizable bimodule and its dual and of an adjoint pair of functors, we have the following definition of dual pairs of 1-cells in a bicategory.

\begin{defn} Let $\sB$ be a bicategory, and let $A,B\in \sB_0$. A pair of 1-cells $(\mab MAB,\mab NBA)$ is a \emph{dual pair for $(A,B)$} if there exist 2-cells $\ve\colon N\odot M \to  U_B$ and  $\eta\colon U_A \to  M\odot N$, called \emph{evaluation} and \emph{coevaluation}, respectively, satisfying the triangle identities, i.e., the composites of 2-cells
\[
N\cong N\odot U_A \xrightarrow{N\odot\eta} N\odot (M\odot N)\cong (N\odot M)\odot N \xrightarrow{\ve\odot N} U_B\odot N \cong N
\]
and
\[
M\cong U_A\odot M \xrightarrow{\eta\odot M} (M\odot N)\odot M\cong M\odot (N\odot M) \xrightarrow{M\odot \ve} M\odot U_B \cong M
\]
are identities.
\end{defn}

The standard notion of equivalence of 0-cells, formulated below in terms of dual pairs, generalizes the notions of both Morita equivalence of rings and equivalence of categories. 

\begin{defn}\label{def:Morita}
Let $\sB$ be a bicategory. A dual pair $(\mab MAB, \mab NBA)$ is a \emph{Morita equivalence} between $A$ and $B$ if $(\mab NBA, \mab MAB)$ is also a dual pair, and the evaluation morphism of each dual pair is an isomorphism, with inverse the coevaluation morphism of the other dual pair. 

Let $A$ and $B$ be 1-cells of $\sB$.  If there exists a Morita equivalence $(\mab MAB, \mab NBA)$, then $A$ and $B$ are \emph{Morita equivalent}.
\end{defn}

\subsection{The shadow framework}

In \cite{ponto:shadow}, Ponto formulated the following definition, which has proved crucial to developing a global understanding of Hochschild homology and its properties, in all its variants and generalizations.

\begin{defn}\label{defn:shadow}  Let $\cat C$ be a category. A \emph{$\cat C$-shadow} on a bicategory $\sB$ consists of a family of functors
\[
\big\{\Sh{A}{-}\colon \sB(A,A) \to \cat C\mid A\in \sB_0\big\}
\]
together with natural isomorphisms
\[
\theta_{A,B}\colon \Sh{A}{M\odot N} \xrightarrow \cong \Sh{B}{N\odot M},
\]
for all 1-cells $\mab MAB$ and $\mab NBA$ and all 0-cells $A$ and $B$, that are appropriately associative and unital. In particular, the following diagrams commute, where $a$ is the associator and $u$ the unitor, and $l$ and $r$ are the left and right units, respectively,  of the bicategory.
\[
\xymatrix{\Sh{}{(M\odot N)\odot P)} \ar[r]^{\theta}\ar[d]_{\Sh{}{a}}& \Sh{}{P\odot (M\odot N))}
 \ar[r]^{\Sh{}{a}} & \Sh{}{(P\odot M)\odot N)}  \\
\Sh{}{M\odot (N\odot P)} \ar[r]^{\theta} & \Sh{}{(( N\odot P)\odot M}  \ar[r]^{\Sh{}{a}} & \Sh{}{ N\odot (P\odot M)} \ar[u]_{\theta}
},
\]
\[
\xymatrix{\Sh{}{M\odot U_A} \ar[r]^{\theta} \ar[dr]_-{\Sh{}{r}} & \Sh{}{U_A \odot M} \ar[r]^{\theta} \ar[d]^{\Sh{}{l}} & \Sh{}{M \odot U_A} \ar[dl]^{\Sh{}{r}} \\
& \Sh{}{M} &
}.
\]

\end{defn}

When the source of a shadow component $\Sh{A}{-}\colon \sB(A,A) \to \cat C$ is clear from context, we usually suppress it from the notation.

Let $\cat{SH}$ denote the stable homotopy category. Campbell and Ponto proved in \cite{campbell-ponto} that topological Hochschild homology is an $\cat{SH}$-shadow on the bicategory $\sR_{\cat{Sp}}$ whose objects are structured ring spectra (i.e., monoids in some nice monoidal model category of spectra, $\cat{Sp}$) and whose morphism categories are the homotopy categories of categories of spectral bimodules.

Shadows satisfy several interesting and useful properties that follow essentially from their bicategorical nature, in particular related to dual pairs of 1-cells. We recall here from \cite{ponto:shadow} those that are most relevant for our work.

\begin{defn} \cite{ponto:shadow}. Let $\sB$ be a bicategory equipped with a $\cat{C}$-shadow $\Sh{}{-}$, and let $(\mab MAB,\mab NBA)$ be a dual pair.  Let $\mab PBB$ and $\mab QAA$ be 1-cells.

The \emph{trace} of a 2-cell $f\colon Q\odot M \to  M\odot P$ with respect to the $\cat C$-shadow $\Sh{}{-}$ is the morphism $\tr f\in \cat C\big (\Sh{} Q, \Sh {} P\big)$ given by the composite
\[
\Sh{}{Q} \xrightarrow{\Sh{}{Q\odot \eta}} \Sh {}{Q\odot M\odot N}\xrightarrow{\Sh {}{f\odot N}} \Sh {}{M\odot P\odot N}\cong \Sh {}{N\odot M\odot P} \xrightarrow{\Sh {}{\ve\odot P}}  \Sh {}P
\]
where we have suppressed unitor and associator isomorphisms.

The trace of a 2-cell $g\colon N\odot Q\to  P\odot N$, which is also a  morphism $\tr g\in \cat C \big(\Sh{} Q, \Sh{} P\big)$, is defined analogously.  
\end{defn}

We distinguish the following important special case of the trace.
\begin{defn}
Let $\sB$ be a bicategory equipped with a $\cat C$-shadow $\Sh{}{-}$, and let $(\mab MAB,\mab NBA)$ be a dual pair.

The \emph{Euler characteristic} of $M$, denoted $\chi(M)$, is the trace of $\Id _M$, the identity 2-cell on $M$, i.e., $\chi(M)$ is the composite
\[
\Sh{}{U_A} \xrightarrow{\Sh{}{\eta}}\Sh{}{M\odot N}\cong \Sh{}{N\odot M}\xrightarrow {\Sh{}{\ve}}\Sh{}{U_B}.
\]
\end{defn}

Traces behave particularly well in the context of Morita equivalences.  For any dualizable pair $(\mab MAB, \mab NBA)$ and 1-cells $\mab QAA$ and $\mab PBB$, consider the 2-cells
\[
\eta\odot Q\odot M\colon Q\odot M \to M\odot N\odot Q\odot M
\]
and
\[
M\odot P\odot \ve\colon M\odot P\odot N\odot M \to M\odot P,
\]
where we are suppressing unitors and associators.

\begin{prop}\label{prop:morita}\cite{campbell-ponto} Let $\sB$ be a bicategory equipped with a $\cat C$-shadow.  Let $(\mab MAB, \mab NBA)$ be a Morita equivalence.

For any 1-cell $\mab QAA$, 
\[
\tr {\eta\odot Q\odot M}\colon \Sh{}Q \to \Sh{}{N\odot Q\odot M}
\]
is an isomorphism with inverse
\[
\tr {N\odot Q\odot \ve}\colon \Sh{}{N\odot Q\odot M}\to \Sh {}{Q}.
\]
In particular, the Euler characteristic $\chi (M)\colon \Sh{}{U_A} \to \Sh{}{U_B}$ is an isomorphism.
\end{prop}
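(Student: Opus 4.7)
The plan is to compute both traces directly from the definition, simplify each to a very short expression using the triangle identities and naturality of the cycling isomorphism $\theta$, and then observe that the simplified forms are mutually inverse. Throughout, I will write $\eta, \varepsilon$ for the coevaluation and evaluation of the dual pair $(M, N)$, and note that the Morita hypothesis supplies inverses $\eta^{-1}\colon M\odot N\to U_A$ and $\varepsilon^{-1}\colon U_B\to N\odot M$ which serve as the evaluation and coevaluation of the dual pair $(N, M)$. The ``$\varepsilon$'' appearing in $\tr{N\odot Q\odot \varepsilon}$ therefore denotes $\eta^{-1}$, so the second trace is to be computed with respect to the dual pair $(N, M)$.

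For the first trace, I would spell out $\tr{\eta\odot Q\odot M}$ as the five-term composite dictated by the trace definition. Applying naturality of $\theta$ to the 2-cell $\eta\odot Q\odot M$ lets me commute its second arrow past $\theta$, so that the middle three arrows combine into $\Sh{}{\bigl((\varepsilon\odot N)\circ(N\odot \eta)\bigr)\odot Q\odot M}$ precomposed with a single cycling. The triangle identity $(\varepsilon\odot N)\circ(N\odot \eta) = \id_N$ for $(M, N)$ then collapses this to just $\theta\colon \Sh{}{Q\odot M\odot N}\xrightarrow{\cong} \Sh{}{N\odot Q\odot M}$, yielding
\[
\tr{\eta\odot Q\odot M} = \theta\circ \Sh{}{Q\odot \eta}.
\]
An entirely analogous argument applies to $\tr{N\odot Q\odot \varepsilon}$: the first two arrows of its expansion compose to $\Sh{}{N\odot Q\odot \bigl((\eta^{-1}\odot M)\circ(M\odot \varepsilon^{-1})\bigr)}$, which is the identity by the triangle identity for $(N, M)$; after using naturality of $\theta$ to rewrite $\Sh{}{\eta^{-1}\odot Q}$ in the form $\Sh{}{Q\odot \eta^{-1}}$ preceded by a cycling, and invoking the shadow associativity axiom to identify the resulting product of two cyclings with the single cycling inverse to the one appearing above, I obtain
\[
\tr{N\odot Q\odot \varepsilon} = \Sh{}{Q\odot \eta^{-1}}\circ \theta^{-1}.
\]

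Composing the two simplified forms,
\[
\tr{N\odot Q\odot \varepsilon}\circ \tr{\eta\odot Q\odot M} = \Sh{}{Q\odot \eta^{-1}}\circ \theta^{-1}\circ \theta\circ \Sh{}{Q\odot \eta} = \Sh{}{Q\odot (\eta^{-1}\circ \eta)} = \id_{\Sh{}{Q}},
\]
and the reverse composite is handled identically. For the final assertion, I would specialize to $Q = U_A$: the isomorphism $\tr{\eta\odot U_A\odot M}\colon \Sh{}{U_A}\xrightarrow{\cong} \Sh{}{N\odot M}$ just established, composed with the (invertible, in the Morita setting) map $\Sh{}{\varepsilon}\colon \Sh{}{N\odot M}\xrightarrow{\cong} \Sh{}{U_B}$, recovers $\chi(M)$ by its defining formula, so $\chi(M)$ is an isomorphism. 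The main obstacle I anticipate is the coherence bookkeeping: correctly identifying the cycling that appears in the second simplified trace as the inverse of the cycling in the first, and keeping track of the various associator and unitor isomorphisms suppressed throughout the trace formulas; all of these reduce, in the end, to careful applications of the shadow associativity and unit axioms.
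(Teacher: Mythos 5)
Your argument is essentially correct, and it is genuinely more self-contained than what the paper does: the paper gives no proof of this proposition at all, citing \cite{campbell-ponto}, and only remarks that the Euler characteristic statement follows because $\chi(N)$ is inverse to $\chi(M)$ by the definition of Morita equivalence. Your direct computation is the right one: expanding $\tr{\eta\odot Q\odot M}$, using naturality of $\theta$ in the first variable and the triangle identity $(\ve\odot N)\circ(N\odot\eta)=\id_N$ does collapse it to $\theta\circ\Sh{}{Q\odot\eta}$, and your reading of the second trace is the only one that typechecks --- it must be the trace, with respect to the dual pair $(\mab NBA,\mab MAB)$, of the 2-cell built from that pair's evaluation $M\odot N\to U_A$, which by the Morita condition is $\eta^{-1}$ (with coevaluation $\ve^{-1}$); the triangle identity for $(N,M)$ with these maps then kills its first two arrows exactly as you say. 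Your final step for $\chi(M)$ (postcompose the case $Q=U_A$ with the invertible $\Sh{}{\ve}$) is fine, though the paper's one-line route via $\chi(N)$ is even shorter.

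One point deserves to be made explicit, because "the shadow associativity axiom" alone does not deliver it: your identification of $\theta^{-1}_{Q,M\odot N}\circ\theta_{N\odot Q,M}$ with $\theta^{-1}_{Q\odot M,N}$ (equivalently, the cancellation of the two cyclings in the composite of the two traces) uses that $\theta$ is involutive, i.e.\ $\theta_{Y,X}\circ\theta_{X,Y}=\id$. This is not one of the stated axioms, but it does follow from them: the second (unit) diagram in Definition \ref{defn:shadow} gives $\theta_{U_A,X}\circ\theta_{X,U_A}=\Sh{}{r}^{-1}\Sh{}{l}\,\Sh{}{l}^{-1}\Sh{}{r}=\id$, and instantiating the hexagon with one of the three 1-cells equal to a unit, together with naturality of $\theta$ applied to the unitors, bootstraps this to general $X,Y$. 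With that lemma in hand (and the routine associator/unitor bookkeeping you already flag), your computation
\[
\tr{N\odot Q\odot \ve}\circ\tr{\eta\odot Q\odot M}=\Sh{}{Q\odot(\eta^{-1}\circ\eta)}=\id_{\Sh{}{Q}},
\]
and its companion in the other order, goes through as written.
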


Note that the last statement of the proposition above follows from the fact that the Euler characteristic $\chi(N)$ of the right member $N$ of the dual pair $(M,N)$ provides an inverse to $\chi(M)$, by the definition of Morita equivalence.


\section{Hochschild shadows}\label{sec:shadow}

Hochschild homology theories form an important family of shadows. Classical Hochschild homology extends to a shadow (see Example 6.4 of \cite{PS}.) In Theorem 2.17 of \cite{campbell-ponto}, Campbell and Ponto proved that topological Hochschild homology of ring spectra and of spectral categories does so as well. In this section, we generalize these results to any nice enough symmetric monoidal, simplicial model category.

Throughout this section, let $(\cat V, \otimes, S)$ denote a symmetric monoidal, simplicial model category, i.e., a simplicial model category equipped with a symmetric monoidal structure with respect to which it is a also a monoidal model category. In particular, we assume that $\cat V$ is closed monoidal.

\subsection{The one-object case}\label{subsec:oneobj}
 Here we construct a Hochschild-type invariant of monoids in a symmetric monoidal, simplicial model category $\cat V$, and show that it extends to a shadow, from which we deduce Morita invariance of this generalized Hochschild homology.

\begin{assumption}\label{assumption:oneobj}
 We assume henceforth that  $(\cat V, \otimes, S)$ is cofibrantly generated and has functorial fibrant replacements and that for any pair of monoids $A$ and $B$ in $\cat V$, the category $\mab {\cat {Mod}}AB$ of $(A,B)$-bimodules in $\cat V$ admits a model structure right-induced from that of $\cat V$, i.e., such that the weak equivalences and fibrations are created in $\cat V$. This hypothesis holds if, for example, every object in $\cat V$ is small relative to $\cat V$, and $\cat V$ satisfies the monoid axiom \cite [Theorem 4.1]{schwede-shipley}.
\end{assumption}

Since $\cat V$ is a simplicial model category, there is a \emph{homotopy colimit} functor
\[
\hocolim_{\Delta^{\mathrm{op}}}\colon \cat V^{\Delta^{\mathrm{op}}} \to \cat V;
\]
see \cite[Definition 18.1.2]{hirschhorn}.

\begin{defn} \label{def:bicat-V}
Let $\sR_{\cat V}$ denote the bicategory defined as follows.
\begin{itemize}
    \item The 0-cells of $\sR_{\cat V}$ are monoids in $\cat V$ whose underlying object in $\cat V$ is cofibrant.
    \item For any monoids $A$ and $B$ in $\cat V$, the category $\sR_\cat V(A,B)$ is $\Ho (\mab {\cat {Mod}}AB)$, the homotopy category of the category of $(A,B)$-bimodules in $\cat V$, which we view explicitly as the category with bifibrant objects of $\mab {\cat {Mod}}AB$ as objects and homotopy classes of morphisms in $\mab {\cat {Mod}}AB$ as morphisms.
    \item Given objects $\mab MAB$ in $\Ho (\mab {\cat {Mod}}AB)$ and $\mab NBC$ in $\Ho (\mab {\cat {Mod}}BC)$, their composite $M\odot N$ is defined to be their derived tensor product over $B$.  More explicitly, we set
    \[
    M\odot N = (\hocolim_{\Delta^{\mathrm{op}}} \Bar(M;B;N))^f,
    \]
    the fibrant replacement of the homotopy colimit of the bar construction $\Bar(M;B;N)$.
\end{itemize}
\end{defn}

\begin{notation}
  Henceforth, if $X_\bullet$ is a simplicial object in a simplicial model category $\cat M$, then we abuse notation and terminology somewhat and let $|X_\bullet|$ denote $(\hocolim_{\Delta^{\mathrm{op}}} X_\bullet)^f$, which we refer to as the geometric realization of $X_\bullet$.  If all simplicial objects in $\cat M$ are Reedy cofibrant (e.g., if $\cat M$ is a topos, such as $\mathsf{sSet}$, or an additive category \cite{rezk:MO}), then  $(\hocolim_{\Delta^{\mathrm{op}}} X_\bullet)^f$ is weakly equivalent to the usual geometric realization. 
\end{notation}
  
It is not hard, if a bit tedious, to check that $\sR_{\cat V}$ is indeed a bicategory. In particular, for any 0-cell $A$ in $\sR_{\cat V}$, the unit object $U_A$ in $\sR_{\cat V}(A,A)$ is just $A$ seen as a bimodule over itself in the canonical way. Note that Assumption \ref{assumption:oneobj} and the cofibrancy condition on the 0-cells together ensure that horizontal composition is well defined by \cite[Definition 18.5.3]{hirschhorn}.

\begin{defn}
Let $A$ be a monoid in $\cat V$, and let $M$ be an $A$-bimodule. 
The \emph{Hochschild construction} on $A$ with coefficients in $M$, denoted $\HH_{\cat V} (A;M)$, is $|B_{\bullet}^{\mathrm {cyc}}(A;M)|$, the geometric realization of the cyclic bar construction 
(see, e.g.  \cite[Section 2]{BHM}) on $A$ with coefficients in $M$.
\end{defn}

\begin{prop}
\label{prop:HH_is_shadow}
The Hochschild construction $\HH_{\cat V}$ extends to a $\Ho \cat V$-shadow on $\sR_{\cat V}$.
\end{prop}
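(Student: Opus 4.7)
The plan is to follow the template of \cite{campbell-ponto}, adapting their argument for spectral categories to the general simplicial symmetric monoidal setting of $\cat V$. First I would check that, for any cofibrant monoid $A$, the cyclic bar construction $B^{\mathrm{cyc}}_\bullet(A;-)\colon \mab{\cat{Mod}}AA \to \cat V^{\Delta^{\op}}$ preserves weak equivalences between bifibrant bimodules (using that the underlying object of $A$ is cofibrant, together with Assumption \ref{assumption:oneobj} and the pushout-product axiom), and hence descends, after homotopy realization and fibrant replacement, to a functor $\Sh{A}{-}=\HH_{\cat V}(A;-)\colon \sR_{\cat V}(A,A) \to \Ho\cat V$. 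Functoriality in the bimodule variable is then immediate.

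The heart of the proof is the construction of the natural isomorphism $\theta_{A,B}\colon \HH_{\cat V}(A;M\odot N) \xrightarrow{\cong} \HH_{\cat V}(B;N\odot M)$ for $\mab MAB$ and $\mab NBA$. To build this, I would introduce the bisimplicial object $X_{\bullet,\bullet}$ in $\cat V$ with
\[
X_{p,q} = M\otimes B^{\otimes p}\otimes N \otimes A^{\otimes q},
\]
equipped with the face/degeneracy maps coming from the two-sided bar construction in the $p$-direction and the cyclic bar construction in the $q$-direction. Its realization can be computed in either order: realizing first in $p$ gives $B^{\mathrm{cyc}}_\bullet(A;M\odot N)$ (up to weak equivalence, since $M\odot N=|B(M;B;N)|$ as a bimodule), and realizing first in $q$ gives $B^{\mathrm{cyc}}_\bullet(B;N\odot M)$ after a cyclic rotation that moves the $N\otimes A^{\otimes q}$ block to the front. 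By the Fubini-type result that realization in a simplicial model category commutes with realization of bisimplicial objects \cite[\S 18.5]{hirschhorn}, together with functorial fibrant replacement, one obtains the desired isomorphism in $\Ho\cat V$. Naturality in $M$ and $N$ is built into the construction, and applying the cyclic rotation once more gives the identity on $\HH_{\cat V}(A;M\odot N)$ when $B=A$, $N=U_A$, as needed.

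Next I would verify the coherence axioms of Definition \ref{defn:shadow}. For the hexagon, given $\mab MAB$, $\mab NBC$, $\mab PCA$, I would model all six terms as realizations of a common trisimplicial object $Y_{\bullet,\bullet,\bullet}$ with
\[
Y_{p,q,r}=M\otimes B^{\otimes p}\otimes N\otimes C^{\otimes q}\otimes P\otimes A^{\otimes r},
\]
so that the three double-rotations $(M,N,P)\mapsto(N,P,M)\mapsto(P,M,N)$ correspond to literal cyclic permutations of the three blocks and the associator corresponds to a reassociation of the bar-construction direction; the hexagon then commutes on the nose before passage to $\Ho\cat V$. The unit triangle reduces to the observation that both $\theta$-maps entering the diagram are inverse cyclic shifts around the single $U_A=A$ factor.

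The step I expect to be the main obstacle is making the bisimplicial/cyclic rearrangement homotopically rigorous, because $M\odot N$ is defined as a fibrant replacement of a homotopy colimit rather than a strict tensor product, so one cannot just write down a point-set cyclic rotation on $B^{\mathrm{cyc}}_\bullet(A;M\odot N)$. The fix is to keep everything at the level of (Reedy cofibrant replacements of) bisimplicial objects, use that taking $|-|$ commutes with diagonals up to weak equivalence, and only pass to $\Ho \cat V$ at the end; the cofibrancy hypothesis on the monoids and bimodules ensures that all relevant objects are pointwise cofibrant, so the rotation of factors is a genuine weak equivalence after realization. Once this is in place, the remaining verifications are bookkeeping, and Morita invariance (Corollary \ref{cor:morita}) follows from Proposition \ref{prop:morita}.
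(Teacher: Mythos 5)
Your proposal is correct and follows essentially the same route as the paper: the natural isomorphism $\theta_{A,B}$ is obtained from the Dennis--Morita--Waldhausen argument by identifying both sides, up to weak equivalence (using cofibrancy), with realizations of the same bisimplicial object $M\otimes B^{\otimes p}\otimes N\otimes A^{\otimes q}$ up to a rotation of factors, and the coherence axioms reduce to the associativity and unit coherence of the symmetric monoidal structure on $\cat V$ (which you spell out via a trisimplicial model, where the paper simply asserts it). The extra care you take about $M\odot N$ being a fibrant replacement of a homotopy colimit is exactly the point the paper handles with the phrase ``since all objects are cofibrant,'' so no substantive difference remains.
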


\begin{proof}
The Hochschild construction clearly defines a functor 
\[
\HH_{\cat V} (A;-)\colon \Ho (\mab{\cat {Mod}(\cat V)}AA) \to \Ho \cat V
\]
for every $A$. Moreover, a slight generalization of the Dennis--Morita--Waldhausen argument in \cite[Proposition 6.2]{blumberg-mandell:localization} shows that there is a natural isomorphism
\[
\theta_{A,B}:\HH_{\cat V} (A;M\odot N) \cong \HH_{\cat V} (B;N\odot M)
\]
for all $\mab MAB$ and $\mab NBA$.  Indeed, since all objects are cofibrant, the left-hand side is weakly equivalent to 
$$\hocolim_{\Delta^{\mathrm{op}}\times\Delta^{\mathrm{op}} } B_{\bullet}^{\mathrm {cyc}}\big(A; \Bar(M;B;N)\big)$$
while the right-hand side is weakly equivalent to 
$$\hocolim_{\Delta^{\mathrm{op}}\times\Delta^{\mathrm{op}} } B_{\bullet}^{\mathrm {cyc}}\big(B; \Bar(N;A;M)\big).$$
Since the two bisimplicial objects of which we take the homotopy colimit are in fact isomorphic, we can conclude. 

It is straightforward to check the remaining shadow conditions.  The required relations between the associator and $\theta$ and between the unitor and $\theta$ follow immediately from associativity and unit coherence for the symmetric monoidal structure on $\cat V$.  
\end{proof}

\begin{rmk}\label{rmk-HH-shadow}
Taking $\cat V$ to be the category of simplicial abelian groups, we recover the result that Hochschild homology is a shadow on the bicategory $\sR_{s\cat{Ab}}$ of simplicial rings and their bimodules.
\end{rmk}

Together with Proposition \ref{prop:morita}, the proposition above implies that the following version of Morita invariance holds.

\begin{cor}\label{cor:morita} Let $(\mab MAB, \mab NBA)$ be a Morita equivalence in $\sR_{\cat V}$.  For any $A$-bimodule $Q$, there is an isomorphism
\[
\HH_{\cat V}(A;Q) \xrightarrow{\cong} \HH _{\cat V}(B;N\odot Q\odot M).
\]
In particular, the Euler characteristic
\[
\chi(M)\colon \HH_{\cat V}(A;A) \to \HH_{\cat V}(B;B)
\]
is an isomorphism.
\end{cor}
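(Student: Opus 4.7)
The plan is to derive this corollary as a direct and essentially mechanical consequence of the preceding two results, without needing any new computation. Having just shown in Proposition \ref{prop:HH_is_shadow} that $\HH_{\cat V}$ extends to a $\Ho \cat V$-shadow on $\sR_{\cat V}$, I can feed this shadow into the general bicategorical machinery recorded in Proposition \ref{prop:morita}, which is precisely the statement that traces along a Morita equivalence produce isomorphisms of shadow values.

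Concretely, I would proceed as follows. Given the Morita equivalence $(\mab MAB,\mab NBA)$ with evaluation $\ve$ and coevaluation $\eta$, and given the $A$-bimodule $Q$ viewed as a 1-cell $\mab QAA$, Proposition \ref{prop:morita} says that the trace of the 2-cell
\[
\eta \odot Q \odot M \colon Q \odot M \to M \odot N \odot Q \odot M
\]
furnishes an isomorphism $\Sh{A}{Q} \xrightarrow{\cong} \Sh{B}{N \odot Q \odot M}$, with inverse given by the trace of $N \odot Q \odot \ve$. Unpacking the notation, we have $\Sh{A}{Q} = \HH_{\cat V}(A;Q)$ and $\Sh{B}{N \odot Q \odot M} = \HH_{\cat V}(B;N \odot Q \odot M)$, which delivers the first displayed isomorphism of the corollary.

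For the second statement, I would specialize to $Q = U_A$, which in $\sR_{\cat V}$ is $A$ regarded as a bimodule over itself via multiplication. Under this specialization the 2-cell $\eta \odot Q \odot M$ becomes (up to unitors) the coevaluation itself, and the resulting trace is, by definition, the Euler characteristic $\chi(M) \colon \Sh{A}{U_A} \to \Sh{B}{U_B}$. Rewriting this as $\HH_{\cat V}(A;A) \to \HH_{\cat V}(B;B)$ completes the argument, with the inverse being $\chi(N)$.

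Since the entire proof is an invocation of Proposition \ref{prop:morita}, there is no real obstacle; the only care required is bookkeeping, namely confirming that the shadow of Proposition \ref{prop:HH_is_shadow} has the stated value on $U_A$ (which holds because $U_A = A$ as an $A$-bimodule, so $\Sh{A}{U_A} = \HH_{\cat V}(A;A)$) and that the trace in this setting agrees with the explicit cyclic-bar presentation of the isomorphism $\theta_{A,B}$ built in the proof of Proposition \ref{prop:HH_is_shadow}.
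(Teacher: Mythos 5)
Your proposal is correct and follows essentially the same route as the paper, which states the corollary as an immediate consequence of feeding the shadow of Proposition~\ref{prop:HH_is_shadow} into Proposition~\ref{prop:morita}, with no further argument needed. The only slight imprecision is in your treatment of the Euler characteristic: the trace of $\eta\odot U_A\odot M$ lands in $\Sh{B}{N\odot U_A\odot M}$ rather than literally in $\Sh{B}{U_B}$, so rather than re-identifying that specialized trace with $\chi(M)$ it is cleaner (and what the paper does) to invoke the ``in particular'' clause of Proposition~\ref{prop:morita} directly, where $\chi(M)$ is inverted by $\chi(N)$ because the evaluation and coevaluation of a Morita equivalence are mutually inverse isomorphisms.
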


\subsection{The many-object case}\label{subsec:manyobj}

The constructions above can be generalized in a natural manner to ``monoids in $\cat V$ with many objects", i.e., small $\cat V$-categories, which are the objects of a category that we denote $\cat {VCat}$. The ``many objects" version of bimodules is formulated in terms of enriched functors and enriched natural transformations.  We begin by fixing notation and recalling a few key constructions from enriched category theory, for which we recommend the references \cite{kelly} and \cite{riehl}.

For any $\cat V$-category $\cat A$ and any $a,a'\in \Ob \cat A$, let $\cat A(a,a')$ denote the $\cat V$-object of morphisms from $a$ to $a'$.  Given $\cat V$-categories $\cat A$ and $\cat B$, let $\cat {VFun}(\cat A, \cat B)$ denote the (ordinary) category of $\cat V$-functors from $\cat A$  to $\cat B$ and of $\cat V$-natural transformations between them. Let $\cat S$ denote the  $\cat V$-category with one object $*$ and such that $\cat S(*,*)=S$, the unit object in $\cat V$.  

Given $\cat V$-categories $\cat A$, $\cat B$, and $\cat C$, and a $\cat V$-functor $F: \cat A \to \cat B$, there is an induced adjunction
\[
\adjunction{\cat {VFun}(\cat A, \cat C)}{\cat {VFun}(\cat B, \cat C)}{F_!}{F^*},
\]
where $F^*$ is given by precomposition with $F$, and $F_!$ is enriched left Kan extension. Explicitly, for all $\cat V$-functors $G:\cat A \to \cat C$ and all $b\in \Ob \cat B$, the enriched left Kan extension of $G$ along $F$ evaluated at $b$ can be computed as an enriched coend \cite[7.6.7]{riehl}
$$F_!(G)(b)=\int ^{\cat A}\cat B\big( F(a), b\big)\otimes G(a).$$

\begin{defn}
Let $\cat A$ and $\cat B$ be $\cat V$-categories.  An \emph{$(\cat A,\cat B)$-bimodule} is a $\cat V$-functor 
\[
M\colon \cat A^{\mathrm{op}} \otimes \cat B \to \cat V,
\]
where $\cat A^{\mathrm{op}} \otimes \cat B$ denotes the $\cat V$-category with class of objects $\Ob \cat A \times \Ob \cat B$, where we let $a\otimes b$ denote the pair $(a,b)\in\Ob \cat A \times \Ob \cat B$, to avoid conflicting notation.  The morphism $\cat V$-objects of $\cat A^{\mathrm{op}} \otimes \cat B$ are specified by 
\[
\cat A^{\mathrm{op}} \otimes \cat B\big( a\otimes b, a'\otimes b'\big) =\cat A(a',a)\otimes B(b,b').
\]
A \emph{morphism of $(\cat A,\cat B)$-bimodules} is a $\cat V$-natural transformation between the corresponding $\cat V$-functors.
\end{defn}

In other words, the category $\mab {\cat {Mod}}{\cat A}{\cat B}$ of $(\cat A, \cat B)$-bimodules is exactly $\cat{VFun}(\cat A^{\mathrm{op}}\otimes \cat B, \cat V)$. In particular, $\mab {\cat {Mod}}{\cat S}{\cat S}$ is isomorphic as a $\cat V$-category to $\cat V$.

\begin{exmp}\label{exmp:bimodule}
Let $\cat A$, $\cat B$, and $\cat C$ be $\cat V$-categories. Every pair of $\cat V$-functors, $F\colon \cat A\to \cat C$ and $G\colon \cat B \to \cat C$,  gives rise to an $(\cat A,\cat B)$-bimodule
\[
\mab {\cat C}FG\colon \cat A^{\mathrm{op}} \otimes \cat B \to \cat V : a\otimes b \mapsto \cat C\big( F(a), G(b)\big).
\]
An important special case of this construction is 
\[
\widehat{\cat A}=\mab {\cat A}\Id\Id\colon \cat A^{\mathrm{op}} \otimes \cat A \to \cat V : a\otimes a' \mapsto \cat A(a, a'),
\]
the canonical $\cat A$-bimodule structure on $\cat A$.
\end{exmp}

\begin{rmk}
We can also use enriched coends to define a many-objects version of tensoring two bimodules over a common coefficient base.  Let $\cat A$, $\cat B$, and $\cat C$ be $\cat V$-categories.  There is a $\cat V$-functor
$$-\otimes _{\cat B}-:\mab {\cat {Mod}}{\cat A}{\cat B}\otimes \mab {\cat {Mod}}{\cat B}{\cat C}\to \mab {\cat {Mod}}{\cat A}{\cat C}$$
specified on objects by 
$$M\otimes _{\cat B}N:\cat A^{\mathrm{op}} \otimes \cat C \to \cat V: a\otimes c \mapsto \int ^{\cat B} M(a\otimes -)\otimes N(- \otimes c)$$
for all $(\cat A, \cat B)$-bimodules $M$ and $(\cat B, \cat C)$-bimodules $N$.
\end{rmk}

There are also ``many objects" versions of the bar construction and cyclic bar construction (see, e.g.,  \cite{Mit72} for additive categories and \cite{blumberg-mandell:localization} for spectral categories), constructed from the following building blocks.  Let $\cat A$ and $\cat B$ be $\cat V$-categories, and let $M$ be an $(\cat A, \cat B)$-bimodule.  Let $\widehat {\cat A} \boxtimes M$ and $M\boxtimes \widehat{\cat B}$ denote the $(\cat A, \cat B)$-bimodules specified on objects by
$$(\widehat {\cat A} \boxtimes M)(a\otimes b)=\coprod_{a'\in \Ob \cat A}\widehat {\cat A}(a\otimes a')\otimes M(a'\otimes b)=\coprod_{a'\in \Ob \cat A}\cat A(a,a')\otimes M(a'\otimes b)$$
and
$$(M\boxtimes \widehat {\cat B})(a\otimes b)=\coprod_{b'\in \Ob \cat B}M(a\otimes b')\otimes \widehat {\cat B}(b'\otimes b)=\coprod_{b'\in \Ob \cat B}M(a\otimes b')\otimes \cat B(b', b).$$

There are $\cat V$-natural transformations
$$\lambda: \widehat{\cat A} \boxtimes M\to M \quad\text{ and }\quad \rho: M\boxtimes \widehat{\cat B} \to M$$
with components given by the following composites
$$\coprod_{a'\in \Ob \cat A}\cat A(a,a')\otimes M(a'\otimes b) \to \coprod_{a'\in \Ob \cat A}{\cat V}\big(M(a'\otimes b), M(a\otimes b)\big)\otimes M(a'\otimes b)\to M(a\otimes b)$$
and
$$\coprod_{b'\in \Ob \cat B}M(a\otimes b')\otimes \cat B(b', b) \to \coprod_{b'\in \Ob \cat B}M(a\otimes b')\otimes \cat V\big(M(a\otimes b'), M(a\otimes b)\big)\to M(a\otimes b),$$
where the first morphism in the composite uses the $\cat V$-functor structure of $M (-\otimes b)$, which is contravariant, and of $M(a\otimes -)$, respectively, and the second is built from ``evaluation" morphisms.  In a slight abuse of language, we call $\lambda$ and $\rho$ the \emph{left action of $\cat A$} and the \emph{right action of $\cat B$} on $M$.

\begin{defn}\label{defn:bar-multiobj}
Let $\cat A$, $\cat B$, and $\cat C$ be $\cat V$-categories. Let $M\in \Ob \mab {\cat {Mod}}{\cat A}{\cat B}$ and $N\in \Ob \mab {\cat {Mod}}{\cat B}{\cat C}$.  The \emph{simplicial bar construction} on $\cat B$ with coefficients in $M$ and $N$ is the simplicial $(\cat A, \cat C)$-bimodule $\Bar(M;\widehat{\cat B};N)$ specified by 
$$\operatorname{Bar}_n (M;\widehat{\cat B};N)= M \boxtimes \widehat{\cat B}^{\boxtimes n}\boxtimes N,$$
with faces given by the right action of $\cat B$ on $M$, composition in $\cat B$, and the left action of $\cat B$ on $N$.  Degeneracies are given by the unit of $\cat B$.

Similarly, for $P\in \Ob \mab {\cat {Mod}}{\cat A}{\cat A}$, the \emph{cyclic bar construction} on $\cat A$ with coefficients in $P$ is the cyclic object $B_\bullet^{\mathrm {cyc}}(\widehat{\cat A};P)$ in
$\mab {\cat {Mod}}{\cat S}{\cat S}$ (which can therefore also be seen as a cyclic object in $\cat V$)
that is specified by
$$B_n^{\mathrm {cyc}} (\cat A;P) = P\boxtimes \widehat{\cat A}^{\boxtimes n},$$
with faces given by the right action of $\cat A$ on $P$, composition in $\cat A$, and the left action of $\cat A$ on $P$ together with a cyclic permutation of the factors.  Degeneracies are given by the unit of $\cat A$.
\end{defn}

\begin{rmk}
The cyclic bar construction is a simplicial object in $\mab {\cat {Mod}}{\cat S}{\cat S}$ rather than $\mab {\cat {Mod}}{\cat A}{\cat A}$ because the cyclic permutation applied in the last face at each level is in general not compatible with the $\cat A$-action on either side.
\end{rmk}

\begin{assumption}\label{assumption:manyobj}
We assume henceforth that the unit $S$  of $\cat V$ is cofibrant and that for any pair of $\cat V$-categories $\cat A$ and $\cat B$, the category $\mab {\cat {Mod}}{\cat A}{\cat B}$ of $(\cat A,\cat B)$-bimodules in $\cat {VCat}$ admits a simplicial model structure in which the weak equivalences and fibrations are determined objectwise.  This hypothesis holds if, for example, $\cat V$ is a \emph{locally presentable base} \cite[Definition 2.1]{moser}, and tensoring with $\cat A^{\mathrm{op}}\otimes \cat B\big( a\otimes b, a'\otimes b'\big)$ preserves acyclic cofibrations in $\cat V$ for every pair of $\cat V$-categories $\cat A$ and $\cat B$. These conditions guarantee the existence of a projective model structure on the category of $(\cat A, \cat B)$-bimodules \cite[Remark 4.5]{moser}.   Moreover, the proof of \cite[Theorem 11.7.3]{hirschhorn} applies essentially verbatim in this case, to show that objectwise tensoring and cotensoring by simplicial sets endows  $\mab {\cat {Mod}}{\cat A}{\cat B}$ with the structure of a simplicial model category.

Examples of locally presentable bases include the categories of simplicial sets, of symmetric spectra, and of chain complexes over a commutative ring \cite[Examples 5.6, 6.6, 6.7]{moser}.  The condition on preservation of acyclic cofibrations holds if, for example, all objects in $\cat V$ are cofibrant or we consider only those $\cat V$-categories such that the morphism objects are cofibrant in $\cat V$.
\end{assumption}

\begin{defn}\label{defn:CcatV}
Let $\sC_{\cat V}$ denote the bicategory defined as follows.
\begin{itemize}
    \item The 0-cells of $\sC_{\cat V}$ are small $\cat V$-categories such that all hom-objects are cofibrant in $\cat V$.
    \item For any $\cat V$-categories $\cat A$ and $\cat B$, the category $\sC_\cat V(\cat A,\cat B)$ is $\Ho (\mab {\cat {Mod}}{\cat A}{\cat B})$, the homotopy category of the category of $(\cat A,\cat B)$-bimodules, in the explicit form of the category with bifibrant objects of $\mab {\cat {Mod}}{\cat A}{\cat B}$ as objects and homotopy classes of morphisms in $\mab {\cat {Mod}}{\cat A}{\cat B}$ as morphisms.
    \item Given $\mab M{\cat A}{\cat B}$ and $\mab N{\cat B}{\cat C}$, their composite $M\odot N$ is defined to be their derived tensor product over $\cat B$.  More explicitly, we set
    \[
    M\odot N = \hocolim_{\Delta^{\mathrm{op}}} \big(\Bar(M;\widehat{\cat B};N)\big)^f,
    \]
    the functorial fibrant replacement of the homotopy colimit of the simplicial bar construction.
\end{itemize}
\end{defn}

As in the case of $\sR_{\cat V}$, it is straightforward, if tedious, to check that $\sC_{\cat V}$ is indeed a bicategory. Note that for any 0-cell $\cat A$ in $\sC_{\cat V}$, the unit object $U_{\cat A}$ in $\sC_{\cat V}(\cat A,\cat A)$ is $\widehat{\cat A}$. Assumptions \ref{assumption:oneobj} and \ref{assumption:manyobj} and the cofibrancy assumption on the 0-cells of $\sC_{\cat V}$ together imply that horizontal composition is well defined, as in the one-object case.

\begin{rmk}\label{rmk:cofibVcat}
By \cite[Proposition 6.3]{schwede-shipley:equivmonmodcat}, under our hypotheses on $\cat V$, for every $\cat V$-category $\cat C$, there exists a $\cat V$-category $\cat {QC}$ with same object set such that all hom-objects are cofibrant in $\cat V$ and an acyclic fibration of $\cat V$-categories, $\cat {QC} \to \cat C$, that is the identity on objects and a weak equivalence on hom-objects. The cofibrancy hypothesis on the 0-cells of $\sC_{\cat V}$ is therefore not too restrictive. 
\end{rmk}

The following computation of a particular derived tensor product proves useful to us below.

\begin{lem}\label{lem:bar-compute} Let $\cat A$, $\cat B$, and $\cat C$ be $\cat V$-categories.  For every pair of $\cat V$-functors $F:\cat A \to \cat C$ and $G:\cat B \to \cat C$ and every $a\otimes b\in \Ob (\cat A^{\mathrm op} \otimes \cat B)$, there is a natural isomorphism
$$|\Bar(\mab {\cat C}F{\Id};\widehat{\cat C};\mab {\cat C}{\Id}G)|(a\otimes b)\cong \mab {\cat C}FG(a\otimes b)$$
in $\Ho \cat {V}$.
\end{lem}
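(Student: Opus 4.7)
The plan is to show that, after evaluating at a fixed $a\otimes b$, the simplicial bar construction becomes a standard two-sided bar-type resolution which admits an extra degeneracy with augmentation to $\cat C\big(F(a),G(b)\big)$, and then to invoke the general fact that a simplicial object with an extra degeneracy has its augmentation as a weak equivalence on geometric realization.

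Concretely, I would first unpack the definition of $\boxtimes$: at simplicial level $n$, evaluating $\Bar\big(\mab{\cat C}F{\Id};\widehat{\cat C};\mab{\cat C}{\Id}G\big)$ at $a\otimes b$ gives
\[
\coprod_{c_0,\ldots,c_n\in\Ob\cat C} \cat C\big(F(a),c_0\big)\otimes \cat C(c_0,c_1)\otimes\cdots\otimes\cat C(c_{n-1},c_n)\otimes\cat C\big(c_n,G(b)\big),
\]
with face maps given by composition in $\cat C$ and degeneracies by units. Composition of all factors provides an augmentation to $\cat C\big(F(a),G(b)\big)=\mab{\cat C}FG(a\otimes b)$, and inserting $\mathrm{id}_{G(b)}$ at the right (via the unit $S\to \cat C(G(b),G(b))$ on the summand indexed by $c_{n+1}=G(b)$) defines an extra degeneracy $s_{n+1}$ satisfying the standard simplicial identities with respect to this augmentation. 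Thus the evaluated bar construction is a split augmented simplicial object in $\cat V$ over $\cat C\big(F(a),G(b)\big)$.

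Next I would invoke the standard consequence: a split augmented simplicial object admits a simplicial contraction, which after taking the homotopy colimit (or geometric realization) yields a weak equivalence from the realization to the augmentation. For this step to be clean in the simplicial model category $\cat V$, I would use that under our cofibrancy assumption on the 0-cells of $\sC_{\cat V}$ (all hom-objects are cofibrant in $\cat V$) together with Assumption \ref{assumption:manyobj}, each level of the bar construction is a coproduct of tensor products of cofibrant objects, making the simplicial object Reedy cofibrant. Hence $\hocolim_{\Delta^{\mathrm{op}}}$ coincides, up to weak equivalence, with the usual geometric realization, and the extra-degeneracy argument produces a weak equivalence
\[
\hocolim_{\Delta^{\mathrm{op}}}\Bar\big(\mab{\cat C}F{\Id};\widehat{\cat C};\mab{\cat C}{\Id}G\big)(a\otimes b)\xrightarrow{\simeq}\mab{\cat C}FG(a\otimes b).
\]
Applying fibrant replacement preserves this weak equivalence and yields the desired isomorphism in $\Ho\cat V$. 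Naturality in $(a,b)$ is automatic because both the augmentation and the extra degeneracy are built from composition and units in $\cat C$, which are $\cat V$-natural.

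The main technical obstacle is bookkeeping: checking that inserting $\mathrm{id}_{G(b)}$ indeed satisfies the axioms $d_{n+1}s_{n+1}=\mathrm{id}$, $d_i s_{n+1}=s_n d_i$ for $i\leq n$, and $d_{n+2}s_{n+1}$ recovers the augmentation at the appropriate level, all while keeping track of the indexing by tuples $(c_0,\ldots,c_n)$. Once this verification is in place, everything else follows from standard model-categorical facts about split augmented simplicial objects in a simplicial model category.
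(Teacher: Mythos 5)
Your proposal is correct and takes essentially the same route as the paper: evaluate at $a\otimes b$, augment via composition in $\cat C$, and produce extra degeneracies from a unit, then conclude the augmentation is a weak equivalence on realizations. The paper inserts the unit of $F(a)$ on the left rather than $\mathrm{id}_{G(b)}$ on the right, but it explicitly notes your choice works equally well.
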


\begin{proof} Recall that the $n^{\text{th}}$-level of the bar construction is 
$$\operatorname{Bar}_n (\mab {\cat C}F{\Id};\widehat{\cat C};\mab {\cat C}{\Id}G)= \mab {\cat C}F{\Id} \boxtimes \widehat{\cat C}^{\boxtimes n}\boxtimes \mab {\cat C}{\Id}G.$$
Evaluating this functor at $a\otimes b\in \Ob (\cat A^{\mathrm op} \otimes \cat B)$, we obtain
$$\big(\mab {\cat C}F{\Id} \boxtimes \widehat{\cat C}^{\boxtimes n}\boxtimes \mab {\cat C}{\Id}G\big)(a\otimes b)= \coprod_{c_0,..., c_n}\cat C\big(F(a), c_0\big)\otimes \cat C(c_0,c_1)\otimes \cdots \otimes \cat C(c_{n-1}, c_n)\otimes \cat C\big(c_n, G(b)\big).$$   

The simplicial object $\Bar(\mab {\cat C}F{\Id};\widehat{\cat C};\mab {\cat C}{\Id}G)$ admits an augmentation
$$\coprod_{c_0}\cat C\big(F(a), c_0\big)\otimes \cat C\big(c_0, G(b)\big) \to \cat C\big( F(a), G(b)\big)$$
given by the composition in $\cat C$.  This augmented simplicial object admits extra degeneracies, defined from level $-1$ to level 0 by
$$ \cat C\big( F(a), G(b)\big)\cong S\otimes \cat C\big( F(a), G(b)\big) \to \cat C\big( F(a), F(a)\big) \otimes \cat C\big( F(a), G(b)\big)  \hookrightarrow \coprod_{c_0}\cat C\big(F(a), c_0\big)\otimes \cat C\big(c_0, G(b)\big), $$
where the first morphism is given by the unit of $F(a)$, and analogously in higher levels.  (Note that we could instead have chosen to define the extra degeneracies in terms of the unit on $G(b)$.)

It follows that the augmentation map induces a natural weak equivalence 
$$|\Bar(\mab {\cat C}F{\Id};\widehat{\cat C};\mab {\cat C}{\Id}G)|(a\otimes b)\xrightarrow{\simeq} \mab {\cat C}FG(a\otimes b)$$
in $\cat V$ and thus a natural isomorphism in $\Ho \cat V$. 
\end{proof}

\begin{rmk}
There is a faithful bifunctor $$\Theta\colon \sR_{\cat V} \to \sC_{\cat V}$$ defined as follows.  A 0-cell $A$ of $\sR_{\cat V}$, i.e., a monoid in $\cat V$ with cofibrant underlying object in $\cat V$, is sent to the $\cat V$-category $\Theta(A)$ with one object $*$, $\Theta(A)(*,*)=A$, and composition given by the multiplication in $A$.  If $\mab MAB$ is a 1-cell of $\sR_{\cat V}$ with left $A$-action $\lambda$ and right $B$-action $\rho$, then $\Theta(M)$ is the $\big(\Theta(A), \Theta(B)\big)$-bimodule
\[
\Theta (M): \Theta(A)^{\mathrm{op}}\otimes \Theta(B) \to \cat V
\]
that sends the unique object $* \otimes *$ of the source to $M$, and that is defined on the $\cat V$-object of morphisms 
to be the transpose
\[
A^{\mathrm{op}}\otimes B \to \cat V (M,M)
\]
of the bimodule action
\[
M\otimes A^{\mathrm{op}}\otimes B\cong A\otimes M \otimes B \xrightarrow{\rho(\lambda (-\otimes -)\otimes -)} M.
\]
We can extend $\Theta$ to 2-cells in the obvious way.
\end{rmk}

\begin{rmk} Let $\cat {Perf}_A$ denote the full sub-$\cat V$-category of \emph{perfect} left $A$-modules, i.e., those cofibrant $A$-modules $M$ that are finitely generated and such that 
\[
\hom _A(M,A)\otimes_A ^{\mathbb L} N \cong \hom _A(M,N)
\]
in $\Ho \cat V$ for every left $A$-module $N$, where  
$-\otimes^{\mathbb L}-$ is the total left derived functor of $$-\otimes_A -: \cat {Mod}_A\times {}_A\cat {Mod}\to \cat V.$$ 
Note that the finite generation hypothesis implies that $\cat {Perf}_A$ is small. 

For every $A\in \Ob \cat {Mon}(\cat V)$ with cofibrant underlying object, there is a faithful $\cat V$-functor $$\iota_A\colon \Theta(A) \to \cat {Perf}_A$$ that sends the unique object to $A$ and that is defined on the $\cat V$-object of morphisms to be the transpose
\[
A \to \cat{Perf}_A(A,A)
\]
of the multiplication map $\mu\colon A\otimes A \to A$.

Since the 0-cells of the bicategory $\sC_{\cat V}$ are required to satisfy a cofibrancy condition, we apply cofibrant replacement and consider $\cat {QPerf}_A$ instead (cf. Remark \ref{rmk:cofibVcat}), which is small, since $\cat{Perf}_A$ is, and  therefore a $0$-cell of $\sC_{\cat V}$. Since $A$ and therefore $\Theta(A)$ are cofibrant, the $\cat V$-functor $\iota_A$ lifts through the acylic fibration $\cat{QPerf}_A\to \cat {Perf}_A$, giving rise to a 
$\cat V$-functor $$\iota_A'\colon \Theta(A) \to \cat {QPerf}_A$$

As special cases of Example \ref{exmp:bimodule}, the functor $\iota_A'$ gives rise to two $\cat V$-category bimodules, 
$$\mab {(\cat{QPerf}_A)} {\iota _A'}{\Id} \in \Ob \mab {\cat {Mod}}{\Theta(A)}{\cat{QPerf}_A}$$ 
and
$$\mab {(\cat{QPerf}_A)}{\Id} {\iota' _A}\in \Ob \mab {\cat {Mod}}{\cat{QPerf}_A}{\Theta(A)}.$$
\end{rmk}

\begin{prop}\label{prop:perfmorita}
For every $A\in \Ob \cat {Mon}(\cat V)$ with cofibrant underlying object in $\cat V$, the pair 
\[
\Big(\,\mab {(\cat{QPerf}_A)} {\iota' _A}{\Id},\, \mab {(\cat{QPerf}_A)}{\Id} {\iota' _A}\,\Big)
\]
is a Morita equivalence in $\sC_{\cat V}$ between $\Theta(A)$ and $\cat{QPerf}_A$.
\end{prop}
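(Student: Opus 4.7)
The plan is to exhibit natural isomorphisms $M\odot N\cong U_{\Theta(A)}$ and $N\odot M\cong U_{\cat{QPerf}_A}$ in the respective morphism categories of $\sC_{\cat V}$, and then to use these isomorphisms in both directions as the required evaluation and coevaluation 2-cells, finally verifying the triangle identities of Definition \ref{def:Morita}. Throughout, write $M=\mab{(\cat{QPerf}_A)}{\iota'_A}{\Id}$ and $N=\mab{(\cat{QPerf}_A)}{\Id}{\iota'_A}$.

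First I would compute $M\odot N$ by direct application of Lemma \ref{lem:bar-compute} with $\cat C=\cat{QPerf}_A$ and $F=G=\iota'_A$, which yields $M\odot N\cong\mab{(\cat{QPerf}_A)}{\iota'_A}{\iota'_A}$ in $\Ho\bigl(\mab{\cat{Mod}}{\Theta(A)}{\Theta(A)}\bigr)$. Evaluated at the unique object $*\otimes *$ of $\Theta(A)^{\op}\otimes\Theta(A)$, this bimodule returns $\cat{QPerf}_A(A,A)\simeq\hom_A(A,A)$, which is canonically identified with $A=\widehat{\Theta(A)}(*\otimes *)$ via the standard isomorphism between left $A$-module endomorphisms of $A$ and elements of $A$. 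This identification is natural with respect to the bimodule structures and so furnishes an isomorphism $\ve'\colon M\odot N\xrightarrow{\cong} U_{\Theta(A)}$, with inverse $\eta$.

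The more substantive step is the computation of $N\odot M$, where the perfectness hypothesis enters essentially. Since $\Theta(A)$ has a single object, the simplicial bar construction $\Bar(N;\widehat{\Theta(A)};M)$ evaluated at $P\otimes P'\in\Ob(\cat{QPerf}_A^{\op}\otimes\cat{QPerf}_A)$ reduces to the ordinary bar construction $\Bar(\hom_A(P,A);A;\hom_A(A,P'))$, whose geometric realization models the derived tensor product $\hom_A(P,A)\otimes_A^{\mathbb L}\hom_A(A,P')$ in $\Ho\cat V$. Combining the canonical equivalence $\hom_A(A,P')\simeq P'$ with the defining property of perfect $A$-modules recalled in the excerpt, namely $\hom_A(P,A)\otimes_A^{\mathbb L}Q\cong\hom_A(P,Q)$ for every $A$-module $Q$, I obtain natural isomorphisms
\[
(N\odot M)(P\otimes P')\cong\hom_A(P,P')\cong\cat{QPerf}_A(P,P')=\widehat{\cat{QPerf}_A}(P\otimes P')
\]
in $\Ho\cat V$. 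Since these equivalences are natural in $P$ and $P'$ in the $\cat V$-enriched sense, they assemble into an isomorphism $\ve\colon N\odot M\xrightarrow{\cong} U_{\cat{QPerf}_A}$, with inverse $\eta'$.

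The main obstacle will be verifying the triangle identities for each of the two dual pairs $(M,N)$ and $(N,M)$. Unwinding the isomorphisms above, each triangle composite reduces to a diagram built from the extra-degeneracy argument underlying Lemma \ref{lem:bar-compute} together with bicategory associators and unitors; since all of my evaluation and coevaluation 2-cells originate from the canonical composition and unit morphisms in $\cat{QPerf}_A$, these composites ultimately amount to associativity and unitality of composition in $\cat{QPerf}_A$ combined with unit coherence in $\sC_{\cat V}$, and so collapse to identities in $\Ho$. The inverse-pair requirement of Definition \ref{def:Morita} holds by construction of $\eta,\eta'$ as the inverses of $\ve',\ve$, completing the argument that $(M,N)$ is a Morita equivalence between $\Theta(A)$ and $\cat{QPerf}_A$.
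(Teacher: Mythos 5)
Your proposal follows essentially the same route as the paper: compute $M\odot N\cong U_{\Theta(A)}$ via Lemma \ref{lem:bar-compute} and the identification $\cat{QPerf}_A(A,A)\simeq A$, compute $N\odot M\cong U_{\cat{QPerf}_A}$ by reducing the bar construction over the one-object category $\Theta(A)$ to the ordinary bar construction over $A$ and invoking the defining property of perfect modules, and then take these isomorphisms (and their inverses) as the evaluation and coevaluation 2-cells. Your treatment of the triangle identities is a sketch at roughly the same level of detail as the paper's (which verifies them via a bisimplicial description), so the argument is correct and not materially different.
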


\begin{proof}
The definition in Example \ref{exmp:bimodule} says that 
\[
\mab {(\cat{QPerf}_A)} {\iota' _A}{\Id}\colon \Theta(A)^{\mathrm{op}}\otimes \cat {QPerf}_A \to \cat V\colon (*,M)\mapsto \cat{QPerf}_A(A,M),
\]
while 
\[
\mab {(\cat{QPerf}_A)} {\Id}{\iota' _A}\colon \cat {QPerf}_A^{\mathrm{op}}\otimes \Theta(A)  \to \cat V\colon (M,*)\mapsto \cat{QPerf}_A(M,A).
\]
We show first that $\mab {(\cat{QPerf}_A)} {\iota' _A}{\Id}\odot \mab {(\cat{QPerf}_A)}{\Id} {\iota' _A} \cong U_{\Theta(A)}$ and $\mab {(\cat{QPerf}_A)}{\Id} {\iota' _A}\odot \mab {(\cat{QPerf}_A)}{\iota' _A}{\Id}\cong U_{\cat{QPerf}_A}$.

Observe that 
\begin{align*}
  \mab {(\cat{QPerf}_A)} {\iota' _A}{\Id}\odot \mab {(\cat{QPerf}_A)}{\Id} {\iota' _A}&=|\Bar(\mab {(\cat{QPerf}_A)} {\iota' _A}{\Id}; \widehat {\cat{QPerf}_A};\mab {(\cat{QPerf}_A)}{\Id} {\iota' _A})|\\
  &\cong \mab {(\cat{QPerf}_A)} {\iota' _A}{\iota' _A}\\
  &\cong \mab {(\cat{Perf}_A)} {\iota _A}{\iota _A}\\
  &\cong \mab {\Theta (A)} {\Id}{\Id}\\
  &=\widehat{\Theta (A)}=U_{\Theta(A)},
\end{align*}
where the first isomorphism follows from Lemma \ref{lem:bar-compute} and the second from the fact that $\cat{QPerf}_A$ is weakly equivalent to $\cat{Perf}_A,$ while the third is due to the fact that 
$$\Theta(A)(*,*) =A \cong \cat {Perf}_A(A,A).$$

On the other hand,
$$ \mab {(\cat{QPerf}_A)}{\Id} {\iota' _A}\odot \mab {(\cat{QPerf}_A)}{\iota' _A}{\Id} =|\Bar\big(\mab{(\cat{QPerf}_A)}{\Id} {\iota' _A}; \widehat{\Theta (A)};\mab {(\cat{QPerf}_A)}{\iota' _A}{\Id}\big)|.$$
For all $n\geq 0$, the $\cat V$-functor
$$\operatorname{Bar}_n \big(\mab{(\cat{QPerf}_A)}{\Id} {\iota' _A}; \widehat{\Theta (A)};\mab {(\cat{QPerf}_A)}{\iota' _A}{\Id}\big)= \mab{(\cat{QPerf}_A)}{\Id} {\iota' _A} \boxtimes \widehat{\Theta (A)}^{\boxtimes n}\boxtimes \mab {(\cat{QPerf}_A)}{\iota' _A}{\Id}$$
from $\cat{QPerf}_A^{\mathrm op}\otimes \cat{QPerf}_A$ to  $\cat V$ is specified on a pair of perfect $A$-modules $M$ and $N$ by
\begin{align*}
\operatorname{Bar}_n \big(\mab{(\cat{QPerf}_A)}{\Id} {\iota' _A}; \widehat{\Theta (A)};&\mab {(\cat{QPerf}_A)}{\iota' _A}{\Id}\big)(M\otimes N)\\
&=\mab{(\cat{QPerf}_A)}{\Id} {\iota' _A} (M\otimes \iota'_A(*)) \otimes A^{\otimes n}\otimes \mab {(\cat{QPerf}_A)}{\iota' _A}{\Id}(\iota'_A(*)\otimes N)\\
&=\cat{QPerf}_A(M,A)\otimes A^{\otimes n}\otimes  \cat{QPerf}_A(A,N)\\
&\cong \cat{QPerf}_A(M,A)\otimes A^{\otimes n}\otimes  N,
\end{align*}
where the last isomorphism follows from the facts that $\cat V$ is cofibrantly generated and that there is a weak equivalence $\cat{QPerf}_A(A,N) \to \cat{Perf}_A(A,N)\cong N$, with cofibrant domain and codomain.

It follows that for all perfect $A$-modules $M$ and $N$
\begin{align*} 
 \Big(\mab {(\cat{QPerf}_A)}{\Id} {\iota' _A}\odot \mab {(\cat{QPerf}_A)}{\iota' _A}{\Id}\Big)(M\otimes N)&\cong |\Bar\big(\cat {QPerf}_A(M,A); A; N\big)|\\
 &= \cat {QPerf}_A(M,A) {\otimes_A}^{\mathbb L} N\\
  &\cong \cat {QPerf}_A(M,N)\\
  &=  \mab {(\cat{QPerf}_A)}{\Id} {\Id}(M\otimes N)\\
  &= \widehat {\cat{QPerf}_A}(M\otimes N) =U_{\cat{QPerf}_A}(M\otimes N)
\end{align*}
where the second isomorphism holds since $\cat {QPerf}_A(M,M')$ and $\cat {Perf}_A(M,M')=\hom_A(M,M')$ are weakly equivalent for every $A$-module $M'$, and $M$ is perfect. Thus 
$$\mab {(\cat{QPerf}_A)}{\Id} {\iota' _A}\odot \mab {(\cat{QPerf}_A)}{\iota' _A}{\Id}\cong U_{\cat{QPerf}_A},$$
as desired.

The natural isomorphisms above provide us with the required structure maps for the dual pairs $\Big(\mab {(\cat{QPerf}_A)}{\Id} {\iota' _A},  \mab {(\cat{QPerf}_A)}{\iota' _A}{\Id}\Big)$ and $\Big(\mab {(\cat{QPerf}_A)}{\iota' _A}{\Id} ,  \mab {(\cat{QPerf}_A)}{\Id}{\iota' _A}\Big)$.  It remains only to show that triangle equalities are satisfied.

Checking the triangle inequalities is a straightforward exercise, if one starts by describing $$\mab {(\cat{QPerf}_A)}{\iota' _A}{\Id}\odot \mab {(\cat{QPerf}_A)}{\Id} {\iota' _A}\odot \mab {(\cat{QPerf}_A)}{\iota' _A}{\Id}$$ and $$\mab {(\cat{QPerf}_A)}{\Id} {\iota' _A}\odot \mab {(\cat{QPerf}_A)}{\iota' _A}{\Id}\odot \mab {(\cat{QPerf}_A)}{\Id} {\iota' _A}$$ as realizations of bisimplicial objects and $\mab {(\cat{QPerf}_A)}{\iota' _A}{\Id}$ and $\mab {(\cat{QPerf}_A)}{\Id} {\iota' _A}$ as realizations of constant bisimplicial objects. The evaluations and coevaluations can then be easily described in terms of bisimplicial maps that clearly satisfy the triangle inequalities.
\end{proof}

\begin{defn}
Let $\cat A$ be a $\cat V$-category, and let $M$ be an $\cat A$-bimodule. 
The \emph{Hochschild construction} on $\cat A$ with coefficients in $M$, denoted $\HH_{\cat{Vcat}}(\cat A;M)$, is $|B_{\bullet}^{\mathrm {cyc}}(\cat A;M)|$, the geometric realization of the cyclic bar construction on $\cat A$ with coefficients in $M$.
\end{defn}

\begin{rmk}
Since the cyclic bar construction can be viewed as a simplicial object in $\cat V$ (cf. Definition \ref{defn:bar-multiobj}), the Hochschild construction on $\cat A$ with coefficients in $M$ can be viewed as an object in $\cat V$.
\end{rmk}

\begin{prop}\label{prop:Hochschild-manyobj}
The Hochschild construction $\HH_{\cat {VCat}}$ extends to a $\Ho \cat {V}$-shadow on $\sC_{\cat V}$. 
\end{prop}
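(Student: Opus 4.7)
The plan is to mimic the structure of the proof of Proposition \ref{prop:HH_is_shadow}, replacing the one-object Dennis--Morita--Waldhausen argument by its many-object counterpart and using Lemma \ref{lem:bar-compute} to handle the enriched bar and tensor manipulations.

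First, I would verify that for every $0$-cell $\cat A$ of $\sC_{\cat V}$, the construction $\HH_{\cat{VCat}}(\cat A;-) = |B_{\bullet}^{\mathrm{cyc}}(\widehat{\cat A}; -)|$ descends to a functor from $\Ho(\mab{\cat{Mod}}{\cat A}{\cat A})$ to $\Ho\cat V$. This amounts to checking that weak equivalences between bifibrant $\cat A$-bimodules induce levelwise weak equivalences of cofibrant objects in the cyclic bar construction (using that the hom-objects of $\cat A$ are cofibrant in $\cat V$ by the definition of $\sC_\cat V$), and that the homotopy colimit plus functorial fibrant replacement preserves such equivalences.

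Next, for every pair of $0$-cells $\cat A, \cat B$ and every pair of $1$-cells $\mab M{\cat A}{\cat B}$ and $\mab N{\cat B}{\cat A}$, I would construct a natural isomorphism
\[
\theta_{\cat A, \cat B}\colon \HH_{\cat{VCat}}(\cat A; M\odot N) \xrightarrow{\cong} \HH_{\cat{VCat}}(\cat B; N\odot M).
\]
The approach is to express both sides as geometric realizations of bisimplicial objects in $\cat V$. Unpacking the definition of $\odot$ and commuting realizations (with all objects chosen cofibrant), the left-hand side is weakly equivalent to the realization of
\[
B_{\bullet}^{\mathrm{cyc}}\bigl(\widehat{\cat A};\, \Bar(M;\widehat{\cat B};N)\bigr),
\]
which at bidegree $(p,q)$ consists of the coproduct over sequences $a_0,\ldots,a_p\in\Ob\cat A$ and $b_0,\ldots,b_q\in\Ob\cat B$ of the tensor product of the relevant hom-objects of $\cat A$, copies of $M$ and $N$ evaluated on these objects, and hom-objects of $\cat B$. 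The right-hand side is analogously the realization of $B_{\bullet}^{\mathrm{cyc}}\bigl(\widehat{\cat B};\, \Bar(N;\widehat{\cat A};M)\bigr)$. A cyclic reindexing of the factors produces a componentwise isomorphism of bisimplicial objects in $\cat V$ that is compatible with faces and degeneracies; this is the many-object analogue of the classical observation that the cyclic bar construction of $A$ on $M\otimes_B N$ and of $B$ on $N\otimes_A M$ have the same underlying bisimplicial object up to a cyclic shuffle.

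Finally, I would check the shadow coherence diagrams from Definition \ref{defn:shadow}. The associator and unitor isomorphisms in $\sC_\cat V$ come from the monoidal structure on the base $\cat V$ and from Lemma \ref{lem:bar-compute}, and the cyclic reshuffling defining $\theta$ is manifestly compatible with both: a triple tensor product $(M\odot N)\odot P$ realized as a trisimplicial object admits three natural cyclic permutations corresponding to the three vertices of the hexagon, so the hexagon and triangle commute on the nose at the bisimplicial/trisimplicial level, hence after realization. The main obstacle is the natural isomorphism $\theta$: although it is conceptually ``obvious cyclic shuffling'', one must be careful to keep track of the enriched structure, that the Reedy (or objectwise) cofibrancy hypotheses of Assumption \ref{assumption:manyobj} ensure that the homotopy colimits we are computing levelwise agree with their iterated counterparts, and that fibrant replacement interacts correctly so that $\theta$ descends to $\Ho\cat V$. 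Once this is done, invoking Proposition \ref{prop:morita} yields the appropriate Morita invariance and sets up Corollary \ref{cor:agreement} in the same way as Corollary \ref{cor:morita} in the one-object case.
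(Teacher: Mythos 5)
Your proposal is correct and follows essentially the same route as the paper's proof: the paper likewise notes that functoriality is clear, obtains the cyclic isomorphism $\theta$ via the (many-object) Dennis--Morita--Waldhausen argument identifying the two bisimplicial objects $B_{\bullet}^{\mathrm{cyc}}\bigl(\widehat{\cat A};\Bar(M;\widehat{\cat B};N)\bigr)$ and $B_{\bullet}^{\mathrm{cyc}}\bigl(\widehat{\cat B};\Bar(N;\widehat{\cat A};M)\bigr)$ (citing \cite[Proposition 6.2]{blumberg-mandell:localization}), and deduces the coherence diagrams from the associativity and unit coherence of the symmetric monoidal structure on $\cat V$. Your write-up merely spells out in more detail the bisimplicial cyclic reshuffle and the cofibrancy bookkeeping that the paper delegates to that citation and to the one-object case.
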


\begin{proof}
The Hochschild construction clearly defines a functor 
\[
\HH_{\cat {VCat}} (\cat A; -)\colon \Ho (\mab{\cat {Mod}}{\cat A}{\cat A}) \to \Ho \cat V
\]
for every $\cat V$-category $\cat A$. As in the proof of Proposition \ref{prop:HH_is_shadow},  the usual Dennis--Morita--Waldhausen argument \cite[Proposition 6.2]{blumberg-mandell:localization} shows that 
\[
\HH_{\cat {VCat}} (\cat A;M\odot N) \cong \HH_{\cat {VCat}} (\cat B;N\odot M)
\]
naturally, for all $\mab M{\cat A}{\cat B}$ and $\mab N{\cat B}{\cat A}$. The compatibility of the natural isomorphism above with the associator and the unitor again follows immediately from the associativity and unit coherence of the symmetric monoidal structure on $\cat V$. 
\end{proof}

As in the single-object case, the proposition above and Proposition \ref{prop:morita} together imply that Morita invariance holds in the many-object case as well.

\begin{cor}\label{cor:morita-many} Let $(\mab M{\cat A}{\cat B}, \mab N{\cat B}{\cat A})$ be a Morita equivalence in $\sC_{\cat V}$.  For any $\cat A$-bimodule $Q$, there is an isomorphism
\[
\HH_{\cat {VCat}}(\cat A;Q) \xrightarrow{\cong} \HH _{\cat {VCat}}(\cat B;N\odot Q\odot M).
\]
In particular, the Euler characteristic
\[
\chi(M)\colon \HH_{\cat {VCat}}(\cat A;\cat A) \to \HH_{\cat {VCat}}(\cat B;\cat B)
\]
is an isomorphism.
\end{cor}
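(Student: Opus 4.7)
The plan is to deduce this corollary as a direct application of the general bicategorical trace machinery (Proposition \ref{prop:morita}) to the shadow structure on $\sC_{\cat V}$ that we have already established in Proposition \ref{prop:Hochschild-manyobj}. Since both the Morita-invariance and Euler-characteristic statements are formal consequences of having a shadow together with a Morita equivalence, very little remains to be done beyond bookkeeping.

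More precisely, I would first recall that by Proposition \ref{prop:Hochschild-manyobj} the Hochschild construction $\HH_{\cat{VCat}}$ is a $\Ho \cat V$-shadow on $\sC_{\cat V}$, with components
\[
\Sh{\cat A}{-}=\HH_{\cat{VCat}}(\cat A;-)\colon \Ho\bigl(\mab{\cat{Mod}}{\cat A}{\cat A}\bigr)\to \Ho \cat V
\]
and natural isomorphisms $\theta_{\cat A,\cat B}\colon \HH_{\cat{VCat}}(\cat A;M\odot N)\cong \HH_{\cat{VCat}}(\cat B;N\odot M)$. By hypothesis, $(\mab M{\cat A}{\cat B}, \mab N{\cat B}{\cat A})$ is a Morita equivalence in $\sC_{\cat V}$ in the sense of Definition \ref{def:Morita}, so both $(M,N)$ and $(N,M)$ are dual pairs whose units and counits are mutually inverse.

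Next, I would apply Proposition \ref{prop:morita} to this shadow and Morita equivalence. Taking an arbitrary $\cat A$-bimodule $Q$, that proposition produces a trace
\[
\tr{\eta\odot Q\odot M}\colon \HH_{\cat{VCat}}(\cat A;Q)\to \HH_{\cat{VCat}}(\cat B; N\odot Q\odot M),
\]
and guarantees that it is an isomorphism in $\Ho \cat V$, with explicit inverse $\tr{N\odot Q\odot \ve}$. This gives the first asserted isomorphism. Taking $Q=\widehat{\cat A}=U_{\cat A}$ and observing that $N\odot \widehat{\cat A}\odot M\cong N\odot M\cong \widehat{\cat B}=U_{\cat B}$ via the unitors and the counit of the dual pair, the isomorphism becomes precisely the Euler characteristic $\chi(M)\colon \HH_{\cat{VCat}}(\cat A;\cat A)\to \HH_{\cat{VCat}}(\cat B;\cat B)$, as constructed just after Proposition \ref{prop:morita}. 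Alternatively, one can invoke the final sentence of Proposition \ref{prop:morita} directly, noting that $\chi(N)$ provides the two-sided inverse.

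There is essentially no hard step here; the content is entirely encoded in the shadow axioms and in Proposition \ref{prop:morita}. The only thing worth being careful about is the verification that our $\theta_{\cat A,\cat B}$ satisfies the associativity and unit compatibilities required of a shadow (so that Proposition \ref{prop:morita} applies), but these were already established in the proof of Proposition \ref{prop:Hochschild-manyobj}. Thus the proof reduces to the single sentence: apply Proposition \ref{prop:morita} to the shadow $\HH_{\cat{VCat}}$ on $\sC_{\cat V}$ and the given Morita equivalence $(M,N)$.
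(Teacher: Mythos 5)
Your proposal is correct and follows exactly the paper's route: the corollary is deduced by applying Proposition \ref{prop:morita} to the $\Ho\cat V$-shadow $\HH_{\cat{VCat}}$ on $\sC_{\cat V}$ established in Proposition \ref{prop:Hochschild-manyobj}, with the Euler characteristic case obtained by specializing $Q$ to the unit $\widehat{\cat A}$. Nothing is missing; the identification $N\odot U_{\cat A}\odot M\cong N\odot M\cong U_{\cat B}$ that you spell out is the only bookkeeping involved, and it matches what the paper leaves implicit.
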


The Hochschild construction on $\cat{VCat}$ extends that on $\cat V$, as mediated by the bifunctor $\Theta$.

\begin{lem}\label{lem:v-to-vcat}
For any monoid $A$ in $\cat V$ and any $A$-bimodule $M$,
\[
\Theta\big(\HH_{\cat V}(A;M)\big)\cong \HH_{\cat {VCat}}\big(\Theta(A);\Theta(M) \big).
\]
\end{lem}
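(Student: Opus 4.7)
The plan is to show that the underlying cyclic objects in $\cat V$ are isomorphic, and then apply geometric realization.

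First, I would unpack the right-hand side at the level of the cyclic bar construction. Since $\Theta(A)$ has a single object $*$ with $\Theta(A)(*,*)=A$, for any $\Theta(A)$-bimodule $N$ the many-object box construction, evaluated at the unique pair $(*,*)$, reduces to a single summand:
\[
\big(N \boxtimes \widehat{\Theta(A)}^{\boxtimes n}\big)(*\otimes *) \;=\; \coprod_{*,\ldots,*} N(*\otimes *) \otimes \Theta(A)(*,*)^{\otimes n} \;\cong\; N(*\otimes *) \otimes A^{\otimes n}.
\]
Taking $N = \Theta(M)$ and recalling $\Theta(M)(*\otimes *) = M$, this shows that in each simplicial degree
\[
B_n^{\mathrm{cyc}}\big(\widehat{\Theta(A)};\Theta(M)\big)(*\otimes *) \;\cong\; M \otimes A^{\otimes n} \;=\; B_n^{\mathrm{cyc}}(A;M),
\]
so the two cyclic objects agree level-wise in $\cat V$.

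Second, I would verify that the simplicial and cyclic structure maps match. By the definition of $\Theta$, the left and right actions of $\Theta(A)$ on $\Theta(M)$ are obtained, via the adjunction between $\otimes$ and the internal hom, from the original $A$-bimodule action on $M$; likewise composition in $\Theta(A)$ is literally the multiplication of $A$, and the unit of $*$ is the unit of $A$. Thus the three face maps (right $A$-action, composition, left $A$-action combined with the cyclic permutation in the last face) and the degeneracies (insertion of the unit) coincide under the identification above with the corresponding structure maps of the one-object cyclic bar construction $B_\bullet^{\mathrm{cyc}}(A;M)$.

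Third, I would conclude. The identifications above assemble into an isomorphism of cyclic objects in $\cat V \cong \mab{\cat{Mod}}{\cat S}{\cat S}$, and then passing to $(\hocolim_{\Delta^{\mathrm{op}}}(-))^f$, which is functorial, yields the desired isomorphism
\[
\HH_{\cat V}(A;M) \;\cong\; \HH_{\cat{VCat}}\big(\Theta(A);\Theta(M)\big)
\]
in $\cat V$; applying the identification implicit in writing $\Theta$ on the left-hand side, this is precisely the asserted isomorphism. The only real work here is notational bookkeeping — checking that the one-object collapse of the $\boxtimes$-construction really produces the expected iterated tensor product, and that the $\cat V$-functoriality encoded in $\Theta$ transports each structure map correctly — and no deeper obstacle arises.
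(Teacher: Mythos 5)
Your proposal is correct and follows essentially the same route as the paper: collapse the many-object cyclic bar construction for the one-object $\cat V$-category $\Theta(A)$ to $M\otimes A^{\otimes n}$ levelwise, match the face and degeneracy maps via the definition of $\Theta$, and realize, with the paper being only slightly more explicit that $\Theta\big(\HH_{\cat V}(A;M)\big)$ and the levels of $B^{\mathrm{cyc}}_\bullet\big(\Theta(A);\Theta(M)\big)$ (restricted to $\Theta(S)^{\mathrm{op}}\otimes\Theta(S)$, since the last face is not a $\Theta(A)$-bimodule map) are the corresponding objects of $\mab{\cat{Mod}}{\cat S}{\cat S}$ with unit endomorphism structure. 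No gaps.
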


\begin{proof}
Observe that 
$$\Theta\big(\HH_{\cat V}(A;M)\big): \Theta (S)^{\mathrm {op}}\otimes \Theta (S) \to \cat V$$
is the functor that sends the unique object $*\otimes *$ to $\HH_{\cat V}(A;M)$ and that is defined on the morphism object to be the unit of the endomorphism object for $\HH_{\cat V}(A;M)$,
$$ S\otimes S \cong S \to \cat V \big(\HH_{\cat V}(A;M), \HH_{\cat V}(A;M)\big).$$

On the other hand, the $n^{\text{th}}$ level of the cyclic bar construction $B_\bullet ^{\mathrm {cyc}}\big(\Theta (A);\Theta (M) \big)$ is the restriction of the functor
$$\Theta(M) \boxtimes \Theta (A)^{\boxtimes n}: \Theta (A)^{\mathrm{op}}\otimes \Theta(A) \to \cat V,$$
which sends the unique object $*\otimes *$ to $M\otimes A^{\otimes n}$, to  $\Theta (S)^{\mathrm {op}}\otimes \Theta (S)$.  (This restriction is necessary, since the face maps of the cyclic bar construction are not $\Theta(A)$-bimodule maps.)
In particular $\Theta(M) \boxtimes \Theta (A)^{\boxtimes n}|_{\Theta (S)^{\mathrm {op}}\otimes \Theta (S)}$ is defined on the morphism object $S$ to be the unit of the endomorphism object on $M\otimes A^{\otimes n}$,
$$ S\otimes S \cong S \to \cat V \big(M\otimes A^{\otimes n}, M\otimes A^{\otimes n}\big).$$
The geometric realization of $B_\bullet ^{\mathrm {cyc}}\big( \Theta (A);\Theta (M)  \big)$ is thus clearly isomorphic to $\Theta\big(\HH_{\cat V}(A;M)\big)$.
\end{proof}

An ``agreement"-type result now follows from Propositions \ref{prop:morita}, \ref{prop:perfmorita}, and \ref{prop:Hochschild-manyobj}.

\begin{cor}\label{cor:agreement} For any monoid $A$ in $\cat V$,
\[
\Theta\big(\HH_{\cat V}(A;\widehat{A})\big)\cong  \HH_{\cat{VCat}}\big(\,\cat {QPerf}_A;\widehat{\cat {QPerf}_A}\big).
\]
\end{cor}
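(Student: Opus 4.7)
The plan is to chain together three results already established in the section: Lemma \ref{lem:v-to-vcat}, Proposition \ref{prop:perfmorita}, and the Morita invariance statement for the many-object Hochschild shadow (Corollary \ref{cor:morita-many}, which itself packages Propositions \ref{prop:morita} and \ref{prop:Hochschild-manyobj}). The overall strategy is: translate the left-hand side from $\cat V$ into $\cat{VCat}$ via $\Theta$, observe that after translation the coefficients become the unit 1-cell, and then apply Morita invariance along the Morita equivalence between $\Theta(A)$ and $\cat{QPerf}_A$ to swap $\Theta(A)$ for $\cat{QPerf}_A$.

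Concretely, first I would note that under $\Theta$, the canonical $A$-bimodule $\widehat A$ is sent to the canonical $\Theta(A)$-bimodule $\widehat{\Theta(A)}$, which is precisely the unit 1-cell $U_{\Theta(A)}$ in $\sC_{\cat V}(\Theta(A),\Theta(A))$. This is immediate from the definitions of $\Theta$ and of $\widehat{(-)}$ in Example \ref{exmp:bimodule}. Applying Lemma \ref{lem:v-to-vcat} then yields
\[
\Theta\big(\HH_{\cat V}(A;\widehat A)\big) \cong \HH_{\cat{VCat}}\big(\Theta(A);\widehat{\Theta(A)}\big).
\]

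Next, I would invoke Proposition \ref{prop:perfmorita}, which provides a Morita equivalence in $\sC_{\cat V}$ between $\Theta(A)$ and $\cat{QPerf}_A$. Since $\HH_{\cat{VCat}}$ is a $\Ho\cat V$-shadow on $\sC_{\cat V}$ by Proposition \ref{prop:Hochschild-manyobj}, the Euler characteristic statement of Corollary \ref{cor:morita-many} applied to this Morita equivalence gives an isomorphism
\[
\HH_{\cat{VCat}}\big(\Theta(A);\widehat{\Theta(A)}\big) \xrightarrow{\cong} \HH_{\cat{VCat}}\big(\cat{QPerf}_A;\widehat{\cat{QPerf}_A}\big),
\]
since the unit 1-cells $U_{\Theta(A)}$ and $U_{\cat{QPerf}_A}$ are exactly $\widehat{\Theta(A)}$ and $\widehat{\cat{QPerf}_A}$ by the definition of $\sC_{\cat V}$. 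Composing this isomorphism with the identification from the previous paragraph produces the desired isomorphism, and the proof is complete.

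The only subtle point that requires care is verifying that $\Theta(\widehat A) = U_{\Theta(A)}$ on the nose (or at least up to the canonical isomorphism implicit in the statement). This is a bookkeeping check about how $\Theta$ transports the regular bimodule structure to the morphism $\cat V$-object of a one-object $\cat V$-category; it is not a real obstacle, but one must be careful not to inadvertently replace with a weakly equivalent but non-isomorphic object. All other ingredients are already assembled in the section, so no further computation is needed.
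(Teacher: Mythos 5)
Your proposal is correct and follows exactly the route the paper intends: the paper deduces the corollary from Lemma \ref{lem:v-to-vcat} together with Propositions \ref{prop:morita}, \ref{prop:perfmorita}, and \ref{prop:Hochschild-manyobj}, which is precisely your chain of translating via $\Theta$, identifying $\Theta(\widehat{A})$ with the unit $1$-cell $\widehat{\Theta(A)}$, and applying the Euler characteristic isomorphism along the Morita equivalence between $\Theta(A)$ and $\cat{QPerf}_A$. Your extra care about $\Theta(\widehat{A})=U_{\Theta(A)}$ is the right bookkeeping point and is consistent with the paper's definitions.
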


 \begin{exmp}

 The theory of unstable topological Hochschild homology fits into this framework. Introduced by Nikolaus \cite{Arb18}, unstable THH is the Hochschild homology of small categories enriched in Kan complexes. In particular, $u\THH =  \HH_{\cat{VCat}}$ for $\cat V$ the category of Kan complexes, which satisfies Assumption \ref{assumption:manyobj}. As this fits into the general framework developed above, by Proposition \ref{prop:Hochschild-manyobj}  unstable topological Hochschild homology is a shadow. It is therefore Morita invariant.

 \end{exmp}
 
 A possible analogue of Proposition~\ref{prop:Hochschild-manyobj} is given by replacing simplicial categories with differential graded categories.
 
\begin{rmk} 
 In \cite[Proposition 3.11, Theorem 4.7]{schweigert-woike}, Schweigert and Woike show that the differential graded conformal block for the torus is an example of the Hochschild complex for a differential graded category. Although we will not pursue a proof in this paper, we suggest that these dg Hochschild constructions are also examples of bicategorical shadows.  An alternative to showing this directly would be to pass back and forth between the differential graded and simplicial contexts using the zig zag of Quillen equivalences in \cite[Section 6]{tabuada}.

\end{rmk}

\section{Application to equivariant Hochschild theories}\label{sec:equivHochschild}

One key motivation for developing a general bicategorical framework for Hochschild homology is to study equivariant versions of Hochschild homology. In particular, in \cite{AnBlGeHiLaMa}, Angeltveit, Blumberg, Gerhardt, Hill, Lawson, and Mandell define a theory of $C_n$-twisted topological Hochschild homology for a $C_n$-equivariant ring spectrum, $\THH_{C_n}(R)$. This theory has an algebraic analogue, the twisted Hochschild homology of Green functors, developed in \cite{BlGeHiLa}. In this section we set up a framework for Hochschild theories twisted by an automorphism and use this framework to show that both twisted THH and Hochschild homology for Green functors are Morita invariant. 
\subsection{Hochschild theories twisted by an automorphism}\label{Section:twisted}
Let $(\cat V, \otimes, S)$ be a symmetric monoidal, simplicial model category satisfying Assumption \ref{assumption:oneobj}.  Generalizing the contexts of both $C_n$-twisted topological Hochschild homology of $C_n$-spectra  and $C_n$-twisted Hochschild homology of $C_n$-Green functors, we consider the category $\Vaut$ of objects in $\cat V$ equipped with automorphisms. More precisely, the objects of $\Vaut$ are pairs $(X,\varphi)$, where $X$ is an object of $\cat V$, and $\varphi\colon X \to X$ is an automorphism; morphisms in $\Vaut$ are morphisms in $\cat V$ that commute with the automorphism.  The monoidal structure on $\cat V$ clearly lifts to $\Vaut$, which also naturally inherits a simplicial structure from $\cat V$. We denote the lifted tensor product on $\Vaut$ by $\otimes$, and note that the unit object is $(S, \Id_S)$.

\begin{exmps}
\label{exmp:Vaut}
Let $C_n$ denote the cyclic group of order $n$, and let $g\in C_n$ be a generator. The following two examples of $\Vaut$ are the ground categories for $C_n$-twisted THH and $C_n$-twisted Hochschild homology for Green functors, respectively.
\begin{enumerate}
	\item Let $\cat V$ be the symmetric monoidal category of orthogonal $C_n$-spectra, denoted by $(\cat{Sp}^{C_n}, \wedge, S^0).$ By \cite[Chapter III, Theorem 4.2]{mandell-may} the stable model structure on orthogonal $G$-spectra is cofibrantly generated and $G$-topological and therefore topological, whence also a simplicial model structure by \cite[Section 3.8]{riehl}.  Moreover, the desired model structures on categories of bimodules exist by \cite[Chapter III, Theorem 7.6]{mandell-may}, so that Assumption \ref{assumption:oneobj} is indeed satisfied.
	Since $C_n$ is abelian, the conjugation map $c_g$ is the identity. 
Thus for any $C_n$-spectrum $X$, left multiplication by $g$ induces an automorphism,
\[
	g: X=c_g^* X \to X: x\mapsto gx.
\]
This defines an object $(X,g)$ in $\cat{Sp}^{C_n}_\cat{aut}.$
\item The category of simplicial $C_n$-Mackey functors, denoted by $s\mathsf{Mack}_{C_n}$, is a symmetric monoidal simplicial model category by \cite[Theorem 4.3]{BlGeHiLa}. This category is cofibrantly generated, similarly to the category of simplicial abelian groups. (For $G = \{e\}$, $s\Mack = s\cat{Ab}$.) By Proposition 4.4 of \cite{BlGeHiLa}, every object in this category is fibrant. Therefore by Lemma A.3 of \cite{schwede-shipley}, $s\mathsf{Mack}_{C_n}$ satisfies Assumption \ref{assumption:oneobj}.
For each $C_n$-Mackey functor $\m M$ and subgroup $H\subset C_n$, the Weyl group $W_H(C_n)=C_n/H$ acts on $\m M(C_n/H)$, the $C_n$-Mackey functor $\m M$ evaluated at the orbit $C_n/H$. Therefore we can define
a levelwise $g$-action on a $C_n$-Mackey functor $\m M$ by passing to the Weyl group, i.e.,
\[
g(C_n/H): \m{M}(C_n/H)\xrightarrow{[g]\in C_n/H} \m{M}(C_n/H), \forall H\subset C_n.
\]
This levelwise action is compatible with transfers and restrictions. Therefore it assembles into a $g$-action on the $C_n$-Mackey functor, as well as on a simplicial $C_n$-Mackey functor. Thus for a simplicial Mackey functor $\m{M}_\bullet$,  this defines an object $(\m {M}_\bullet,g)$ in $s\mathsf{Mack}_{C_n, \cat{aut}}.$
\end{enumerate}

In both cases, an object in $\cat V$ gives rise to an object in $\Vaut,$ where the automorphism is determined by $g.$
\end{exmps}

Note that the choice of an automorphism of an object $X$ in $\cat V$ is equivalent to the choice of an action of the infinite cyclic group $C_\infty$ on $X$.  More formally,  $\Vaut$ is isomorphic to  $\cat V^{\Sigma C_\infty}$, the category of functors into $\cat V$ from the one-object category $\Sigma C_\infty$ whose morphism set is $C_\infty$.  This identification enables us to prove easily that $\Vaut$ also inherits model structure from $\cat V$, as follows.

The inclusion of the trivial group into $C_\infty$ induces two pairs of adjoint functors
\[
\xymatrix{\cat V\ar @<1ex>[rr]^{}\ar @<-1ex>[rr]^{}&&\Vaut, \ar [ll]^{}}
\]
where the functor from $\Vaut$ to $\cat V$ forgets the automorphism. If, for example, $\cat V$ is a combinatorial model category (i.e., cofibrantly generated and locally presented), then by \cite[Theorem 3.4.1]{hkrs}, \cite[Theorem 2.23]{bhkkrs}, and \cite[Theorem 11.6.1]{hirschhorn}, $\Vaut$ admits two combinatorial model structures for which the weak equivalences are created in $\cat V$: the \emph{left-induced (or injective)} model structure, denoted $\Vautinj$, in which the cofibrations are created in $\cat V$  and the \emph{right-induced (or projective)} model structure, denoted $\Vautproj$, in which the fibrations are created in $\cat V$.  

By \cite[Lemma 2.24, 4.21, 4.22]{bhkkrs}, when equipped with the lifted monoidal structure, $\Vautinj$ is a monoidal and simplicial model category, which is left (respectively, right) proper if $\cat V$ is.  The analogous results also hold for $\Vautproj$ by \cite[Theorem 11.7.3]{hirschhorn}, which enables us to conclude that $\Vautproj$ is a simplicial model category.  To show that  $\Vautproj$ is also a monoidal model category, one can first observe that the $\cat V$-object underlying the internal hom in $\Vaut$ endows the free-forgetful adjunction between $\cat V$ and $\Vaut$ with the structure of $\cat V$-enriched adjunction, then use the characterization of monoidal model categories in terms of (acyclic) fibrations \cite[Lemma 4.2.2]{hovey}.

It follows that all of the constructions and results above can be applied to $\big(\Vaut, \otimes,(S, \Id_S)\big)$ equipped with its injective or projective model structure, giving rise to a $\Ho (\Vaut)$-shadow, $\HH_{\Vaut}$, on $\sR_{\Vaut}$ with the properties above, such as Morita invariance (Corollary \ref{cor:morita}).  Agreement (Corollary \ref{cor:agreement}) holds as well, at least as long as $\Vaut$ is a locally presentable base, and we restrict to $\Vaut$-categories with cofibrant morphism objects.

\begin{rmk} 
\label{rmk:twisted_bimodule}
The categories $\cat {Mon}(\Vaut)$ of monoids in $\Vaut$ and $\cat {Mon}(\cat V)_{\textrm{aut}}$ of monoids in $\cat V$ equipped with monoid automorphisms are clearly isomorphic.
For any pair $(A,\vp)$ and $(B, \psi)$ of monoids  in $\Vaut$, we consider the category 
$$\mab {\cat {Mod}(\Vaut)}{(A,\vp)}{(B,\psi)}$$
of $\big((A,\vp)$,$(B,\psi)\big)$-bimodules in $\Vaut.$

It is straightforward to check that this category has the following explicit description.

An object of $\mab {\cat {Mod}(\Vaut)}{(A,\vp)}{(B,\psi)}$ consists of the following data.
\begin{itemize}
\item 
morphisms in $\cat V$:
$\lambda \colon A \otimes M \to M$, $\rho\colon M\otimes B \to M$, and $\gamma\colon M\to M$  such that
    \item $(M, \lambda, \rho)\in \Ob\mab {\cat {Mod}}AB$,
    \item $\gamma$ is an isomorphism, and
    \item the diagram
    \[ \xymatrix{A\otimes M \ar[d]_{\vp\otimes \gamma}\ar [r]^\lambda & M\ar [d]^\gamma&M\otimes B \ar[l]_\rho\ar[d]^{\gamma\otimes \psi}\\
        A\otimes M \ar [r]^\lambda & M&M\otimes B \ar[l]_\rho}
    \]
    commutes.
\end{itemize}
 The notation we use for an object in this category is $(\mab M\vp \psi, \gamma)$. Note that by definition, $\gamma$ can be viewed as an isomorphism of $(A,B)$-bimodules $\gamma: M^{\psi^{-1}}\xrightarrow{\cong} {^\vp\!M}$.
\end{rmk}

\begin{defn}
\label{def:phi-twisted_HH}
Let $(A, \vp)$ be a monoid in $\Vaut$, and let $(\mab M\vp \vp, \gamma)$ be an $(A, \vp)$-bimodule with left action $\lambda: A\otimes M\to M$ and right action $\rho: M\otimes A\to M$.
The \emph{$\vp$-twisted Hochschild homology} of $A$ with coefficients in $(\mab M\vp \vp, \gamma)$ is defined to be
\[
\HH ^\vp(A; (\mab M\vp \vp, \gamma)) =\HH_\cat V (A;{^\vp\!M}),
\]
where $^\vp\!M$ denotes the $A$-bimodule with left action $\lambda\circ(\vp\otimes\id): A\otimes M\to M$ and right action $\rho.$
\end{defn}

 Let $(A, \vp)$ be a monoid in $\Vaut$.
 The unit element $U(A,\vp)$ in $(A,\vp)$-bimodules is $(\mab A \mu \mu,\vp)$. We define the \emph{$\vp$-twisted Hochschild homology} of $A$ to be the $\vp$-twisted Hochschild homology of $A$ with these coefficients, denoted by
$\HH ^\vp(A)$.

\begin{exmp}
	We will see in Section \ref{sec:TwistedTHH} that when $A$ is a $C_n$-ring spectrum, equipped with the automorphism $\vp$ given by the action of a generator of $C_n$, the $\vp$-twisted Hochschild homology of $A$ is exactly twisted topological Hochschild homology (see Proposition \ref{prop:THH_case}). In Section \ref{subsec:HH_case} we will show that twisted Hochschild homology for Green functors also fits into this framework. 

\end{exmp}

We now show that Morita invariance holds for $\varphi$-twisted Hochschild homology.

\begin{prop}\label{cor:twisted_morita} Let $(A,\vp),(B,\psi)$ be two objects in $\cat{Mon}(\Vaut).$
Suppose the dual pair \\$\big((\mab M {\vp} {\psi},\gamma_M), (\mab N{\psi}{\vp},\gamma_N)\big)$ is a Morita equivalence in $\sR_{\Vaut}$.  For any $(A,\vp)$-bimodule $Q$, there is an isomorphism
\[
\HH^{\vp}(A;Q) \xrightarrow{\cong} \HH^{\psi}(B;N\odot Q\odot M).
\]
In particular, for $Q = (\mab A \mu \mu, \vp)$ the Euler characteristic
\[
\chi(M)\colon \HH^{\vp}(A;A) \to \HH^{\psi}(B;B)
\]
is an isomorphism.
\end{prop}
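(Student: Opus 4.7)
The plan is to deduce the statement from ordinary shadow Morita invariance (\autoref{cor:morita}) applied in $\sR_{\cat V}$, combined with an identification of bimodule structures supplied by $\gamma_N$. First, I would observe that forgetting the automorphism structure sends the Morita equivalence $\big((\mab M\vp\psi, \gamma_M), (\mab N\psi\vp, \gamma_N)\big)$ in $\sR_{\Vaut}$ to a Morita equivalence $(M,N)$ in $\sR_{\cat V}$: the evaluation and coevaluation $2$-cells witnessing the duality, along with their inverses, remain isomorphisms of plain $\cat V$-bimodules after forgetting. Applying \autoref{cor:morita} in $\sR_{\cat V}$ with the $(A,A)$-bimodule coefficients ${}^\vp Q$ then yields an isomorphism
\begin{equation*}
\HH_{\cat V}(A;\, {}^\vp Q) \xrightarrow{\cong} \HH_{\cat V}(B;\, N\odot {}^\vp Q \odot M)
\end{equation*}
in $\Ho \cat V$.

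The next step is to identify $N\odot {}^\vp Q\odot M$ with ${}^\psi(N\odot Q\odot M)$ as $(B,B)$-bimodules in $\cat V$. The key is that the compatibility square in \autoref{rmk:twisted_bimodule} is equivalent to saying that $\gamma_N$ is a $(B,A)$-bimodule isomorphism $N \xrightarrow{\cong} {}^\psi N^\vp$, or equivalently $N^{\vp^{-1}} \cong {}^\psi N$. Combined with the standard balancing identity $N\otimes_A {}^\vp Q \cong N^{\vp^{-1}}\otimes_A Q$, the map $\gamma_N\otimes \id\otimes\id$ induces, levelwise on the simplicial bar construction defining $\odot$ (see \autoref{def:bicat-V}), the required $(B,B)$-bimodule isomorphism, which then descends to the derived tensor product; symmetrically one could use $\gamma_M$ on the right. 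Composing the two isomorphisms gives
\begin{equation*}
\HH^\vp(A;Q) = \HH_{\cat V}(A;{}^\vp Q) \xrightarrow{\cong} \HH_{\cat V}\big(B;\, {}^\psi(N\odot Q\odot M)\big) = \HH^\psi(B;\, N\odot Q\odot M),
\end{equation*}
as required. The Euler characteristic claim follows by specializing to $Q = U_{(A,\vp)} = (\mab A\mu\mu,\vp)$ and using the Morita equivalence isomorphism $N\odot U_{(A,\vp)} \odot M \cong N\odot M \cong U_{(B,\psi)}$ to identify the target with $\HH^\psi(B;B)$.

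I expect the main obstacle to be the second step: verifying that the twist-transfer map built from $\gamma_N$ is a well-defined $(B,B)$-bimodule morphism on the simplicial bar construction used to compute $\odot$, and is natural in $Q$ and the Morita data. This reduces to checking compatibility with the face and degeneracy maps of the bar construction, which is exactly where the commutation relations encoded by $\gamma_M$ and $\gamma_N$ in \autoref{rmk:twisted_bimodule} are used in an essential way.
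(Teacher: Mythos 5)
Your argument is correct in outline, but it takes a genuinely different route from the paper. The paper works entirely inside $\sR_{\Vaut}$: it first proves \emph{cyclic invariance} of the twisted theory, $\HH^\vp(A;M\odot N)\cong \HH^\psi(B;N\odot M)$, via the Dennis--Morita--Waldhausen rotation together with a graded simplicial isomorphism $N\otimes\vp^{\otimes\bullet}\otimes M$ and the identification $\gamma_N\colon N^{\vp^{-1}}\xrightarrow{\cong}{}^\psi N$, and then deduces Morita invariance formally from the chain $Q\cong Q\odot U_A\cong Q\odot M\odot N$ followed by one rotation. You instead bypass twisted cyclic invariance: you push the Morita equivalence down to $\sR_{\cat V}$, invoke the untwisted shadow result (\autoref{cor:morita}) with coefficients ${}^\vp Q$, and then correct the twist afterwards by the isomorphism $N\odot{}^\vp Q\odot M\cong{}^\psi(N\odot Q\odot M)$ built from $\gamma_N$ and the balancing identity. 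Your twist-transfer step is sound and is essentially the same bar-level computation the paper performs (it is exactly where $\gamma_N$ and the monoid automorphism $\vp$ enter, and your check of faces and degeneracies mirrors the paper's verification). What your route buys is economy: you reuse established shadow machinery rather than redoing a rotation argument. What it costs is the unproved assertion at the start: that forgetting automorphisms defines a pseudofunctor $\sR_{\Vaut}\to\sR_{\cat V}$ compatible with $\odot$, so that dual pairs and Morita equivalences are preserved and the underlying $\cat V$-bimodule of the $\Vaut$-composite $N\odot Q\odot M$ agrees with the $\cat V$-composite; this requires checking that underlying objects of (bi)fibrant/cofibrant objects in $\Vaut$ retain the cofibrancy demanded of cells of $\sR_{\cat V}$ and that bar constructions, homotopy colimits, and fibrant replacements are compatible with the forgetful functor up to coherent weak equivalence --- routine but not free, and not established anywhere in the paper. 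The paper's route avoids this entirely, and moreover its intermediate output, cyclic invariance of $\HH^\vp$, is reused later (e.g.\ in \autoref{prop:fixedpointsshadow}, \autoref{cor-HH-Green-morita}, and \autoref{prop-TR-shadow}), so your shortcut would not fully replace the paper's proof in context.
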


\begin{proof}
It is clear that  {$\vp$-twisted Hochschild homology} extends to a functor 
\[
 \HH ^\vp(A; -) : \Ho\big(\mab {\cat {Mod}(\Vaut)}{(A,\vp)}{(A,\vp)}\big)\to \Ho \cat V
\] for each $(A,\vp)\in \cat{Mon}(\Vaut).$
	
	We start by proving cyclic invariance. For any $(A,\vp),(B,\psi)\in \cat{Mon}(\Vaut)$, let $(_\vp M_\psi, \gamma_M)$ be an $\big((A,\vp)\text,(B,\psi)\big)$-bimodule and $(_\psi N_\vp, \gamma_N)$  a $\big((B,\psi)\text,(A,\vp)\big)$-bimodule. As mentioned in Remark \ref{rmk:twisted_bimodule}, $\gamma_N$ can also be seen as an isomorphism from $N^{\vp^{-1}}$ to ${^\psi\!N}$.
	
	Observe that	
\begin{align*}
	\HH ^\vp(A; M\odot N)&= \HH_\cat V (A,^\vp\!\lvert \Bar(M;B;N)\lvert )\\
	&=\lvert ^\vp\!M\otimes B^{\otimes\bullet} \otimes N\otimes A^{\otimes\bullet}\lvert\\
	&\cong \lvert N\otimes A^{\otimes\bullet} \otimes {^\vp}\!M\otimes B^{\otimes\bullet}\lvert\\
	&\cong \lvert N^{\vp^{-1}}\otimes A^{\otimes\bullet} \otimes M\otimes B^{\otimes\bullet}\lvert\\
	&\cong \lvert ^\psi\!N\otimes A^{\otimes\bullet} \otimes M\otimes B^{\otimes\bullet}\lvert=\HH ^\psi(B; N\odot M),
\end{align*}
where the first isomorphism follows from the Dennis-Morita-Waldhausen argument, the second from the graded isomorphism
$$N\otimes \vp ^{\otimes \bullet}\otimes M : N\otimes A^{\otimes\bullet} \otimes {^\vp}\!M\otimes  \xrightarrow{\cong} N^{\vp^{-1}}\otimes A^{\otimes\bullet} \otimes M,$$
 which can easily be shown to commute with face and degeneracy maps, and the third  from the isomorphism $\gamma_N: N^{\vp^{-1}}\xrightarrow{\cong}{^\psi\!N}.$
 
 Having established cyclic invariance, we consider the composite 
 \[
 \HH^{\vp}(A;Q) \xrightarrow{\cong} \HH^{\vp}(A;Q \odot U_A) \xrightarrow{\cong}\HH^{\vp}(A;Q \odot M \odot N) \xrightarrow{\cong} \HH^{\psi}(B;N\odot Q\odot M).
 \]
 The first two isomorphisms are from the definition of a Morita equivalence, and the third isomorphism is cyclic invariance. This gives the desired Morita invariance statement.
\end{proof}

\begin{rmk}\label{rmk:notashadow}
We observe that although the proof of Proposition  \ref{cor:twisted_morita} shows that $\vp$-twisted Hochschild homology satisfies cyclic invariance, it is \emph{not} a shadow. Indeed, the diagram in Definition \ref{defn:shadow} relating the cyclic invariance map, $\theta$, and the associator, $a$, does not commute. However, as shown above, cyclic invariance is sufficient to prove Morita invariance.
\end{rmk}

\subsection{Twisted topological Hochschild homology}\label{sec:TwistedTHH}
A key example that fits into the framework described above is twisted topological Hochschild homology. In \cite{AnBlGeHiLaMa}, Angeltveit, Blumberg, Gerhardt, Hill, Lawson, and Mandell defined a generalization of topological Hochschild homology for $C_n$-equivariant ring spectra. In this section we use the framework developed above to prove that twisted THH is Morita invariant. We begin by recalling the definition of $C_n$-twisted THH for $C_n$-ring spectra. 

The definition uses the $C_n$-twisted cyclic bar construction $B_{\bullet}^{\text{cyc}, C_n}$, which produces a simplicial spectrum defined as follows. Let $R$ be a $C_n$-ring spectrum, and let $M$ be an $R$-bimodule with left $R$-action $\lambda$ and right $R$-action $\rho$.
The $q$-th level of the simplicial object $B_{\bullet}^{\text{cyc}, C_n}(R;M)$ is
$$B_{q}^{\text{cyc}, C_n}(R;M) = M\wedge R^{\wedge q},$$
with the face and degeneracy maps  given by $$
d_i = \left\{\begin{array}{ll} 
\rho \wedge \Id^{\wedge (q-1)} &: i=0,\\
\Id^{\wedge i} \wedge \mu \wedge \Id^{\wedge (q-i-1)} &: 0< i<q \\  (\lambda \wedge \Id^{\wedge (q-1)}) \circ \alpha_q &: i=q,
\end{array}\right.
$$
and 
$$
s_i =  \Id^{\wedge (i+1)} \wedge \eta \wedge \Id^{\wedge (q-i)} \hspace{1cm}\forall\,  0\leq i \leq q,
$$
where $\mu$ is the multiplication map, $\eta$ is the unit map, and $\alpha_q$ is the map that cyclically permutes the last factor to the front and then acts on the new first factor by $g$. 

When $M=R$ with the canonical $R$-bimodule structure, we write $B_{\bullet}^{\text{cyc}, C_n}(R)$ for this twisted cyclic bar construction. In this case, the $C_n$-twisted cyclic bar construction admits the structure of a $\Lambda_n^{\op}$-object in the sense of B\"okstedt-Hsiang-Madsen \cite{BHM}, whence, as observed in \cite{BHM}, the geometric realization of the $C_n$-twisted cyclic bar construction of $R$, $|B_{\bullet}^{\text{cyc}, C_n}(R)|$, admits an $S^1$-action. In \cite{AnBlGeHiLaMa} the authors define $C_n$-twisted topological Hochschild homology using this twisted cyclic bar construction as follows.

\begin{defn}
Let $U$ be a complete $S^1$-universe and let $\widetilde{U} = \iota^*_{C_n} U$ be the pullback of the universe $U$ to $C_n$. Let $R$ be an associative orthogonal $C_n$-ring spectrum indexed on $\widetilde{U}.$ The $C_n$-twisted topological Hochschild homology of $R$ is defined to be the norm $N_{C_n}^{S^1}(R)$, which is given by 
\[
\THH_{C_n}(R) = \mathcal{I}_{\mathbb{R}^{\infty}}^U |B_{\bullet}^{\text{cyc}, C_n}(\mathcal{I}_{\widetilde{U}}^{\mathbb{R}^{\infty}} R)|,
\]
where $\mathcal I$ denotes a change-of-universe functor.
\end{defn}

In this paper, we extend the definition of $C_n$-twisted topological Hochschild homology to $C_n$-twisted topological Hochschild homology with coefficients. 

\begin{defn}
Let $R$ be an associative orthogonal $C_n$-ring spectrum, and let $M$ be an $R$-bimodule. The \emph{$C_n$-twisted topological Hochschild homology of $R$ with coefficients in $M$} is defined to be the following 
$$
\THH_{C_n}(R;M) = |B^{\text{cyc}, C_n}_{\bullet}( R;M)|.
$$
\end{defn}

Note that $C_n$-twisted topological Hochschild homology with coefficients is in general  not an $S^1$-spectrum, but rather a $C_n$-spectrum. Throughout this article, we are considering $C_n$-twisted topological Hochschild homology as a $C_n$-spectrum. This version of $C_n$-twisted topological Hochschild homology is an example of a twisted Hochschild homology theory as in Definition \ref{def:phi-twisted_HH}. In this case, let $(\cat V ,\otimes, S)$ be the category of orthogonal $C_n$-equivariant spectra.

\begin{prop}
\label{prop:THH_case}
 Let $R$ be an orthogonal $C_n$-ring spectrum, and let $g$ be a generator of $C_n$. The generator $g$ defines an automorphism of $R$, and there is a natural isomorphism between the $C_n$-twisted THH of $R$ (as a $C_n$-spectrum) and the $g$-twisted Hochschild homology of $R$:
	\[
\THH_{C_n}(R)\cong \HH ^g(R).
\]

\end{prop}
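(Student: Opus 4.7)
The plan is to prove the proposition by exhibiting an isomorphism of simplicial $C_n$-spectra between the $C_n$-twisted cyclic bar construction $B_\bullet^{\mathrm{cyc},C_n}(R)$ and the ordinary cyclic bar construction $B_\bullet^{\mathrm{cyc}}(R;{}^g\!R)$ computing $\HH^g(R)$. Both simplicial spectra have $R \wedge R^{\wedge q}$ at level $q$, and both inherit a diagonal $C_n$-action from the $C_n$-structure on $R$, so the candidate isomorphism is the identity at every level; only the compatibility with face maps requires verification.

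First, I would unpack the two sides. By definition, $\HH^g(R) = \HH_{\cat V}(R;{}^g\!R) = |B_\bullet^{\mathrm{cyc}}(R;{}^g\!R)|$, where $R$ is viewed as an $R$-bimodule via multiplication $\mu\colon R\wedge R \to R$ and $^g\!R$ denotes the $R$-bimodule obtained by precomposing the left action with the automorphism $g\colon R\to R$. The $q$th level of $B_\bullet^{\mathrm{cyc}}(R;{}^g\!R)$ is $R\wedge R^{\wedge q}$, and its last face map is the composite
\[
R \wedge R^{\wedge q} \xrightarrow{\;\tau\;} R\wedge R\wedge R^{\wedge(q-1)} \xrightarrow{\;(\mu\circ(g\wedge\Id))\wedge\Id^{\wedge(q-1)}\;} R\wedge R^{\wedge(q-1)},
\]
where $\tau$ is the cyclic permutation moving the last factor to the front. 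All other faces and the degeneracies are given by $\mu$, $\rho$, and the unit of $R$ in the standard positions.

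Next I would compare this with the faces and degeneracies of $B_\bullet^{\mathrm{cyc},C_n}(R)$ recalled in the excerpt. The inner faces $d_i$ for $0<i<q$ and the face $d_0$ are identical to those above, as are the degeneracies. For the last face, the twisted construction uses $d_q = (\lambda \wedge \Id^{\wedge(q-1)})\circ\alpha_q$, where $\alpha_q$ first performs the cyclic permutation $\tau$ and then acts on the new first factor by $g$; since $R$ is the canonical bimodule over itself, $\lambda = \mu$, so
\[
d_q = (\mu\wedge\Id^{\wedge(q-1)})\circ(g\wedge\Id^{\wedge q})\circ\tau = \big((\mu\circ(g\wedge\Id))\wedge\Id^{\wedge(q-1)}\big)\circ\tau,
\]
which is precisely the last face of $B_\bullet^{\mathrm{cyc}}(R;{}^g\!R)$. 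Thus the identity levelwise assembles into an isomorphism of simplicial $C_n$-spectra, and passing to geometric realizations yields the desired isomorphism $\THH_{C_n}(R) \cong \HH^g(R)$ in $\Ho(\cat{Sp}^{C_n})$.

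The main obstacle is bookkeeping: one must verify that all three choices of convention line up, namely (i) that the automorphism $\vp$ induced on $R$ by the generator $g$ (as described in Example \ref{exmp:Vaut}(1)) is precisely the $g$ appearing in $\alpha_q$, (ii) that the twisted module $^g\!R$ of Definition~\ref{def:phi-twisted_HH} absorbs the $g$-factor in $\alpha_q$ into the left action exactly, and (iii) that the underlying $C_n$-actions on each level agree, which they do because both are the diagonal $C_n$-action on $R^{\wedge(q+1)}$ inherited from the $C_n$-spectrum structure on $R$. Once these identifications are established, the naturality of the isomorphism in $R$ is immediate from the naturality of every map in sight.
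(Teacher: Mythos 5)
Your proposal is correct and follows essentially the same route as the paper: the paper's proof likewise identifies the underlying $C_n$-spectrum of $\THH_{C_n}(R)$ with $|B^{\mathrm{cyc}}_\bullet(R;{}^g\!R)|$ and notes that the identification is already an isomorphism of simplicial $C_n$-spectra, which is exactly the levelwise comparison of face and degeneracy maps you spell out (the paper simply leaves that bookkeeping implicit).
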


\begin{proof}

By definition, $\THH_{C_n}(R)$ is an $S^1$-spectrum. Its underlying $C_n$-spectrum is the geometric realization of the cyclic bar construction $B^{\text{cyc}}_{\bullet}(R, {}^g\!R )$, where ${}^g\!R$ denotes the $R$-bimodule $R$, with the left action twisted by $g: R\to R,$ as in Definition \ref{def:phi-twisted_HH}. It's easy to see that it is an isomorphism on the level of simplicial spectra.

 \end{proof}

Having established that twisted THH fits into the general framework developed in Section \ref{Section:twisted}, Morita invariance then follows immediately from Proposition \ref{cor:twisted_morita}; we consider the bicategory $\sR_{\cat {Sp}^{C_n}}$, a special case of the bicategory $\sR_{\cat V}$ in Definition \ref{def:bicat-V} with $\cat V = \cat{Sp}^{C_n}$, the category of orthogonal $C_n$-spectra (see Example \ref{exmp:Vaut}(1); in particular, Assumption \ref{assumption:oneobj} is satisfied).

We now unpack the notions of Morita equivalence and Morita invariance in this setting. It follows from Definition \ref{def:Morita} that $C_n$-ring spectra $R$ and $S$ are Morita equivalent if there exists an $(R,S)$ $C_n$-bimodule $M$ and an $(S,R)$ $C_n$-bimodule $N$ so that $(M,N)$ and $(N,M)$ are dual pairs, and the evaluation and coevaluation maps of these dual pairs give equivalences of $C_n$-spectra
\[
M\wedge^{\mathbb{L}}_{S}N \simeq R \hspace{.5cm} \textup{and} \hspace{.5cm} N\wedge^{\mathbb{L}}_{R}M \simeq S.
\]

\begin{thm}\label{cor-twisted-thh-morita}
Twisted topological Hochschild homology, $\THH_{C_n}$, is Morita invariant. In other words, for Morita equivalent $C_n$-ring spectra $R$ and $S$, $\THH_{C_n}(R) \simeq \THH_{C_n}(S).$
\end{thm}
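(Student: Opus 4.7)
The plan is to deduce the theorem as a direct application of Proposition \ref{cor:twisted_morita}, bridging between the twisted THH of $C_n$-ring spectra and the $\varphi$-twisted Hochschild homology framework via Proposition \ref{prop:THH_case}. Since the underlying category $\cat V = \cat{Sp}^{C_n}$ is verified to satisfy Assumption \ref{assumption:oneobj} in Example \ref{exmp:Vaut}(1), all of the general machinery applies.

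First, I would observe that every $C_n$-ring spectrum $R$ gives rise to a monoid in $\Vaut = (\cat{Sp}^{C_n})_{\mathrm{aut}}$, namely $(R, g)$, where $g$ denotes the automorphism of $R$ given by left multiplication by a chosen generator of $C_n$ (cf.\ Example \ref{exmp:Vaut}(1)). Similarly, $(S, g)$ is a monoid in $\Vaut$. The next step is to promote a Morita equivalence between $R$ and $S$ as $C_n$-ring spectra to a Morita equivalence between $(R,g)$ and $(S,g)$ in $\sR_{\Vaut}$: given an $(R,S)$-bimodule $M$ and an $(S,R)$-bimodule $N$ in $\cat{Sp}^{C_n}$ realizing the Morita equivalence, multiplication by $g$ provides isomorphisms $\gamma_M\colon M\to M$ and $\gamma_N\colon N\to N$ compatible with the bimodule actions in the sense of Remark \ref{rmk:twisted_bimodule}, so that $(\mab M g g,\gamma_M)$ and $(\mab N g g,\gamma_N)$ are 1-cells of $\sR_{\Vaut}$. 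The evaluation and coevaluation maps of the original dual pair are $C_n$-equivariant by hypothesis, hence commute with the $g$-action, and therefore lift to 2-cells in $\sR_{\Vaut}$, exhibiting $\big((\mab M g g,\gamma_M),(\mab N g g,\gamma_N)\big)$ as a Morita equivalence in $\sR_{\Vaut}$.

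With this lift in hand, Proposition \ref{cor:twisted_morita} (applied to the unit coefficients) yields an isomorphism
\[
\HH^g(R) \xrightarrow{\cong} \HH^g(S)
\]
in $\Ho(\cat{Sp}^{C_n})$. By Proposition \ref{prop:THH_case}, $\HH^g(R)\cong \THH_{C_n}(R)$ and $\HH^g(S)\cong \THH_{C_n}(S)$ as $C_n$-spectra, and composing these identifications with the above isomorphism completes the proof.

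The most delicate step is the second one: verifying that a Morita equivalence of $C_n$-ring spectra lifts to a Morita equivalence in the auxiliary category $\sR_{\Vaut}$ of objects with automorphisms. The key point is that nothing new needs to be constructed, because the $C_n$-equivariance of the bimodules and of the evaluation and coevaluation maps already encodes compatibility with the generator $g$. All remaining bookkeeping—checking that $\gamma_M$ satisfies the diagram of Remark \ref{rmk:twisted_bimodule} with $\varphi = \psi = g$, and that the triangle identities survive—is routine and follows from the fact that the forgetful functor $\sR_{\Vaut}\to \sR_{\cat{Sp}^{C_n}}$ is faithful.
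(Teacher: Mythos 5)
Your proposal is correct and follows essentially the same route as the paper: the paper also deduces the theorem by combining Proposition \ref{prop:THH_case} with Proposition \ref{cor:twisted_morita}, working in $\sR_{\cat{Sp}^{C_n}}$ with $\cat V=\cat{Sp}^{C_n}$. The only difference is that you spell out the lifting of the Morita equivalence data to $\sR_{\Vaut}$ via the canonical $g$-automorphisms (using that all $C_n$-equivariant maps commute with $g$), a step the paper treats as immediate from Example \ref{exmp:Vaut}(1).
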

\vspace{.3cm}

As noted in Remark \ref{rmk:notashadow}, although twisted topological Hochschild homology satisfies cyclic invariance, it is not a shadow as the coherence axioms for shadows do not hold. However, we show below that upon taking fixed points, it \emph{is} a shadow functor. This observation will be important in our later study of topological restriction homology.
\begin{prop}\label{prop:fixedpointsshadow}
The $C_n$-fixed points of twisted topological Hochschild homology, $\THH_{C_n}(R;-)^{C_n},$ extend to a shadow on the bicategory $\sR_{\cat{Sp}^{C_n}}$ of $C_n$-ring spectra and their bimodules. 
\end{prop}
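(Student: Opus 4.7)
The plan is to apply derived categorical $C_n$-fixed points to the cyclic invariance structure on $\THH_{C_n}$ built in Proposition \ref{cor:twisted_morita} (using the identification of Proposition \ref{prop:THH_case}), exploiting the fact that the obstruction to the shadow coherence axioms noted in Remark \ref{rmk:notashadow} is itself a $C_n$-action that vanishes on $C_n$-fixed points. First, since $\THH_{C_n}(R;-)$ is functorial and derived $C_n$-fixed points preserve weak equivalences of fibrant $C_n$-spectra, the composite
\[
\THH_{C_n}(R;-)^{C_n}\colon \Ho\big(\mab{\cat{Mod}(\cat{Sp}^{C_n})}{R}{R}\big) \to \Ho(\cat{Sp})
\]
is a well-defined functor for each $C_n$-ring spectrum $R$. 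For bimodules $M\colon R \to S$ and $N\colon S \to R$, applying $(-)^{C_n}$ to the $C_n$-equivariant cyclic invariance isomorphism of Proposition \ref{cor:twisted_morita} yields the required natural isomorphism
\[
\theta_{R,S}^{C_n}\colon \THH_{C_n}(R; M \odot N)^{C_n} \xrightarrow{\cong} \THH_{C_n}(S; N \odot M)^{C_n},
\]
with naturality inherited from the unfixed case.

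The remaining and most substantive step is the verification of the coherence diagrams of Definition \ref{defn:shadow}. In the associativity hexagon with bimodules $M, N, P$, the top composite applies $\theta$ once (cycling $M \odot N$ past $P$), while the bottom composite applies $\theta$ twice (cycling $M$ and then $N$ separately). Each application of $\theta$ introduces $g$-twists according to the explicit formula for $\theta$ in the proof of Proposition \ref{cor:twisted_morita}. The key claim is that, after tracking these twists in the three-bimodule setting, the difference between the two composites is precisely the action of a power of $g$ on the twisted cyclic bar construction, induced by the Connes-type cyclic operator (equivalently, by the restriction along $C_n \hookrightarrow S^1$ of the $S^1$-rotation when applicable). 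Since $C_n$ acts trivially on its own fixed points, the hexagon commutes on $(-)^{C_n}$. The unit coherence triangle is verified by a parallel but simpler argument: the relevant composites of $\theta$ with unitors differ by a $g$-action that is again killed on $C_n$-fixed points.

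The main obstacle is carrying out the identification of the coherence failure with an explicit power of $g$. This requires careful bookkeeping of the placement and number of $\vp$-twists introduced by the cyclic permutation step of Proposition \ref{cor:twisted_morita} along each composite in the hexagon, and then recognizing the cumulative discrepancy as the cyclic-operator-induced $C_n$-action on $\THH_{C_n}$. Once this matching is established, triviality of the $C_n$-action on $C_n$-fixed points immediately closes the argument.
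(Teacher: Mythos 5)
Your overall strategy is the one the paper uses: apply $(-)^{C_n}$ to the cyclic invariance isomorphisms of Proposition \ref{cor:twisted_morita}, and argue that the failure of the coherence diagrams of Definition \ref{defn:shadow} is an action of $g$ that dies on $C_n$-fixed points. However, the step you flag as the ``main obstacle'' is in fact the entire content of the proof, and the identification you propose for the coherence failure is not available. You attribute the discrepancy to ``the Connes-type cyclic operator (equivalently, the restriction along $C_n\hookrightarrow S^1$ of the $S^1$-rotation),'' but the hexagon involves three different $C_n$-ring spectra $A,B,C$ and bimodules $M,N,P$: the twisted bar constructions appearing there, such as $B^{\mathrm{cyc}}_\bullet(A;{}^g((M\odot N)\odot P))$, are merely simplicial objects, not cyclic objects, so there is no Connes operator or $S^1$-action to invoke. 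The $C_n$-action with respect to which fixed points are taken is the diagonal one coming from the fact that every smash factor is a $C_n$-spectrum, and it is this action that controls the discrepancy.

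The correct bookkeeping (which the paper carries out) is as follows: each $\theta$ is ``rotate, then act by $g$ on specified factors'' --- the top $\theta$ acts by $g$ on all the $A$-factors and on $P$, the bottom $\theta$ on the $A$-factors and on the whole $\Bar(N;C;P)=N\odot P$ factor, and the vertical $\theta$ on the $B$-factors and on the $\Bar(P;A;M)=P\odot M$ factor. The underlying rotations of the two composites agree (coherence of the untwisted THH shadow), and comparing the accumulated twists one finds the two composites differ by $g$ applied once to \emph{every} smash factor, i.e., by the diagonal action of $g\in C_n$ on the ambient $C_n$-spectrum; similarly for the unit triangle. It is this diagonal action --- not a rotation --- that induces the identity after taking $C_n$-fixed points, which is what closes the argument. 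So your proposal has the right shape, but as written it both defers the essential verification and rests it on a structure (the cyclic/$S^1$-action) that does not exist for general coefficients; replacing that with the factor-by-factor tracking of $g$-twists, and observing that the total discrepancy is the ambient diagonal $g$-action, is what is actually needed.
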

\begin{proof}
We have proven cyclic invariance of $\THH_{C_n}$ in Proposition \ref{cor:twisted_morita}, and therefore the $C_n$-fixed points also satisfy cyclic invariance. It remains to show that the coherence properties hold. Let $A, B, C$ be $C_n$-ring spectra, $M$ an $(A,B)$-bimodule, $N$ a $(B,C)$-bimodule, and $P$ a $(C,A)$-bimodule. Let $g$ be the chosen generator of $C_n$. We study the hexagon diagram

\[
\xymatrix{\THH(A; {}^g(M\odot N)\odot P) \ar[r]^{\theta}\ar[d]_{\Sh{}{a}}& \THH(C; {}^g P\odot (M\odot N))
 \ar[r]^{\Sh{}{a}} & \THH(C; {}^g (P\odot M)\odot N) \\
\THH(A; {}^g M\odot (N\odot P)) \ar[r]^{\theta} & \THH(B; {}^g ( N\odot P)\odot M) \ar[r]^{\Sh{}{a}} & \THH(B; {}^g N\odot (P\odot M)) \ar[u]_{\theta}
}
\]

As in the proof of Proposition \ref{cor:twisted_morita}, the cyclic isomorphism $\theta$ in the top row is given by rotating the smash factors in the cyclic bar construction, and acting by $g$ on all of the $A$ and $P$ smash factors. Similarly, the cyclic isomorphism $\theta$ in the bottom row is given by rotating the smash factors in the cyclic bar construction, and acting by $g$ on the $A$ smash factors and the $\Bar{}(N;C;P) = N \odot P$ smash factor. Finally, the vertical $\theta$ is given by acting by $g$ on all $B$ smash factors, and on the $\Bar{}(P;A;M) = P \odot M$ smash factor. Upon taking $C_n$-fixed points, the action of $g$ becomes trivial, and the diagram commutes. 

\medskip 

Let $M$ be an $(A,A)$-bimodule. A similar analysis shows that upon taking $C_n$-fixed points, the diagram

\[
\xymatrix{\THH(A; {}^g M\odot U_A) \ar[r]^{\theta} \ar[dr]_-{\Sh{}{r}} & \THH(A; {}^g U_A \odot M) \ar[r]^{\theta} \ar[d]^{\Sh{}{l}} & \THH(A; {}^g M \odot U_A) \ar[dl]^{\Sh{}{r}} \\
& \THH(A; {}^g M) &
}
\]
also commutes.
\end{proof}
\begin{rmk}
While twisted topological Hochschild homology is not itself a shadow functor, we expect that it does fit into a framework of \emph{equivariant shadows}. We will return to the development of such a framework in subsequent work. The Hochschild homology for Green functors, described in Section \ref{subsec:HH_case} below, should similarly fit into the framework of an equivariant shadow.
\end{rmk}

\subsection{Hochschild homology for Green functors}
\label{subsec:HH_case}
The theory of $C_n$-twisted topological Hochschild homology has an algebraic analogue, a theory of Hochschild homology for Green functors. The twisted Hochschild homology of Green functors was developed in \cite{BlGeHiLa} and further studied in \cite{aghkk}. Below, we show that this Hochschild homology for Green functors can also be interpreted as a twisted Hochschild homology theory as in Definition \ref{def:phi-twisted_HH}.

One ingredient in the definition of twisted Hochschild homology for Green functors is the notion of norms for a Mackey functor. For $H\subset G$, where $G$ is finite, the norm functor $N_H^G$ maps an $H$-Mackey functor to a $G$-Mackey functor. It is defined to be a left Kan extension along the coinduction between the Burnside categories (\cite[2.3.2]{Hoy}) and is compatible with the norm functor for equivariant spectra.

Let $G$ be a finite cyclic group.
As discussed in Example \ref{exmp:Vaut},
for any $G$-Green functor $\m R$, a generator $g\in G$ defines an automorphism of $\m R$ by first passing to the Weyl group and then acting via the Weyl group action. 
\begin{defn}\label{twistaction}
Let $G$ be a finite cyclic group, and let $g \in G$. Let $\m{R}$ be a Green functor for $G$, and let $\m{M}$ be a left
$\m{R}$-module with action map $\lambda$. Let $\gmM$ denote $\m{M}$ with the module structure twisted by $g$. In other words the action map ${}^{g}\!\lambda$ is specified by the commuting diagram
$$
\xymatrix{\m{R} \Box \m{M} \ar[d]_{g \Box 1} \ar[dr]^{{}^{g}\!\lambda} & \\
\m{R} \Box \m{M} \ar[r]^{\lambda} & \m{M}.}
$$
\end{defn}

We can now define the twisted cyclic nerve. 
\begin{defn}
For an $\m{R}$-bimodule $\m{M}$, the \emph{twisted cyclic nerve} of $\m R$ with coefficients $\gmM$, denoted by $\HC^G_\bullet (\m R;{}^g\!\m M)$, is a simplicial Mackey functor with $q$ simplices given by
\[
\HC^G_q (\m R;{}^g\!\m M) = \gmM \Box\m{R}^{\Box q}.
\]
The face maps $d_i$ are given as usual by multiplication of the $i$th and $(i + 1)$st factors if $0 < i < q$.
The face map $d_0$ is the ordinary right module action map for $\m{M}$, while the last face map, $d_q$, rotates the last factor to the front and then uses the twisted left action map of Definition \ref{twistaction}.
The degeneracy maps $s_i$ are induced by inclusion of the unit after the $i$th factor, for $0 \leq i \leq q$. When the inputs $\m R$ and  $\m M$ are $H$-Green functors for a subgroup $H\subset G$, we define the \emph{$G$-twisted
cyclic nerve relative to $H$} to be the simplicial Mackey functor 
\[\m \HC^G_H (\m R;{}^g\!\m M)_\bullet:=\m \HC^G_\bullet(N_H^G\m R; {}^g\!N_H^G\m M).
\]
\end{defn}

We now recall the definition of twisted Hochschild homology for Green functors from \cite{BlGeHiLa}, extending the original definition to include coefficients. Recall that the homology of a simplicial Mackey functor is the homology of the associated normalized dg Mackey functor, given by postcomposing with the Dold-Kan equivalence from simplicial
abelian groups to chain complexes.

\begin{defn}
Let $H\subset G$ be finite cyclic groups. Let $\m{R}$ be a Green functor for $H$, and let $\m M$ be an $\m R$-bimodule. The \emph{$G$-twisted Hochschild homology of $\m{R}$ with coefficients in $\m M$} is defined to be the homology of the twisted cyclic nerve
$$
\m{\HH}_{H}^G(\m{R};\m M)_i = H_i(\m \HC_H^G(\m{R};{}^g\!\m{M})_\bullet).
$$
When  $\m{M} = \m R$ with the canonical bimodule structure, we write $\m{\HH}_{H}^G(\m{R})$ for $\m{\HH}_{H}^G(\m{R}, \m{R})$,  the \emph{$G$-twisted Hochschild homology of $\m{R}$}. When $H=G$, we abbreviate the notation and write $\m{\HH}^G_i(\m{R};\m M)$ and $\m{\HH}^G_i(\m{R}).$
\end{defn}

To interpret $G$-twisted Hochschild homology as the $g$-twisted Hochschild homology defined in Definition \ref{def:phi-twisted_HH}, we work in the category of simplicial Mackey functors. 
The category of simplicial Mackey functors, denoted by $s\Mack$, is a symmetric monoidal simplicial model category by \cite[Theorem 4.3]{BlGeHiLa}. Let $\m R$ be a $G$-Green functor and let $\m M$ be a $\m R$-bimodule. We view $\m R$ as a constant simplicial Green functor denoted by $\mathrm{const}(\m{R})$, and similar for $\m M$. The automorphism $g$ induces an automorphism of $\mathrm{const}(\m{R})$.
From Definition \ref{def:phi-twisted_HH},
$$\HH^g(\mathrm{const}(\m{R}); \mathrm{const}(\m{M}))\cong \lvert B^\text{cyc}_{\bullet}( \mathrm{const}(\m{R});{^g\!\mathrm{const}(\m{M})})\lvert.$$

The cyclic bar construction defines a bisimplicial Mackey functor.
Since $\Mack$ is a cocomplete category, for any bisimplicial Mackey functor $M_{\bullet,\bullet}$, the associated diagonal simplicial Mackey functor $d M_{\bullet,\bullet}$ is equivalent to the geometric realization $\lvert M_{\bullet,\bullet}\lvert $.\footnote{For simplicial sets, this equivalence is a standard result (see e.g. \cite[p. 210]{SHT}). For bisimplicial Mackey functors, a similar proof works by checking the diagonal has the same universal property as this geometric realization.} 
The diagonal simplicial Mackey functor associated to $B^\text{cyc}_{\bullet}( \mathrm{const}(\m{R});{^g\!\mathrm{const}(\m{M})})$ is $\m \HC_\bullet (\m R; {}^g\!\m M)$.
Hence we have 
$$\HH^g(\mathrm{const}(\m{R});\mathrm{const}(\m{M}))\cong \lvert B^\text{cyc}_{\bullet}( \mathrm{const}(\m{R});{^g\!\mathrm{const}(\m{M})})\lvert\cong \m \HC^G_\bullet (\m R; {}^g\!\m M).$$

Thus, the $G$-twisted Hochschild homology of Green functors is the homology of a $g$-twisted Hochschild homology.

We now consider the bicategory $\sR_{\cat {sMack}_{C_n}}$, which is an example of the bicategory $\sR_{\cat V}$ in Definition \ref{def:bicat-V} for $\cat V = \cat{sMack}_{C_n}$, the category of simplicial $C_n$-Mackey functors.

The framework developed here allows us to conclude the twisted Hochschild homology of Green functors is Morita invariant. Given two $H$-Green functors $\m R$ and $\m S$, we consider them as constant simplicial $H$-Green functors. Unwinding Definition \ref{def:Morita}, we see that $\m R$ and $\m S$ are Morita equivalent if there exist an $(\m R, \m S)$-bimodule $\m M \in s\text{Mack}_H$ and an $(\m S, \m R)$-bimodule $\m N \in s\text{Mack}_H$ so that $(\m M, \m N)$ and $(\m N,\m M)$ are dual pairs, and the evaluation and coevaluation maps of these dual pairs give equivalences of simplicial Mackey functors 

$$|\Bar (\m M;\m S; \m N)| \simeq \m R \hspace{.5cm} \textup{ and } \hspace{.5cm}|\Bar (\m N; \m R; \m M)| \simeq \m S$$

\begin{prop}\label{prop:MackeyMorita}
For a finite cyclic group $G$, the $G$-twisted Hochschild homology of $G$-Green functors, $\m \HH_i^G$, is Morita invariant. That is, if $G$-Green functors $\m R$ and $\m S$ are Morita equivalent in the sense defined above, then $\m \HH_i^G (\m R) \cong \m \HH_i^G (\m S)$ for all $i$.
\end{prop}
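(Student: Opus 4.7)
The plan is to deduce this statement directly from Proposition~\ref{cor:twisted_morita} applied to the specific symmetric monoidal simplicial model category $\cat V = \cat{sMack}_{C_n}$. First I would verify that this setup satisfies the hypotheses needed: Example~\ref{exmp:Vaut}(2) already records that $\cat{sMack}_{C_n}$ is a symmetric monoidal simplicial model category satisfying Assumption~\ref{assumption:oneobj}, and that a generator $g \in G$ equips every $G$-Green functor with a canonical automorphism via the Weyl-group action, so that constant simplicial $G$-Green functors $\mathrm{const}(\m R)$ and $\mathrm{const}(\m S)$ become monoids in $(\cat{sMack}_{C_n})_{\mathrm{aut}}$.

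Next I would translate the Morita equivalence hypothesis. A Morita equivalence between $\m R$ and $\m S$, together with their canonical $g$-automorphisms, gives a dual pair of 1-cells in $\sR_{(\cat{sMack}_{C_n})_{\mathrm{aut}}}$ in the sense of Remark~\ref{rmk:twisted_bimodule}: the bimodules $\m M$ and $\m N$ (viewed as constant simplicial bimodules) come equipped with the automorphisms $\gamma_{\m M}, \gamma_{\m N}$ induced by the Weyl-group $g$-action, and the required compatibility with the left and right actions holds because the action maps in $\sR_{\cat{sMack}_{C_n}}$ are morphisms of Mackey functors, hence $g$-equivariant. The evaluation and coevaluation equivalences $|\Bar(\m M; \m S; \m N)| \simeq \m R$ and $|\Bar(\m N; \m R; \m M)| \simeq \m S$ therefore lift to equivalences in $\sR_{(\cat{sMack}_{C_n})_{\mathrm{aut}}}$.

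Applying Proposition~\ref{cor:twisted_morita} to this Morita equivalence with $Q = U_{(\mathrm{const}(\m R), g)}$, the Euler characteristic then yields an isomorphism
\[
\HH^g\big(\mathrm{const}(\m R)\big) \xrightarrow{\cong} \HH^g\big(\mathrm{const}(\m S)\big)
\]
in $\Ho(\cat{sMack}_{C_n})$. By the discussion preceding the proposition, these $g$-twisted Hochschild homology objects are, up to equivalence of simplicial Mackey functors, the twisted cyclic nerves $\m\HC^G_\bullet(\m R; {}^g\!\m R)$ and $\m\HC^G_\bullet(\m S; {}^g\!\m S)$ respectively, via the identification of the geometric realization of a bisimplicial Mackey functor with its diagonal. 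Taking homology (via the Dold--Kan correspondence) of both sides produces the desired isomorphism $\m\HH^G_i(\m R) \cong \m\HH^G_i(\m S)$ for all $i$.

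The main obstacle I expect is the bookkeeping in the second step: making precise that the constant-simplicial bimodules $\m M$ and $\m N$ genuinely assemble into 1-cells of $\sR_{(\cat{sMack}_{C_n})_{\mathrm{aut}}}$ satisfying the commuting-square condition of Remark~\ref{rmk:twisted_bimodule} with respect to the Weyl-group automorphisms, and that the evaluation/coevaluation of the Morita equivalence are morphisms in this enriched bicategory rather than merely in $\sR_{\cat{sMack}_{C_n}}$. Once this naturality of the $g$-action is in hand, everything else is formal from Proposition~\ref{cor:twisted_morita} and the diagonal/realization comparison.
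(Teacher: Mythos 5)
Your proposal is correct and follows essentially the same route as the paper: the paper's proof is a two-line citation of Proposition~\ref{cor:twisted_morita} together with the identification $\m\HC^G_\bullet(\m R;{}^g\!\m R)\cong \HH^g(\mathrm{const}(\m R))$, followed by passing to homology. The only difference is that you explicitly spell out the step the paper leaves implicit, namely that the Morita equivalence in $\sR_{\cat{sMack}_{G}}$ lifts to $\sR_{(\cat{sMack}_{G})_{\mathrm{aut}}}$ because every map of Mackey functors is automatically equivariant for the canonical Weyl-group $g$-automorphisms; this is a correct and worthwhile clarification, not a divergence in method.
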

\begin{proof}
The result follows from Proposition \ref{cor:twisted_morita} and the identification that $\m \HC^G_\bullet (\m R; {}^g\!\m R)=\HH^g(\m{R})$ after passing to homology.
\end{proof}

We can further extend this result to the $G$-twisted Hochschild homology of an $H$-Green functor, where $H \subset G$ are finite cyclic groups.

\begin{thm}\label{cor-HH-Green-morita}
For a finite cyclic group $G$, and $H \subset G$, the $G$-twisted Hochschild homology for $H$-Green functors is Morita invariant. That is, if $H$-Green functors $\m R$ and $\m S$ are Morita equivalent in the sense defined above, then $\m \HH_{H}^G (\m R)_i \cong \m \HH_{H}^G (\m S)_i$ for all $i$.
\end{thm}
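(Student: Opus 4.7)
The plan is to reduce the statement to Proposition \ref{prop:MackeyMorita} (the case $H = G$) by means of the norm functor $N_H^G$. Recall that, by definition,
\[
\m\HH_H^G(\m R)_i \;=\; H_i\bigl(\m\HC^G_\bullet(N_H^G\m R;\,{}^g\!N_H^G\m R)\bigr) \;=\; \m\HH^G(N_H^G\m R)_i,
\]
so it suffices to show that the norm functor $N_H^G\colon \cat{Mack}_H \to \cat{Mack}_G$ carries a Morita equivalence of $H$-Green functors to a Morita equivalence of $G$-Green functors. Once this is established, the result follows by applying Proposition \ref{prop:MackeyMorita} to the pair $N_H^G\m R$ and $N_H^G\m S$.

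The key input is that $N_H^G$ is a strong symmetric monoidal functor \cite{Hoy}, so it sends monoids to monoids, bimodules to bimodules, and box products over $\m S$ to box products over $N_H^G\m S$. In particular, if $(\mab{\m M}{\m R}{\m S}, \mab{\m N}{\m S}{\m R})$ is a Morita equivalence of $H$-Green functors, then $N_H^G\m M$ is an $(N_H^G\m R, N_H^G\m S)$-bimodule, $N_H^G\m N$ is an $(N_H^G\m S, N_H^G\m R)$-bimodule, and the evaluation and coevaluation 2-cells of the dual pair are transported by $N_H^G$ to 2-cells satisfying the triangle identities in $\sR_{\cat{sMack}_G}$.

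To promote this to a Morita equivalence in the bicategorical sense used in \ref{def:bicat-V} and Proposition \ref{prop:MackeyMorita}, I would verify that $N_H^G$ commutes with the derived tensor product up to weak equivalence. Because $N_H^G$ is left Quillen with respect to the simplicial model structures used throughout (or at least preserves the relevant weak equivalences between pointwise cofibrant objects) and commutes with geometric realization of simplicial Mackey functors, one obtains a natural equivalence
\[
N_H^G\bigl(|\Bar(\m M;\m S;\m N)|\bigr) \;\simeq\; |\Bar(N_H^G\m M;\,N_H^G\m S;\,N_H^G\m N)|,
\]
and similarly for the other composite. Composing with the equivalences $|\Bar(\m M;\m S;\m N)|\simeq \m R$ and $|\Bar(\m N;\m R;\m M)|\simeq \m S$ witnessing the original Morita equivalence, and applying $N_H^G$, yields the required equivalences $|\Bar(N_H^G\m M;\,N_H^G\m S;\,N_H^G\m N)|\simeq N_H^G\m R$ and the symmetric statement.

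The main technical point, and the step I would be most careful about, is checking that $N_H^G$ preserves the bar constructions up to the correct equivalence and is compatible with the $g$-twisting used in the twisted cyclic nerve (so that ${}^g\!N_H^G\m M$ matches $N_H^G({}^g\!\m M)$ under the identification coming from the generator $g\in G$). Once this compatibility is in hand, the Morita equivalence in $\sR_{\cat{sMack}_H}$ pushes forward along $N_H^G$ to a Morita equivalence in $\sR_{\cat{sMack}_G}$, and an application of Proposition \ref{prop:MackeyMorita} concludes.
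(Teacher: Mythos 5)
Your argument is correct in substance, but it is organized differently from the paper's. You push the Morita equivalence \emph{forward} along $N_H^G$ — using that the norm is strong symmetric monoidal, preserves units ($N_H^G U_{\m R}=U_{N_H^G\m R}$), and commutes with geometric realization, so that $N_H^G(\m M\odot \m N)\cong N_H^G\m M\odot N_H^G\m N$ and dual pairs with invertible (co)evaluations are carried to dual pairs at the $G$-level — and then you quote Proposition \ref{prop:MackeyMorita} for $N_H^G\m R$ and $N_H^G\m S$. The paper instead keeps everything at the $H$-level: it uses exactly the same key computation (equation (\ref{eq:norm}), from strong monoidality plus preservation of sifted colimits) to show that the relative theory $\m\HH_H^G=\HH^g(N_H^G-;N_H^G-)$ inherits \emph{cyclic invariance} from the $G$-level theory, and then reruns the Morita argument of Proposition \ref{cor:twisted_morita} directly. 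The paper's route buys a little economy: cyclic invariance is weaker than full pseudofunctoriality of $N_H^G$ on the bimodule bicategories, so one need not verify unit comparison, transport of triangle identities, or that homotopy classes of 2-cells are preserved; it also matches the general principle recorded in the remark after the theorem. Your route buys a reusable intermediate statement (the norm preserves Morita equivalences of Green functors). Two smaller comments: the homotopy-invariance point you flag — that $N_H^G$ carries the weak equivalences witnessing the Morita data to weak equivalences — is indeed the delicate step, but it is needed equally (if implicitly) in the paper's proof, since $\m\HH_H^G(\m R;-)$ must descend to the homotopy category of coefficient bimodules; and your worry about compatibility of the norm with the $g$-twisting is actually unnecessary for your reduction, because in the definition of $\m\HC^G_H$ the twist is applied only \emph{after} norming, so it is entirely absorbed into the $G$-level statement of Proposition \ref{prop:MackeyMorita}.
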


\begin{proof}

Denote by $N^G_H: s\text{Mack}_H\to s\Mack$ the induced functor obtained by applying $N^G_H$ levelwise.

We first prove cyclic invariance. Let $\m{A}$ and $\m{B}$ be two simplicial $H$-Green functors.
Let $\m{M}$ be an $(\m{A},\m{B})$-bimodule and $\m{N}$ a $(\m{B},\m{A})$-bimodule. Note that $N_H^G\m{M}$ is an $(N_H^G\m{A},N_H^G\m{B})$-bimodule and similar for $N_H^G \m{N}.$

We have that 
\begin{equation}
    \label{eq:norm}
\begin{aligned}
	N^G_H (\m{M}\odot \m{N})
&\stackrel{\text{def}}{=} 
N_H^G \lvert \Bar (\m{M};\m{B}; \m{N})\lvert\\
    &\stackrel{\text{(1)}}{\cong} \lvert N_H^G \Bar (\m{M};\m{B}; \m{N})\lvert\\
    &\stackrel{\text{(2)}}{=} \lvert  \Bar (N_H^G \m{M};N_H^G \m{B}; N_H^G \m{N})\lvert
    \stackrel{\text{def}}{=} N_H^G\m{M} \odot N_H^G \m{N}.
\end{aligned}
\end{equation}
Isomorphism (1) holds since $N_H^G$ preserves sifted colimits, while isomorphism (2) holds since $N_H^G$ is strong symmetric monoidal.

By Propositions \ref{cor:twisted_morita} and \ref{prop:MackeyMorita}, $\HH^g$ satisfies cyclic invariance, and thus we have 
$$\HH^g(N_H^G\m{A}; N_H^G\m{M} \odot N_H^G \m{N})\simeq \HH^g(N_H^G\m{B}; N_H^G\m{N} \odot N_H^G \m{M}).$$
Combining with equation (\ref{eq:norm}), we have 
$$\HH^g(N_H^G\m{A}; N_H^G(\m{M} \odot \m{N}))\simeq \HH^g(N_H^G\m{B}; N_H^G(\m{N} \odot \m{M})).$$
Thus $\m \HH_{H}^G$ satisfies cyclic invariance. As in the proof of Proposition \ref{cor:twisted_morita} it then follows that $\m \HH_{H}^G$ is Morita invariant.

\end{proof}

\begin{rmk}
The proof of Theorem \ref{cor-HH-Green-morita} requires that the norm functor is strong symmetric monoidal and commutes with geometric realizations. In general, any such functor $F$ preserves cyclic invariance, and hence preserves Morita invariance as in Theorem \ref{cor-HH-Green-morita} above.
\end{rmk}

\begin{rmk} \label{rmk-many-obj-equivar}
Proceeding as in Section \ref{subsec:manyobj}, one can also define many-object versions of $\THH_{C_n}$ and $\m{\HH} ^{C_n}$. These will be $\HH_{\cat V _{\mathsf{aut}}\mathsf{Cat}}$ for $\cat V = \cat{Sp}^{C_n}$ and $\cat V = s\mathsf{Mack}_{C_n}$ respectively. To prove that these are Morita invariant via this approach, one would need to verify that Assumption \ref{assumption:manyobj} holds for these categories. We expect that $s\mathsf{Mack}_{C_n}$ is a locally presentable base, and therefore satisfies Assumption \ref{assumption:manyobj}; however, verifying Assumption \ref{assumption:manyobj} is likely to be more difficult for $\cat{Sp}^{C_n}$.
\end{rmk}

We finish this section by noting that although $\m{\HH}_H^{G}$ does not have the full structure of a shadow functor, when evaluated at the orbit $(G/G)$ it is indeed a shadow. This observation will play an important role in our study of algebraic restriction homology. 

\begin{prop}\label{prop-HH-Green-fp-shadow}  Let $G$ be a finite cyclic group, and $H \subset G$. On the bicategory of simplicial $H$-Green functors and bimodules, the $G$-twisted Hochschild homology evaluated at the orbit $G/G$, $\m \HH_{H}^{G}(\m{A};-)_i(G/G)$, extends to a shadow.
 \end{prop}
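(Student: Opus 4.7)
The plan is to carry over to the algebraic setting the strategy already used in Proposition \ref{prop:fixedpointsshadow}. The cyclic-invariance natural isomorphism
\[
\theta\colon \m\HH_H^G(\m A; \m M \odot \m N)_i(G/G) \xrightarrow{\cong} \m\HH_H^G(\m B; \m N \odot \m M)_i(G/G)
\]
was already constructed in the proof of Theorem \ref{cor-HH-Green-morita}. Thus the functoriality and cyclic-invariance portions of Definition \ref{defn:shadow} are in hand, and it remains to verify the coherence axioms: the hexagon relating $\theta$ with the associator, and the triangle relating $\theta$ with the unitors.

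The key observation mirrors the one used for the $C_n$-fixed points of $\THH_{C_n}$. At the level of the twisted cyclic nerve, $\theta$ is realized as the composite of a rotation of box factors in $\m\HC^G_\bullet$ with a $g$-twist applied to each factor carried past the cyclic seam. For a $G$-Mackey functor $\m X$, the $g$-twist is given levelwise by the Weyl-group action, i.e., by $[g] \in W_G(K) = N_G(K)/K$ acting on $\m X(G/K)$. Since $W_G(G)$ is trivial, the $g$-twist evaluates to the identity on $\m X(G/G)$.

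Proceeding vertex by vertex around the hexagon (where $\theta$ appears three times) and around the triangle (where $\theta$ appears twice), each occurrence of the $g$-twist therefore collapses to the identity after evaluation at $G/G$. What remains is the coherence of the associator and the unitors with the pure cyclic-rotation maps, which is exactly the hexagon and triangle satisfied by the untwisted Hochschild shadow $\HH_{s\mathsf{Mack}_G}$ established in Proposition \ref{prop:HH_is_shadow} and Remark \ref{rmk-HH-shadow}. The main subtlety, and thus the principal obstacle, lies in the book-keeping: one must verify carefully that each $g$-twist appearing in the diagrams is applied to a Mackey-functor factor whose Weyl-group action is trivial at $G/G$, so that simultaneous evaluation at $G/G$ indeed kills all twists at once and reduces each vertex of the hexagon to its untwisted counterpart.
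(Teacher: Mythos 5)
Your proposal is correct and follows essentially the same route as the paper: cite the cyclic invariance from Theorem \ref{cor-HH-Green-morita}, then check the hexagon and triangle coherence diagrams by observing that each occurrence of the $g$-twist is given by the Weyl group action, which is trivial after evaluation at $G/G$, so the diagrams reduce to the untwisted rotation coherences. The only cosmetic difference is that you make the final reduction to the untwisted shadow of Proposition \ref{prop:HH_is_shadow} explicit, which the paper leaves implicit.
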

 \begin{proof}
 We have shown cyclic invariance in Theorem \ref{cor-HH-Green-morita}; it remains to show that the coherence diagrams commute. Let $\m{A},\m{B},\m{C}$ be simplicial $H$-Green functors, $\m{M}$ an $(\m{A},\m{B})$-bimodule, $\m{N}$ a $(\m{B},\m{C})$-bimodule, and $\m{P}$ a $(\m{C},\m{A})$-bimodule. We study the hexagon diagram \\
 \adjustbox{scale=0.83}{$$
\xymatrix{|B^\text{cyc}_{\bullet}(N_H^G \m{A}; {}^g N_H ^G ((\m{M}\odot \m{N})\odot \m{P}))| \ar[r]^{\theta}\ar[d]_{\Sh{}{a}}& |B^\text{cyc}_{\bullet}(N_H ^G \m{C}; {}^g N_H ^G (\m{P}\odot (\m{M}\odot \m{N})))|
 \ar[r]^{\Sh{}{a}} & |B^\text{cyc}_{\bullet}(N_H ^G \m{C}; {}^g N_H ^G ((\m{P}\odot \m{M})\odot \m{N}))| \\
 |B^\text{cyc}_{\bullet}(N_H ^G \m{A}; {}^g N_H ^G (\m{M}\odot (\m{N}\odot \m{P})))| \ar[r]^{\theta} & |B^\text{cyc}_{\bullet}(N_H ^G \m{B}; {}^g N_H ^G (( \m{N}\odot \m{P})\odot \m{M}))| \ar[r]^{\Sh{}{a}} & |B^\text{cyc}_{\bullet}(N_H ^G \m{B}; {}^g N_H ^G (\m{N}\odot (\m{P} \odot \m{M}))) |. \ar[u]_{\theta}
}$$

}

As in the proofs of Proposition \ref{cor:twisted_morita} and Theorem \ref{cor-HH-Green-morita}, the cyclic isomorphism $\theta$ in the top row is given by rotating the smash factors in the cyclic bar construction, and acting by $g$ on all of the $N_H ^G \m{A}$ and $N_H ^G \m{P}$ smash factors. Similarly, the cyclic isomorphism $\theta$ in the bottom row is given by rotating the smash factors in the cyclic bar construction, and acting by $g$ on the $N_H ^G\m{A}$ smash factors and the $N_H ^G \Bar{}(\m{N};\m{C};\m{P}) = N_H ^G (\m{N} \odot \m{P})$ smash factor. Finally, the vertical $\theta$ is given by acting by $g$ on all the $N_H ^G \m{B}$ smash factors, and on the $N_H ^G \Bar{}(\m{P};\m{A};\m{M}) = N_H ^G (\m{P} \odot \m{M})$ smash factor. Recall that the action of $g$ on a $G$-Mackey functor is induced by the Weyl group action; thus, once we evaluate at $G/G$, the Weyl group action becomes trivial and so does the action of $g$. Therefore, after evaluating at $G/G$, the diagram commutes. 

\medskip 

Let $\m{M}$ be an $(\m{A},\m{A})$-bimodule. A similar analysis shows that upon evaluating at $G/G$, the diagram\\
\adjustbox{scale=0.93}{
$$
\xymatrix{|B^\text{cyc}_{\bullet}(N_H ^G \m{A}; {}^g N_H ^G (\m{M}\odot U_{\m{A}}))| \ar[r]^{\theta} \ar[dr]_-{\Sh{}{r}} & |B^\text{cyc}_{\bullet}(N_H ^G \m{A}; {}^g N_H ^G (U_{\m{A}} \odot \m{M}))| \ar[r]^{\theta} \ar[d]^{\Sh{}{l}} & |B^\text{cyc}_{\bullet}(N_H ^G \m{A}; {}^g N_H ^G (\m{M} \odot U_{\m{A}}))| \ar[dl]^{\Sh{}{r}} \\
& |B^\text{cyc}_{\bullet}(N_H ^G \m{A}; {}^g N_H ^G \m{M})| &
}$$
}\\
also commutes.
 
 \end{proof}

\subsection{Algebraic $tr$.} In the classical approach to trace methods in algebraic $K$-theory, one analyzes the fixed points of topological Hochschild homology, or TR-theory. For a ring $A$, TR$^n(A;p)$ is defined to be $\THH(A)^{C_{p^{n-1}}}.$ The spectrum $\TR(A;p)$ is then defined to be 
\[
\TR(A;p) := \holim_{\substack{\longleftarrow\\R}} \textup{TR}^n(A;p),
\]
where $R$ is the restriction map on $\THH(A)^{C_{p^{n-1}}}$, which can be defined using the cyclotomic structure on THH. In \cite{BlGeHiLa}, the authors introduce an algebraic analog of TR-theory, using twisted Hochschild homology for Green functors. For a ring $A$, they define this algebraic analogue as 
\[
tr_k(A;p) := \lim_{\longleftarrow} \m{\HH}_{e}^{C_{p^{n}}}(A)_k(C_{p^{n}}/C_{p^{n}}),
\]
where the maps in the limit are algebraic analogues of the restriction map, defined using a cyclotomic structure on twisted Hochschild homology for Green functors. 

In this paper, we extend the definition of algebraic $tr$ of rings, $tr(A;p)$, to $tr$ of (simplicial) rings with coefficients in a bimodule, $tr(A,M;p)$. We will show that this is a bicategorical shadow. We fix a prime $p$ throughout and omit the prime from the notation.

To define $tr(A,M)$, we follow the definition of $tr(A)$ in \cite{BlGeHiLa}. Let $A$ be a simplicial ring and $M$ an $A$-bimodule. As in Proposition 5.17 of \cite{BlGeHiLa}, there is a natural isomorphism
$$\Phi^{C_p}(\m{\HC}^{C_{p^n}} _e (A;M)_k) \cong \Phi^{C_p}((N^{C_{p^n}} _e M)\Box(N^{C_{p^n}} _e A)^{\Box k} ) \cong (N^{C_{p^{n-1}}} _e M) \Box(N^{C_{p^{n-1}}} _e A)^{\Box k}_.  $$
Here $\Phi^{C_p}$ denotes the geometric fixed points for Mackey functors, as defined in \cite{BlGeHiLa}. The geometric fixed points functor takes the twisting by a generator to the twisting by a generator. Thus $\Phi^{C_p}(\m{\HC}^{C_{p^n}} _e (A;M)_k)$ is isomorphic to $\m{\HC}^{C_{p^{n-1}}} _e (A;M)_k$. Note that this is an algebraic analogue of Proposition 7.5 of \cite{CLMPZ}.

Let $N \subset G$ be a normal subgroup, and let $\m{A}$ denote the Burnside Mackey functor for $G$. As in \cite{BlGeHiLa}, let $E\mathcal{F}_N(\m{A})$ denote the sub-Mackey functor of $\m{A}$ generated by $\m{A}(G/H)$ for all subgroups $H$ that do not contain $N$. Let $E\widetilde{\mathcal{F}}_N(\m{A})$ denote $\m{A}/E\mathcal{F}_N(\m{A})$. For a simplicial $G$-Mackey functor $\m{M}$, let
\[
E\widetilde{\mathcal{F}}_N(\m{M}):= \m{M} \Box E\widetilde{\mathcal{F}}_N(\m{A}). 
\]
Let $G$ be finite cyclic group that contains $C_p$, and let $\m{M}$ be a simplicial $G$-Mackey functor. As in the proof of Corollary 5.18 of \cite{BlGeHiLa}, there is a natural isomorphism of simplicial abelian groups 
$$\widetilde{E}\mathcal{F}_{C_p} \m{M} (G/G) \cong (\Phi^{C_p}\m{M})\big((G/C_p)/(G/C_p)\big).$$
The natural map of simplicial Mackey functors
$$ \m{\HC}^{C_{p^n}} _e (A;M) \to \widetilde{E}\mathcal{F}_{C_p} \m{\HC}^{C_{p^n}} _e (A;M)$$
therefore induces a natural map 
$$\m{\HC}^{C_{p^n}} _e (A;M)(C_{p^n}/C_{p^n}) \to (\Phi^{C_p}\m{\HC}^{C_{p^n}} _e (A;M))(C_{p^{n-1}}/C_{p^{n-1}}) \cong \m{\HC}^{C_{p^{n-1}}} _e (A;M)(C_{p^{n-1}}/C_{p^{n-1}}).$$
This is our algebraic restriction map, which induces in turn a map
$$r: \m{\HH}^{C_{p^n}} _e (A;M)_k(C_{p^n}/C_{p^n}) \to \m{\HH}^{C_{p^{n-1}}} _e (A;M)_k(C_{p^{n-1}}/C_{p^{n-1}})$$
for each $k$ after passing to homology.

\begin{defn}\label{defn-tr} Algebraic $tr$ with coefficients is obtained by taking the inverse limit along the restriction maps $r$: 
$$tr_k(A,M) := \lim_{\substack{\longleftarrow\\r}} \m{\HH}_{e}^{C_{p^{n}}}(A;M)_k(C_{p^{n}}/C_{p^{n}}).$$
\end{defn}

As noted in Remark \ref{rmk:notashadow}, although Hochschild homology for Green functors satisfies cyclic invariance, it is not a shadow as the coherence axioms for shadows do not hold. Interestingly, algebraic restriction homology, $tr$, \emph{is} a shadow functor.

\begin{prop}\label{prop-tr-shadow}
The functor $tr_k$ extends to a shadow on the bicategory of  simplicial rings and their bimodules, landing in the category of abelian groups.
\end{prop}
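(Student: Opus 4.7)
The strategy is to bootstrap from Proposition \ref{prop-HH-Green-fp-shadow}, viewing $tr_k$ as an inverse limit of shadows and showing that the algebraic restriction maps are compatible with the shadow structure at each finite level.

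First I would apply Proposition \ref{prop-HH-Green-fp-shadow} with $G = C_{p^n}$ and $H = e$ to each term in the limit defining $tr_k$ in Definition \ref{defn-tr}. A simplicial ring $A$ is the same datum as a simplicial $e$-Green functor, and applying the norm $N_e^{C_{p^n}}$ lifts rings and bimodules to $C_{p^n}$-Green functors and bimodules. Proposition \ref{prop-HH-Green-fp-shadow} then gives, for each $n$, a shadow
\[
\m{\HH}_{e}^{C_{p^n}}(-;-)_k(C_{p^n}/C_{p^n}) \colon \sR_{s\text{Ring}}(A,A) \to \cat{Ab},
\]
with cyclic invariance isomorphism $\theta_n$ satisfying the associator and unitor coherence diagrams of Definition \ref{defn:shadow}.

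Next I would show that the restriction maps $r$ assemble into a morphism of shadows from the level-$n$ shadow to the level-$(n-1)$ shadow: for every pair of bimodules $\mab M A B$ and $\mab N B A$ the square
\[
\xymatrix{
\m{\HH}_{e}^{C_{p^n}}(A; M\odot N)_k(C_{p^n}/C_{p^n}) \ar[r]^-{\theta_n}\ar[d]_r & \m{\HH}_{e}^{C_{p^n}}(B; N\odot M)_k(C_{p^n}/C_{p^n})\ar[d]^r\\
\m{\HH}_{e}^{C_{p^{n-1}}}(A; M\odot N)_k(C_{p^{n-1}}/C_{p^{n-1}}) \ar[r]^-{\theta_{n-1}} & \m{\HH}_{e}^{C_{p^{n-1}}}(B; N\odot M)_k(C_{p^{n-1}}/C_{p^{n-1}})
}
\]
commutes, and the coherence pentagon and triangle likewise commute after applying $r$. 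This follows because $r$ is defined by the natural projection $\m{A} \to \widetilde{E}\mathcal{F}_{C_p}\m{A}$ composed with the natural isomorphism $\widetilde{E}\mathcal{F}_{C_p}\m{M}(G/G) \cong (\Phi^{C_p}\m{M})\big((G/C_p)/(G/C_p)\big)$, and both the quotient and the geometric fixed points functor are natural and strong symmetric monoidal. Thus $r$ commutes with the cyclic rotation of smash factors that defines $\theta_n$ and with the associativity and unit structure maps of the bicategory.

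Finally, given this compatibility, the shadow structure on $tr_k$ is obtained by passing to the inverse limit: define $\theta \colon tr_k(A, M\odot N) \to tr_k(B, N\odot M)$ as the limit of the $\theta_n$. Each $\theta_n$ is an isomorphism and the connecting $r$'s commute with $\theta$, so $\theta$ is an isomorphism of abelian groups; the associator and unitor coherence diagrams pass to the limit since they hold at every finite level.

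The main obstacle is the second step, the compatibility of $r$ with $\theta_n$. The isomorphism $\theta_n$ from Theorem \ref{cor-HH-Green-morita} involves both a cyclic permutation of the bar factors and the $g$-twist on the $N_e^{C_{p^n}}\!A$ factors, and $r$ is built from the quotient out of the ``smaller-isotropy'' part of the Burnside Mackey functor. One must verify that the quotient commutes with the permutation-plus-twist that yields $\theta_n$ and that the identification $\widetilde{E}\mathcal{F}_{C_p}\m{M}(G/G) \cong (\Phi^{C_p}\m{M})\big((G/C_p)/(G/C_p)\big)$ intertwines the twist by a generator of $C_{p^n}$ with the twist by a generator of $C_{p^{n-1}}$; both facts reduce to naturality statements for $\Phi^{C_p}$ and for the Weyl-group action underlying Definition \ref{twistaction}.
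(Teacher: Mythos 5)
Your proposal is correct and follows essentially the same route as the paper: the paper's proof simply cites Proposition \ref{prop-HH-Green-fp-shadow} for each finite level and observes that the restriction maps $r$ are natural and respect the cyclic isomorphisms, so the inverse limit inherits the shadow structure. Your additional detail on why $r$ commutes with $\theta_n$ (naturality of the $\widetilde{E}\mathcal{F}_{C_p}$ quotient and of $\Phi^{C_p}$, and compatibility of the twists) is exactly the content the paper leaves implicit.
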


\begin{proof} 
Each $\m{\HH}_{e}^{C_{p^{n}}}(A;-)_k(C_{p^n}/C_{p^n})$ is a shadow (Proposition \ref{prop-HH-Green-fp-shadow}). The restriction maps $r$ are natural maps which respect the cyclic isomorphisms, and therefore the inverse limit is also a shadow.
\end{proof}

As shadows respect Morita equivalences, we can conclude that algebraic $tr$ is Morita invariant.

\begin{cor}\label{cor-tr-morita} Let $(\mab MAB, \mab NBA)$ be a Morita equivalence of simplicial rings $A$ and $B$.  For any $A$-bimodule $Q$, there is an isomorphism
\[
tr(A,Q) \xrightarrow{\cong} tr(B,N \otimes^\mathbb{L}_A Q\otimes^\mathbb{L}_A M).
\]
where $\otimes^\mathbb{L}$ denotes derived tensor product, computed using the bar construction. In particular, the Euler characteristic
\[
\chi(M)\colon tr(A) \to tr(B)
\]
is an isomorphism.
\end{cor}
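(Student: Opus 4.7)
The plan is to derive this corollary as a direct formal consequence of the general bicategorical result Proposition \ref{prop:morita} applied to the shadow $tr_k$ established in Proposition \ref{prop-tr-shadow}. Since $tr_k$ is an $\cat{Ab}$-shadow on the bicategory of simplicial rings and their bimodules, and the horizontal composition $\odot$ in this bicategory is by construction (Definition \ref{def:bicat-V}) the derived tensor product computed via the bar construction, the hypotheses of Proposition \ref{prop:morita} are met by the Morita equivalence $(\mab MAB, \mab NBA)$.

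First I would invoke Proposition \ref{prop:morita} directly, which yields the isomorphism
\[
tr_k(A;Q) \xrightarrow{\cong} tr_k(B; N \odot Q \odot M)
\]
with explicit inverse $\operatorname{tr}(N\odot Q\odot \ve)$, where $\ve$ is the evaluation of the dual pair. Next, I would translate the composite $N \odot Q \odot M$ into the notation of the statement: by unwinding Definition \ref{def:bicat-V} applied to $\cat V = s\cat{Ab}$ (cf.\ Remark \ref{rmk-HH-shadow}), this composite is precisely the iterated derived tensor product $N \otimes^{\mathbb L}_A Q \otimes^{\mathbb L}_A M$, where the two $\otimes^{\mathbb L}$ symbols are associated respectively with the right $A$-action on $N$ paired with the left $A$-action on $Q$, and the right $A$-action on $Q$ paired with the left $A$-action on $M$. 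This gives the first displayed isomorphism of the corollary.

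Finally, the Euler characteristic statement is the special case $Q = U_A = A$ with its canonical bimodule structure, for which $N\odot A \odot M \simeq N \odot M \simeq U_B = B$ by the Morita equivalence hypothesis; the last clause of Proposition \ref{prop:morita} then guarantees that $\chi(M)\colon tr_k(A) \to tr_k(B)$ is an isomorphism, with inverse $\chi(N)$.

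There is no substantive obstacle here: all of the real work has been done in establishing Proposition \ref{prop-tr-shadow} (checking that the algebraic restriction maps are compatible with the shadow structure on each $\m{\HH}^{C_{p^n}}_e(-)_k(C_{p^n}/C_{p^n})$ and that the inverse limit preserves the shadow axioms). Once that is in hand, the corollary is purely formal, and I would present it as a two- or three-line deduction citing Propositions \ref{prop-tr-shadow} and \ref{prop:morita}.
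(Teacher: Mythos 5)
Your proposal is correct and matches the paper's approach exactly: the paper deduces this corollary immediately from Proposition \ref{prop-tr-shadow} together with the fact that shadows respect Morita equivalences (Proposition \ref{prop:morita}), with $\odot$ unwound as the bar-construction derived tensor product and the Euler characteristic case being $Q = U_A$. No gaps; your write-up is, if anything, more explicit than the paper's one-line justification.
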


\section{Lax shadow functors.}\label{sec:laxshadow}

In this section, we show that the linearization maps from $\THH$ and $\TR$ to their algebraic counterparts are morphisms of shadows. That is, we show that these linearization maps arise from lax shadow functors in the sense of \cite{PS}.

\subsection{Linearization as a lax shadow functor}
Recall that classically, for $A$ a ring, there is a linearization map
$$\pi_k \THH(HA) \to \HH_k(A)$$
relating topological Hochschild homology and ordinary Hochschild homology. In this section we show that this linearization map arises from a morphisms of shadows between the topological and algebraic theories. That is, it arises as a lax shadow functor. We further construct a new linearization map, from topological restriction homology (TR) to algebraic restriction homology ($tr$), \[
\pi_k\TR(HA) \to tr_k(A),
\]
and prove that it also arises as a lax shadow functor.

We first recall the definitions of a lax functor and a lax shadow functor from \cite{PS}.

\begin{defn}
Let $\sB$ and $\sC$ be bicategories. A lax functor $F: \sB \to \sC$ consists of:
\begin{itemize}
    \item a function $F_0$ from the objects of $\sB$ to the objects of $\sC$
    \item functors $F_{R,S}: \sB(R,S) \to \sC\big(F_0(R), F_0(S)\big)$ for every pair of objects $R,S$ in $\sB$, and
    \item natural transformations
$$c: F_{R,S}(M) \odot F_{S,T}(N) \to F_{R,T}(M \odot N)$$
$$i: U_{F(R)} \to F(U_R)$$
\end{itemize}
satisfying appropriate coherence axioms. For details on the coherence axioms, see Definition 4.1 of \cite{Ben}. For illuminating pictures, see section 8 of \cite{PS}.
\end{defn}

\begin{defn}\label{defn:laxshadow} Let $\sB$ and $\sC$ be bicategories, and let
\[
\Sh{\sB}{-}\colon \coprod_{ A\in \sB_0} \sB(A,A) \to \cat T  
\]

\[
\Sh{\sC}{-}\colon \coprod_{D\in \sC_0} \sC(D,D) \to \cat Z  
\]
be shadows. A lax shadow functor consists of a lax functor $F: \sB \to \sC$ along with a functor $F_{tr}: \cat T \to \cat Z$, and a natural transformation
$$\phi: \Sh{\sC}{-} \circ F \to F_{tr} \circ \Sh{\sB}{-}$$
such that the following diagram commutes whenever it makes sense.

$$\xymatrix{
\Sh{}{F(M) \odot F(N)} \ar[r]^-\theta \ar[d] & \Sh{}{F(N) \odot F(M)} \ar[d] \\
\Sh{}{F(M \odot N)}  \ar[d]^\phi & \Sh{}{F(N \odot M)} \ar[d]^\phi \\
F_{tr} \Sh{}{M \odot N} \ar[r]^-{F_{tr}(\theta)} & F_{tr} \Sh{}{N \odot M}
}$$
\end{defn}

\begin{rem}\label{rem-lax-duals}
Note that a lax shadow functor is not guaranteed to preserve dual pairs. In Proposition 8.3 of \cite{PS}, it is shown that a lax shadow functor $F$ will preserve dual pairs if the map $F(M) \odot F(N) \to F( M \odot N)$ is an isomorphism. This is rarely true for the example we consider below, the Eilenberg--MacLane functor $H: \cat{Ab} \to \cat{Sp}$ (or $H: \cat{Mack}_{C_n} \to \cat{Sp}^{C_n}$).
\end{rem}

We first consider a generalization of the Eilenberg--MacLane spectrum functor to simplicial abelian groups. If $A_\bullet$ is a simplicial abelian group, define $H(A_\bullet) = | H A_\bullet|$, i.e., the geometric realization of the simplicial spectrum whose $i^{th}$ level is $HA_i$.

\begin{prop}\label{prop-EM-lax} The Eilenberg--MacLane spectrum functor $H: s\cat{Ab} \to \cat{Sp}$ defines a lax shadow functor between the shadows $\pi_k \THH$ and $\HH_k$. In particular, there is a natural transformation 
$$\phi_{A,M}: \pi_k \THH(HA; HM) \to \HH_k(A;M)$$
and a functor $F_{tr}= id: \cat{Ab} \to \cat{Ab}$, satisfying appropriate coherence conditions.
\end{prop}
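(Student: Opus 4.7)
\medskip

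The plan is to build the lax shadow functor in three stages: first extend $H$ to a lax functor between the relevant bicategories, then construct the natural transformation $\phi$ from the lax monoidal comparison maps, and finally verify the pentagon/hexagon coherence condition of Definition \ref{defn:laxshadow}.

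For the first stage, I would package the Eilenberg--MacLane functor as a lax functor $H\colon \sR_{s\cat{Ab}} \to \sR_{\cat{Sp}}$. On $0$-cells it sends a simplicial ring $A$ to the ring spectrum $HA$; on $1$-cells it sends an $(A,B)$-bimodule $M$ to the $(HA,HB)$-bimodule $HM$, and on $2$-cells it is functorial in the evident way. The lax structure cell $c\colon HM \wedge_{HB}^{\mathbb L} HN \to H(M\otimes_B^{\mathbb L}N)$ comes from the fact that the classical Eilenberg--MacLane construction is lax symmetric monoidal (via a Dold--Kan/Eilenberg--Zilber shuffle map), so the lax structure extends to derived tensor products of bimodules, realized via the bar construction by applying $H$ levelwise and using the lax comparison $HM \wedge HB^{\wedge n}\wedge HN \to H(M\otimes B^{\otimes n}\otimes N)$. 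The unit constraint $i\colon U_{HA} \to H(U_A)$ is simply the identity $HA = HA$.

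For the second stage, the natural transformation $\phi_{A,M}$ is constructed similarly. At each simplicial level of the cyclic bar construction, the lax monoidal comparison provides a map
\[
HM\wedge HA^{\wedge n} \longrightarrow H\bigl(M\otimes A^{\otimes n}\bigr),
\]
which is natural in all the factors and compatible with the face and degeneracy maps of $B^{\mathrm{cyc}}_\bullet$. Assembling these into a map of simplicial spectra and realizing yields
\[
\THH(HA;HM) = |B^{\mathrm{cyc}}_\bullet(HA;HM)| \longrightarrow |H B^{\mathrm{cyc}}_\bullet(A;M)| \simeq H\bigl(|B^{\mathrm{cyc}}_\bullet(A;M)|\bigr),
\]
and then taking $\pi_k$ together with the identification $\pi_k HX = \pi_k X$ for $X$ a simplicial abelian group produces $\phi_{A,M}\colon \pi_k\THH(HA;HM) \to \HH_k(A;M)$. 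With $F_{\mathrm{tr}}$ taken to be the identity on $\cat{Ab}$, this is manifestly natural in $(A,M)$.

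For the third stage, I would verify the hexagonal compatibility diagram. The cyclic isomorphisms $\theta$ for $\THH$ and $\HH$ are both induced by cyclic permutation of smash/tensor factors in the (bi)simplicial models for $\THH(HA;HM\odot HN)$ and $\HH(A;M\otimes_B^{\mathbb L}N)$. Since $H$ is lax \emph{symmetric} monoidal, the comparison maps $HM\wedge HA^{\wedge p}\wedge HN\wedge HB^{\wedge q} \to H(M\otimes A^{\otimes p}\otimes N\otimes B^{\otimes q})$ commute with cyclic permutation of the factors on the nose, so naturality of $c$ immediately gives commutativity of the hexagon. The main obstacle, and the step requiring the most care, is showing that the chain of bisimplicial identifications giving Dennis--Morita--Waldhausen cyclic invariance (as in the proof of Proposition \ref{prop:HH_is_shadow}) is compatible with the lax monoidal comparison at each bisimplicial level; once this is checked levelwise, the hexagon commutes after geometric realization and passage to homotopy groups, completing the proof.
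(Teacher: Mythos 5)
Your proposal is correct and follows essentially the same route as the paper's proof: extend $H$ to a lax functor using its lax symmetric monoidality, build $\phi_{A,M}$ from the levelwise comparison maps $HM\wedge HA^{\wedge n}\to H(M\otimes A^{\otimes n})$ on the cyclic bar construction, and deduce the coherence square from the fact that both cyclic isomorphisms $\theta$ are induced by permuting factors of the (bi)simplicial bar constructions, which the symmetric lax structure respects. The only cosmetic difference is that the paper identifies $\pi_k|H(M\otimes A^{\otimes\bullet})|$ with $\HH_k(A;M)$ by describing the spaces of the realized spectrum directly, whereas you invoke the (equivalent) identification $|HB^{\mathrm{cyc}}_\bullet(A;M)|\simeq H(|B^{\mathrm{cyc}}_\bullet(A;M)|)$.
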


\begin{proof}
Let $H$ be the Eilenberg--MacLane functor from the bicategory of simplicial rings, bimodules, and bimodule maps to the bicategory $\sR_{\cat Sp}$ of ring spectra,  bimodule spectra and their maps. (The latter is the bicategory $\sR_{\cat V}$ from Definition \ref{def:bicat-V}, for $\cat V$ the category of spectra. The former is $\sR_{s\cat{Ab}}$.) Since $H$ is a lax symmetric monoidal functor from (simplicial) abelian groups to spectra, it induces a lax functor on the aforementioned bicategories. That is:
\begin{itemize}
    \item if $A$ is a (simplicial) ring, then $HA$ is a ring spectrum;
    \item if $M$ is a $(B,A)$-bimodule, then $HM$ is an $(HB,HA)$-bimodule;
    \item for $(B,A)$-bimodules $M$ and $(A,B)$-bimodules $N$, there are natural transformations $c: |\Bar{(HM; HA; HN)}|  \to H|\Bar{(M;A;N)}|$ that satisfy appropriate coherence conditions, and
    \item the unit natural transformation $i: HA \to HA$ is the identity map.
\end{itemize}
We take $F_{tr}: \cat{Ab} \to \cat{Ab}$ to be the identity. Thus we need to define a natural transformation
$$\phi_{A,M}: \pi_k \THH(HA; HM) \to \HH_k(A;M)$$
that satisfies the appropriate compatibility condition with the shadow structure.

We define $\phi_{A,M}$ as follows. The $p^{th}$ level in the cyclic bar construction for $\THH(HA; HM)$ is given by $HM \sm (HA)^{ \sm p}$. Since $H$ is lax symmetric monoidal, there is a natural map
$$HM \sm (HA)^{\sm p}  \to H(M \otimes A^{\otimes p})$$
which respects the face and degeneracy maps. Thus we obtain a map on the geometric realizations
$$|HM \sm (HA)^{ \sm \bullet}| \to |H(M \otimes A^{\otimes \bullet})|.$$
The $k^{th}$ homotopy group of the left hand side is $\pi_k \THH(HA; HM)$. The $0^{th}$ space of the right hand side is the topological abelian group $B^\text{cyc}(A;M)$, and its $i^{th}$ space is $B^i B^\text{cyc}(A;M)$; here, $B^i$ denotes the $i$-fold bar construction. The $k^{th}$ homotopy group of the right hand side is $\HH_k(A;M)$. The map of geometric realizations thus induces a natural transformation $\phi_{A,M}: \pi_k \THH(HA; HM) \to \HH_k(A;M)$.

The Eilenberg--MacLane functor $H$ is lax symmetric monoidal, and the cyclic isomorphisms $\theta$ for $\THH$ and $\HH$ are both obtained on the cyclic bar construction. Therefore the diagram
$$\xymatrix{
\pi_k \THH(HA; |\Bar (HM ;HB ; HN)|) \ar[r]^-\theta \ar[d] & \pi_k \THH(HB; |\Bar (HN ; HA ; HM)|) \ar[d] \\
\pi_k \THH (HA; H |\Bar(M ; B; N)|) \ar[d]^\phi & \pi_k \THH (HB; H(|\Bar (N ; A; M)|) \ar[d]^\phi \\
\HH_k(A; |\Bar(M; B; N)|) \ar[r]^-{\theta} & \HH_k(B; |\Bar (N ;A ; M)|)
}$$
commutes for all $(A,B)$-bimodules $M$ and $(B,A)$-bimodules $N$.
\end{proof}

For any $(-1)$-connected ring spectrum $R$, there is a map of ring spectra $R \to H\pi_0(R)$ that is an isomorphism on $\pi_0$ and induces 0 on $\pi_i$ for $i > 0$. 
The linearization map
$$\pi_k \THH(R) \to \HH_k(\pi_0 R)$$
is simply the composite
$$\xymatrix{
\pi_k \THH(R) \ar[r] & \pi_k \THH(H\pi_0 R) \ar[r]^-\phi & \HH_k(\pi_0 R).
}$$
This construction generalizes to the $C_n$-equivariant case. We first generalize the equivariant Eilenberg--MacLane spectrum functor to simplicial Mackey functors. Let $H:\mathsf{Mack}_{C_n} \to \mathsf{Sp}^{C_n}$ denote the equivariant Eilenberg--MacLane spectrum functor; if $\m{M}_\bullet$ is a simplicial Mackey functor, define $H(\m{M}_\bullet) = |H\m{M}_\bullet|$, the geometric realization of the simplicial $C_n$-spectrum whose $i^{th}$ level is $H\m{M}_i$.

\begin{prop}\label{prop-EM-realiz}
If $\m{R}$ is a simplicial $C_n$-Green functor, and $\m{M}$ is an $\m{R}$-bimodule, then
$$\pi_k (|H({}^g \m{M}\Box\m{R}^{\Box \bullet} )|^{C_n}) \cong \m{\HH}^{C_n}_k (\m{R}; \m{M})(C_n/C_n).$$
\end{prop}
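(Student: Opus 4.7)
The plan is to identify both sides with the $k$-th homotopy group of the simplicial abelian group obtained by evaluating the twisted cyclic nerve at the terminal orbit $C_n/C_n$.

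Write $\m{N}_\bullet := {}^g \m{M}\Box\m{R}^{\Box \bullet}$, viewed as the underlying simplicial Mackey functor of $\m{\HC}^{C_n}_\bullet(\m{R};{}^g \m{M})$. By the definition of $\m{\HH}^{C_n}_k$ as the homology of the normalized chain complex, and because normalization together with homology are computed levelwise in the orbit variable, the right-hand side of the proposed isomorphism equals
\[
H_k\bigl(\m{N}_\bullet(C_n/C_n)\bigr) \cong \pi_k\bigl(\m{N}_\bullet(C_n/C_n)\bigr),
\]
where the last identification is Dold--Kan for simplicial abelian groups.

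For the left-hand side, I would invoke the skeletal spectral sequence associated to the simplicial $C_n$-spectrum $H\m{N}_\bullet$, which takes the form
\[
E^2_{p,q} = \pi_p\bigl(\pi_q^{C_n}(H\m{N}_\bullet)\bigr) \Rightarrow \pi_{p+q}\bigl(|H\m{N}_\bullet|^{C_n}\bigr),
\]
where $\pi_q^{C_n}$ is applied levelwise to produce a simplicial abelian group. Since each $H\m{N}_i$ is an equivariant Eilenberg--MacLane spectrum, we have $\pi_q^{C_n}(H\m{N}_i) = \m{N}_i(C_n/C_n)$ for $q=0$ and $0$ otherwise. The spectral sequence therefore collapses onto the $q=0$ row, producing
\[
\pi_k\bigl(|H\m{N}_\bullet|^{C_n}\bigr) \cong \pi_k\bigl(\m{N}_\bullet(C_n/C_n)\bigr),
\]
which agrees with the right-hand side and thus gives the desired natural isomorphism.

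The main obstacle will be the careful handling of equivariant homotopical technology: ensuring that $H\m{N}_\bullet$ is suitably (Reedy) cofibrant so that $|H\m{N}_\bullet|^{C_n}$ agrees with its derived counterpart and the skeletal spectral sequence is valid, together with the fundamental computation of $\pi_q^{C_n}$ of an equivariant Eilenberg--MacLane spectrum of a Mackey functor. Both facts are standard in equivariant stable homotopy theory (e.g., via Greenlees--May or Hill--Hopkins--Ravenel), so once they are in place the argument reduces to a straightforward collapse of the skeletal filtration. An equivalent route that avoids spectral sequences would be to invoke directly the equivalence $(H\m{A})^{C_n}\simeq H\bigl(\m{A}(C_n/C_n)\bigr)$ for any Mackey functor $\m{A}$, together with the fact that categorical fixed points commute with geometric realization for levelwise Eilenberg--MacLane diagrams after cofibrant replacement.
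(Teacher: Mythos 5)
Your argument is correct, but it follows a genuinely different route from the paper. The paper works at the space level: it invokes the dos Santos--Nie variant $\Psi$ of the Elmendorf coalescence functor, identifies the $0$-space of each equivariant Eilenberg--MacLane spectrum $H({}^g\m{M}\Box\m{R}^{\Box p})$ with $\Psi$ of the corresponding coefficient system, and then uses that space-level geometric realization commutes with finite limits (hence with $K$-fixed points for $K\subseteq C_n$) to produce an explicit natural equivalence between $K$-fixed points of the $0$-space of the realization and $|\m{\HC}^{C_n}_\bullet(\m{R};\m{M})(G/K)|$; this yields the statement for every orbit $G/K$ at once and gives the concrete point-set zig-zag later used for the lax shadow functor. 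You instead stay entirely in genuine equivariant stable homotopy theory: the skeletal spectral sequence of the realization, $E^2_{p,q}=H_p\bigl(\pi^{C_n}_q(H\m{N}_\bullet)\bigr)\Rightarrow \pi^{C_n}_{p+q}(|H\m{N}_\bullet|)$, collapses to the $q=0$ row because $\underline{\pi}_*$ of an Eilenberg--MacLane spectrum of a Mackey functor is concentrated in degree $0$, and Dold--Kan identifies the surviving row with $\m{\HH}^{C_n}_k(\m{R};\m{M})(C_n/C_n)$. Your route avoids the coalescence technology and any point-set commutation of fixed points with realization (the spectral sequence computes $\pi^{C_n}_*$ of the realization directly), at the cost of the Reedy-cofibrancy/convergence bookkeeping you mention; it also yields the result for all orbits by replacing $\pi^{C_n}_*$ with $\pi^K_*$. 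One point to make explicit: since $\m{R}$ and $\m{M}$ are simplicial, each ${}^g\m{M}\Box\m{R}^{\Box p}$ is itself a simplicial Mackey functor, so "$H\m{N}_i$ is an equivariant Eilenberg--MacLane spectrum" is literally true only after you pass to the diagonal of the resulting bisimplicial Mackey functor (or restrict to constant simplicial input); with that identification in place, which is also implicit in the paper's treatment, your collapse argument and the comparison with the homology of the twisted cyclic nerve go through, and naturality holds because the spectral sequence is natural in $(\m{R},\m{M})$.
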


\begin{proof}
Denote by $\mathcal{G}$ the orbit category of the group $G=C_n$, with objects $G/K$ and morphisms $G$-maps. A $\mathcal{G}$-space is a functor from $\mathcal{G}^{op}$ to topological spaces. For example, a Mackey functor $\m{M}$ defines a $\mathcal{G}$-space whose value on $G/K$ is the discrete abelian group $\m{M}(G/K)$. A simplicial Mackey functor $\m{M}_\bullet$ defines a $\mathcal{G}$-space whose value on $G/K$ is the geometric realization $|\m{M}_\bullet (G/K)|$.

In \cite{dSN}, the authors study a functor $\Psi$ from $\mathcal{G}$-spaces to $G$-spaces that is a variant of the Elmendorf coalescence functor. In Proposition 4.2 of \cite{dSN}, they show that $\Psi$ is an inverse up to weak equivalence of the functor from $G$-spaces to $\mathcal{G}$-spaces that takes a $G$-space $X$ to the $\mathcal{G}$-space $G/K \mapsto X^K$. It follows that for every $\mathcal{G}$-space $\mathcal{X}$, there is a natural weak equivalence $\Psi(\mathcal{X})^K \to \mathcal{X}(G/K)$. In particular, when $\mathcal{X} = {}^g \m{M} \Box\m{R}^{\Box p}$,
there is a natural weak equivalence
$$\xymatrix{
\Psi({}^g \m{M} \Box\m{R}^{\Box p} )^K \ar[r]^-{\sim} & ({}^g \m{M} \Box\m{R}^{\Box p}  )(G/K).
}$$

By Theorem 4.22 of \cite{dSN}, the 0-space of the spectrum $H({}^g \m{M} \Box\m{R}^{\Box p})$ is naturally equivalent to $\Psi({}^g \m{M} \Box\m{R}^{\Box p} )$. Let $H(0)$ denote the 0-space of the spectrum $|H({}^g \m{M} \Box\m{R}^{\Box \bullet})|$. We apply geometric realization to the equivalence above, using the fact that our groups are all finite, and geometric realization commutes with finite limits.  We obtain
$$\xymatrix{
H(0)^K  \ar[r]^-{\sim} & |\m{HC}^{C_n} _\bullet(\m{R}; \m{M})(G/K)|
}$$
and $\pi_k (|\m{HC}^{C_n} _\bullet(\m{R}; \m{M})(G/K)|) \cong \underline{H}_k(\m{HC}^{C_n} (\m{R}; \m{M}))(G/K)  = \m{\HH}^{C_n}_k (\m{R}; \m{M})(G/K)$, as required.
\end{proof}

\begin{prop}\label{prop-equiv-EM-lax} The Eilenberg--MacLane spectrum functor $H: s\cat{Mack}_{C_n} \to \cat{Sp}^{C_n}$ induces a lax shadow functor between the shadows $\pi_k \THH_{C_n}(-;-)^{C_n}$ and $\m{\HH}^{C_n}(-;-)_k(C_n/C_n)$. In particular, there is a natural transformation
\[\phi_{\m{R}, \m{M}} : \pi_k \THH_{C_n} (H\m{R}; H\m{M})^{C_n} \to \m{\HH}^{C_n}_k (\m{R}; \m{M})(C_n/C_n)\]
and a functor $F_{tr} = id: \cat {Ab} \to \cat {Ab}$, satisfying appropriate coherence conditions.
\end{prop}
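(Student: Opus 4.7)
The strategy is to mirror the proof of Proposition \ref{prop-EM-lax}, replacing the ordinary Eilenberg--MacLane functor with the equivariant one and using Proposition \ref{prop-EM-realiz} to identify the target. First I would verify that $H: s\cat{Mack}_{C_n} \to \cat{Sp}^{C_n}$, being lax symmetric monoidal, induces a lax functor of bicategories from $\sR_{s\cat{Mack}_{C_n}}$ to $\sR_{\cat{Sp}^{C_n}}$: it sends simplicial $C_n$-Green functors to $C_n$-ring spectra and $\m{R}$-bimodules to $H\m{R}$-bimodules, while the comparison maps $c: |\operatorname{Bar}(H\m{M}; H\m{A}; H\m{N})| \to H|\operatorname{Bar}(\m{M}; \m{A}; \m{N})|$ (obtained levelwise from the lax monoidal structure) together with the identity unit $i$ furnish the required coherence data. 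I would take $F_{tr}$ to be the identity functor on $\cat{Ab}$.

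To construct $\phi_{\m{R}, \m{M}}$, I would work levelwise in the twisted cyclic bar construction. The $p^{\text{th}}$ level of $B^{\text{cyc}, C_n}_\bullet(H\m{R}; H\m{M})$ is $H\m{M} \wedge (H\m{R})^{\wedge p}$, while the $p^{\text{th}}$ level of $\m{\HC}^{C_n}_\bullet(\m{R}; \m{M})$ is ${}^g\m{M} \Box \m{R}^{\Box p}$, whose underlying Mackey functor agrees with $\m{M} \Box \m{R}^{\Box p}$. The lax monoidal structure on $H$ provides natural maps $H\m{M} \wedge (H\m{R})^{\wedge p} \to H(\m{M} \Box \m{R}^{\Box p})$, and I would check that these commute with the face maps, the degeneracy maps, and the twisted boundary $d_p$. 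Taking geometric realizations then yields a natural map $|B^{\text{cyc}, C_n}_\bullet(H\m{R}; H\m{M})| \to |H\, \m{\HC}^{C_n}_\bullet(\m{R}; \m{M})|$ of $C_n$-spectra. Taking $C_n$-fixed points, applying $\pi_k$, and invoking Proposition \ref{prop-EM-realiz} to identify $\pi_k(|H\, \m{\HC}^{C_n}_\bullet(\m{R}; \m{M})|^{C_n})$ with $\m{\HH}^{C_n}_k(\m{R}; \m{M})(C_n/C_n)$ produces the desired $\phi_{\m{R}, \m{M}}$.

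Finally, for the coherence square of Definition \ref{defn:laxshadow}, I would observe that the cyclic isomorphisms $\theta$ for both shadows $\pi_k\THH_{C_n}(-;-)^{C_n}$ (see Proposition \ref{prop:fixedpointsshadow}) and $\m{\HH}^{C_n}(-;-)_k(C_n/C_n)$ (see Proposition \ref{prop-HH-Green-fp-shadow}) are induced by the same cyclic rotation of factors on their respective cyclic bar constructions, and that $\phi$ is itself built from a levelwise lax monoidal comparison. Consequently, the required square commutes already at the level of simplicial objects and remains commutative after realization, fixed points, and passage to homotopy groups. The main obstacle I anticipate is verifying compatibility of the levelwise comparison map with the $g$-twisted face $d_p$ on either side: this reduces to the facts that the $g$-action on a $C_n$-Mackey functor is the Weyl group action, the $g$-action on $H\m{R}$ is induced by the ambient $C_n$-action on the $C_n$-spectrum, and $H$, being a functor of $C_n$-equivariant categories, intertwines these two actions.
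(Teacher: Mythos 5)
Your proposal is correct and follows essentially the same route as the paper's proof: construct the levelwise comparison map from the lax symmetric monoidal structure of $H$ on the twisted cyclic bar construction, check compatibility with the twisted face via the identity $H({}^g\m{M}) = {}^g H\m{M}$, realize, take $C_n$-fixed points and $\pi_k$, identify the target using Proposition \ref{prop-EM-realiz}, and verify the coherence square exactly as in Proposition \ref{prop-EM-lax}. The obstacle you flag (compatibility with the $g$-twisted face) is precisely the point the paper addresses with that same identity.
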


\begin{proof}
In the notation of Definition \ref{defn:laxshadow}, let $F = H$ and $F_{tr} = \Id: \cat {Ab} \to \cat {Ab}$. Let $g$ be the generator $e^{2\pi i/n}$ of $C_n$. The functor $H$ is lax symmetric monoidal, so it induces a lax functor between the bicategory of simplicial $C_n$-Green functors and the bicategory of $C_n$-ring spectra. That is:
\begin{itemize}
   \item if $\m R$ is a (simplicial) $C_n$-Green functor, then $H \m R$ is a $C_n$-ring spectrum;
    \item if $\m M$ is an $(\m S,\m R)$-bimodule, then $H \m M$ is an $(H \m S,H \m R)$- bimodule;
   \item for $(\m S,\m R)$-bimodules $\m M$ and $(\m R,\m S)$-bimodules $\m N$, there are natural transformations $c: |\Bar (H \m M ;H \m R ; H \m N)| \to H | \Bar (\m M ; \m R;  \m N)|$ that satisfy appropriate coherence conditions, and
    \item the unit natural transformation $i: H\m R \to H \m R$ is the identity map. 
   \end{itemize}
For any $\m{R}$-bimodule $\m{M}$, $H({}^g \m{M}) = {}^g H\m{M}$. On each level of the cyclic bar construction, there is therefore a natural map
$${}^g H\m{M}\sm(H\m{R})^{\sm p}  \to  H({}^g \m{M} \Box \m{R}^{\Box p}  )$$
that commutes with the face and degeneracy maps and thus induces a map of $C_n$-spectra $$\THH_{C_n} (H\m{R}; H\m{M}) = |{}^g H\m{M} \sm(H\m{R})^{\sm \bullet}  | \to  |H({}^g \m{M}\Box\m{R}^{\Box \bullet}  )|.$$
By Proposition \ref{prop-EM-realiz}, applying $C_n$-fixed points and $\pi_k$ results in the natural transformation \[\phi_{\m{R}, \m{M}} : \pi_k \THH_{C_n} (H\m{R}; H\m{M})^{C_n} \to \m{\HH}^{C_n}_k (\m{R}; \m{M})(C_n/C_n)\]
As in the proof of Proposition \ref{prop-EM-lax}, the relevant diagram commutes, whence this a lax shadow functor, as desired.
\end{proof}

\subsection{\texorpdfstring{$\TR$}{TR} and algebraic \texorpdfstring{$tr$}{tr}.}
Recall from Proposition \ref{prop-tr-shadow} that algebraic $tr$ is a shadow on the bicategory of simplicial rings and their bimodules. We will show that the Eilenberg--MacLane functor $H: \cat{Ab} \to \cat{Sp}$ defines a lax shadow functor between $\TR$ and algebraic $tr$. For this, we will need to discuss $\TR$ with coefficients, first defined by Lindenstrauss--McCarthy \cite{LM}. We recall the definition of $\TR$ with coefficients from \cite{CLMPZ}. For a ring spectrum $R$ and an $R$-bimodule $M$, $p^n$-fold $\THH$ is defined as a $C_{p^n}$-spectrum:
$$\THH^{(p^n)}(R; M) : = \THH_{C_{p^n}} (N^{C_{p^n}} _e R; N^{C_{p^n}} _e M).$$

By Proposition 7.5 of \cite{CLMPZ}, when $R$ and $M$ are cofibrant,
$$\Phi^{C_p} \THH^{(p^n)}(R; M) \cong \THH^{(p^{n-1})}(R; M).$$
This is then used to define restriction maps
$$R: \THH^{(p^n)}(R; M)^{C_{p^n}} \to \THH^{(p^{n-1})}(R; M)^{C_{p^{n-1}}}.$$

\begin{defn}
The spectrum $\TR(R;M)$ is defined to be the homotopy inverse limit
\[
\TR(R;M) := \holim_{\substack{\longleftarrow\\R}} \THH^{(p^n)}(R; M)^{C_{p^n}}
\]
\end{defn}

The proof of the following proposition was adapted from unpublished work of Campbell, Lind, Malkiewich, Ponto, and Zakharevich:

\begin{prop}\label{prop-TR-shadow}
$\TR$ with coefficients, $\TR(R;-)$, extends to a shadow on the bicategory $\sR_{\cat Sp}$ of ring spectra and their bimodules. 
\end{prop}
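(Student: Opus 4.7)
The plan is to mirror the proof of Proposition \ref{prop-tr-shadow} in the topological setting, proceeding in three steps: (i) show that for each $n$ the functor $\THH^{(p^n)}(-;-)^{C_{p^n}}$ defines a shadow on $\sR_{\cat{Sp}}$, (ii) verify that the restriction maps in the tower defining $\TR$ are natural transformations compatible with the cyclic invariance isomorphisms $\theta$, and (iii) conclude that the homotopy inverse limit inherits a shadow structure.

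For step (i), I would exploit the factorization $\THH^{(p^n)}(R;M) = \THH_{C_{p^n}}(N_e^{C_{p^n}}R;\, N_e^{C_{p^n}}M)$. The norm functor $N_e^{C_{p^n}}\colon \cat{Sp} \to \cat{Sp}^{C_{p^n}}$ is strong symmetric monoidal and commutes with the sifted colimits appearing in the bar construction, so, exactly as in the proof of Theorem \ref{cor-HH-Green-morita}, it induces a bifunctor of bicategories $\sR_{\cat{Sp}} \to \sR_{\cat{Sp}^{C_{p^n}}}$ under which $N_e^{C_{p^n}}(M \odot_R N) \simeq N_e^{C_{p^n}}M \odot_{N_e^{C_{p^n}}R} N_e^{C_{p^n}}N$. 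Composing this with the shadow $\THH_{C_{p^n}}(-;-)^{C_{p^n}}$ provided by Proposition \ref{prop:fixedpointsshadow} transports the shadow structure (the coherence diagrams for cyclic invariance and unitality pass through a strong monoidal functor that preserves geometric realization), yielding a shadow $\THH^{(p^n)}(-;-)^{C_{p^n}}$ on $\sR_{\cat{Sp}}$.

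For step (ii), I would analyze the restriction map $R$, which arises from the natural equivalence $\Phi^{C_p}\THH^{(p^n)}(R;M) \simeq \THH^{(p^{n-1})}(R;M)$ of Proposition 7.5 of \cite{CLMPZ} composed with the canonical map from genuine to geometric fixed points. Since $\Phi^{C_p}$ is strong symmetric monoidal and sends the generator-twist appearing in the twisted cyclic bar construction at level $p^n$ to the generator-twist at level $p^{n-1}$, the cyclic permutation of smash factors defining $\theta$ at level $n$ is carried to the corresponding permutation at level $n-1$. Hence the restriction maps assemble into a morphism of shadows between consecutive stages of the tower.

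For step (iii), because the stages of the tower are shadows landing in $\cat{SH}$ and the restriction maps commute (up to coherent natural isomorphism) with $\theta$ and with the unitors, the homotopy inverse limit is again a functor on $\sR_{\cat{Sp}}(R,R)$ equipped with a cyclic invariance isomorphism satisfying the required coherence. This gives the shadow structure on $\TR(-;-)$. The main obstacle is step (ii): showing that the cyclotomic restriction maps respect $\theta$ coherently requires a careful comparison of how $\Phi^{C_p}$ and the fixed-point spectrum interact with the twisted cyclic bar construction of a bimodule, which is precisely the technical input drawn from the unpublished work of Campbell, Lind, Malkiewich, Ponto, and Zakharevich.
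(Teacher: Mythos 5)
Your proposal is correct and follows essentially the same route as the paper: factor through the norm (strong symmetric monoidal, commuting with the bar construction), obtain levelwise cyclic isomorphisms whose coherence is supplied by Proposition \ref{prop:fixedpointsshadow} after taking $C_{p^n}$-fixed points, check compatibility with the restriction maps via the interaction of $\Phi^{C_p}$ with the norm, and pass to the homotopy inverse limit. The only cosmetic difference is that you package the levelwise statement as transporting a shadow along a norm-induced bifunctor, whereas the paper constructs the equivariant isomorphisms $\sigma^n_{A,B}$ directly from Theorem \ref{cor-twisted-thh-morita} before taking fixed points.
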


\begin{proof}
Let $A$ and $B$ be ring spectra, $M$ an $(A,B)$-bimodule and $L$ a $(B,A)$-bimodule. We will construct cyclic isomorphisms
$$\tau^n _{A,B}: \THH^{(p^n)}(A; |\Bar(M;B;L)|)^{C_{p^n}} \cong \THH^{(p^n)}(B; |\Bar(L;A;M)|)^{C_{p^n}}$$
that are compatible with the restriction maps. Therefore we will obtain the desired equivalences
$$\theta_{A,B}: \TR(A; |\Bar(M;B;L)|) \to \TR(B; |\Bar(L;A;M)|).$$
First note that
 $$\THH^{(p^n)}(A; |\Bar(M;B;L)|) \cong  \THH_{C_{p^n}} (N_e ^{C_{p^n}} A; |\Bar(N_e ^{C_{p^n}} M; N_e ^{C_{p^n}} B; N_e ^{C_{p^n}} L)|). $$
 By Theorem \ref{cor-twisted-thh-morita}, we have $C_{p^n}$-equivariant isomorphisms
 $$\sigma^n _{A,B}: \THH^{(p^n)}(A; |\Bar(M;B;L)|) \cong \THH^{(p^n)}(B; |\Bar(L;A;M)|).$$
 Taking $C_{p^n}$-fixed points, we obtain the isomorphisms
 $$\tau^n _{A,B}: \THH^{(p^n)}(A; |\Bar(M;B;L)|)^{C_{p^n}} \cong \THH^{(p^n)}(B; |\Bar(L;A;M)|)^{C_{p^n}}.$$
 As in Proposition \ref{prop:fixedpointsshadow}, these isomorphisms satisfy the shadow coherence conditions.
 By observing the interaction of the norm functor with the geometric fixed points functor, we see that the following diagram commutes.
 $$\xymatrix{
\Phi^{C_p} \THH^{(p^n)}(A; |\Bar(M;B;L)|) \ar[d]^-\cong \ar[r]^-{\Phi^{C_p} \sigma^n _{A,B}} & \Phi^{C_p} \THH^{(p^n)}(B; |\Bar(L;A;M)|) \ar[d]^-\cong \\
 \THH^{(p^{n-1})}(A; |\Bar(M;B;L)|) \ar[r]^-{\sigma^{n-1} _{A,B}} & \THH^{(p^{n-1})}(B; |\Bar(L;A;M)|)
 }$$
 It follows that the isomorphisms $\tau^n _{A,B}$ respect the restriction maps, and therefore induce equivalences 
 $$\theta_{A,B}: \TR(A; |\Bar(M;B;L)|) \to \TR(B; |\Bar(L;A;M)|)$$
 as required. 
\end{proof}

In particular, for every $k$, $\pi_k \TR(R; -)$ extends to a shadow on the bicategory $\sR_{\cat Sp}$, taking values in $\cat{Ab}$.

\begin{prop}\label{prop-tr-lax} The Eilenberg--MacLane spectrum functor $H: s\cat{Ab} \to \cat{Sp}$ defines a lax shadow functor between the shadows $\pi_k \TR$ and $tr_k$. In particular, for any ring $A$ there is a natural linearization map 
$$\pi_k \TR(HA) \to tr_k(A).$$
\end{prop}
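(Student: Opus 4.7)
The plan is to construct the linearization map level by level using the equivariant lax shadow functor from Proposition~\ref{prop-equiv-EM-lax}, then assemble the levels into an inverse limit compatible with the shadow structures of Propositions~\ref{prop-TR-shadow} and~\ref{prop-tr-shadow}. First, for every $n \geq 0$, I would apply Proposition~\ref{prop-equiv-EM-lax} to the simplicial $C_{p^n}$-Green functor $N^{C_{p^n}}_e A$ and the bimodule $N^{C_{p^n}}_e M$. Since $H$ is lax symmetric monoidal and compatible with the norm in the sense that $H N^{C_{p^n}}_e(-) \simeq N^{C_{p^n}}_e H(-)$ on cofibrant inputs, this identifies $\THH^{(p^n)}(HA;HM)$ with $\THH_{C_{p^n}}(H N^{C_{p^n}}_e A; H N^{C_{p^n}}_e M)$ up to a natural weak equivalence of $C_{p^n}$-spectra, and hence yields a natural transformation
\[
\phi_n\colon \pi_k \THH^{(p^n)}(HA;HM)^{C_{p^n}} \to \m{\HH}^{C_{p^n}}_e(A;M)_k(C_{p^n}/C_{p^n}).
\]

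The crucial step is to verify that the family $\{\phi_n\}$ intertwines the topological and algebraic restriction maps. The topological map $R$ is derived from the cyclotomic equivalence $\Phi^{C_p}\THH^{(p^n)} \simeq \THH^{(p^{n-1})}$, while $r$ arises from the analogous identification $\Phi^{C_p} \m{\HC}^{C_{p^n}}_e \cong \m{\HC}^{C_{p^{n-1}}}_e$ used to define algebraic $tr$ before Definition~\ref{defn-tr}. Since $H$ commutes (up to natural equivalence) with both flavors of geometric fixed points---this compatibility is already implicit in Proposition~\ref{prop-EM-realiz} relating $H$ to the Elmendorf functor $\Psi$---tracing through the definitions should yield commutative squares
\[
\xymatrix{
\pi_k \THH^{(p^n)}(HA;HM)^{C_{p^n}} \ar[r]^-{\phi_n}\ar[d]_{R} & \m{\HH}^{C_{p^n}}_e(A;M)_k(C_{p^n}/C_{p^n}) \ar[d]^{r} \\
\pi_k \THH^{(p^{n-1})}(HA;HM)^{C_{p^{n-1}}} \ar[r]^-{\phi_{n-1}} & \m{\HH}^{C_{p^{n-1}}}_e(A;M)_k(C_{p^{n-1}}/C_{p^{n-1}}).
}
\]

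Assembling these squares and composing with the natural map $\pi_k \holim_n X_n \to \lim_n \pi_k X_n$ then produces the desired linearization map
\[
\pi_k \TR(HA;HM) \to tr_k(A;M),
\]
which in the case $M = A$ specializes to $\pi_k \TR(HA) \to tr_k(A)$. To promote this natural transformation to a lax shadow functor, I would verify the compatibility square of Definition~\ref{defn:laxshadow}: both the cyclic isomorphism $\theta$ on $\pi_k \TR$ (from the proof of Proposition~\ref{prop-TR-shadow}) and the $\theta$ on $tr_k$ (from Proposition~\ref{prop-tr-shadow}) are assembled from the $C_{p^n}$-level cyclic isomorphisms, which in turn are intertwined with $\phi_n$ by the lax shadow functor property at each level (Proposition~\ref{prop-equiv-EM-lax}); compatibility therefore persists in the inverse limit.

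The main obstacle is verifying that the $\phi_n$ form a map of towers, i.e.\ the commutativity of the restriction square above. This requires tracking the Eilenberg--MacLane functor through the two distinct constructions of the restriction maps: the cyclotomic structure on equivariant $\THH$ (defined via a change-of-universe composed with a geometric-fixed-points comparison) and the $\widetilde{E}\mathcal{F}_{C_p}$-localization of simplicial Mackey functors used in the algebraic setting. A subsidiary technicality is the $\lim^1$ discrepancy between $\pi_k$ of a homotopy inverse limit and the inverse limit of homotopy groups, which does not obstruct the existence of the linearization map but would need to be addressed for any refinement to an isomorphism or surjectivity statement.
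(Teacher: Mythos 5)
Your overall strategy is the same as the paper's: apply the equivariant lax shadow functor of Proposition~\ref{prop-equiv-EM-lax} at each level $n$ after precomposing with the norm $N^{C_{p^n}}_e$, check compatibility with the restriction maps, and pass to the (inverse) limit, inserting $\pi_k\holim \to \lim\pi_k$. However, there is a genuine error in your first step: the claimed natural weak equivalence $H N^{C_{p^n}}_e(-) \simeq N^{C_{p^n}}_e H(-)$ is false. The norm of an Eilenberg--MacLane spectrum is in general \emph{not} an Eilenberg--MacLane spectrum (it has nontrivial higher homotopy Mackey functors), so $\THH^{(p^n)}(HA;HM) = \THH_{C_{p^n}}(N^{C_{p^n}}_e HA; N^{C_{p^n}}_e HM)$ cannot be identified up to equivalence with $\THH_{C_{p^n}}(H N^{C_{p^n}}_e A; H N^{C_{p^n}}_e M)$. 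What is true, and what the paper uses, is only a natural transformation $N^{C_{p^n}}_e H \to H N^{C_{p^n}}_e$: since $N^{C_{p^n}}_e \m{M} = \m{\pi}_0^{C_{p^n}}(N^{C_{p^n}}_e H\m{M})$, the linearization map $R \to H\m{\pi}_0^{C_{p^n}}(R)$ applied to $R = N^{C_{p^n}}_e H\m{M}$ gives the comparison map. Because a lax shadow functor only requires a map (not an isomorphism) and this natural transformation points in the direction you need, your argument can be repaired by replacing the equivalence with this map --- which is exactly the paper's construction of $\phi_n$.

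A related caution for your restriction-square argument: you justify it by saying $H$ ``commutes (up to natural equivalence) with both flavors of geometric fixed points,'' but this is false for the same reason ($\Phi^{C_p} H\m{M}$ is generally not $H$ of a Mackey-functor-level geometric fixed point object; e.g.\ $\Phi^{C_2}H\underline{\mathbb{F}}_2$ has polynomial homotopy). The compatibility of the $\phi_n$ with the topological restriction map $R$ and the algebraic map $r$ has to be checked directly from the definitions of the two restriction maps (the cyclotomic structure on $\THH^{(p^n)}$ and the $\widetilde{E}\mathcal{F}_{C_p}$-construction on the twisted cyclic nerve); the paper asserts this compatibility without invoking any such commutation of $H$ with geometric fixed points. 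With these two corrections, your level-by-level-then-limit argument matches the published proof.
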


\begin{proof} 
By Proposition \ref{prop-equiv-EM-lax}, the equivariant Eilenberg--MacLane spectrum functor defines a lax shadow functor $\pi_k \THH_{C_{p^n}}(-;-)^{C_{p^n}} \to \m{\HH}^{C_{p^n}}(-;-)_k(C_{p^n}/C_{p^n})$. Precomposing with the norm $N^{C_{p^n}} _e: s\cat{Ab} \to s\cat{Mack}_{C_{p^n}}$, we obtain a lax shadow functor
$$\pi_k \THH_{ C_{p^n}}(H N^{C_{p^n}} _e A; H N^{C_{p^n}} _e M)^{C_{p^n}} \to \m{\HH}_k^{C_{p^n}}(N^{C_{p^n}} _e A; N^{C_{p^n}} _e M)(C_{p^n}/C_{p^n})$$
for each simplicial ring $A$ and bimodule $M$. For a Mackey functor $\m{M}$,  $N_e ^{C_{p^n}} \m{M} = \m{\pi}_0 ^{C_{p^n}} (N_e ^{C_{p^n}} H \m{M})$. If we denote $N_e ^{C_{p^n}} H \m{M}$ by $R$, the linearization map $R \to H\m{\pi}_0^{C_{p^n}}(R)$ therefore results in a natural transformation $ N^{C_{p^n}} _e H \to  H N^{C_{p^n}} _e $. 
Thus we obtain a lax shadow functor
$$\pi_k \THH_{ C_{p^n}}(N^{C_{p^n}} _e HA; N^{C_{p^n}} _e HM)^{C_{p^n}} \to \m{\HH}_k^{C_{p^n}}(N^{C_{p^n}} _e A; N^{C_{p^n}} _e M)(C_{p^n}/C_{p^n}),$$
or, simplifying notation,
$$\pi_k \THH^{(p^n)}(HA; HM)^{C_{p^n}} \to \m{\HH}_{e}^{C_{p^{n}}}(A;M)_k (C_{p^n}/C_{p^n}).$$
Since these maps are compatible with the restriction maps defined on both sides, there is an induced lax shadow functor 
$$\pi_k \holim_{\substack{\longleftarrow\\R}} \THH^{(p^n)}(HA; HM)^{C_{p^n}} \to \lim_{\substack{\longleftarrow\\R}} \pi_k \THH^{(p^n)}(HA; HM)^{C_{p^n}} \to \lim_{\substack{\longleftarrow\\r}} \m{\HH}_{e}^{C_{p^{n}}}(A;M)_k(C_{p^{n}}/C_{p^{n}}),$$
i.e., a lax shadow functor
$$\pi_k \TR(HA; HM) \to tr_k(A,M)$$
as desired.
\end{proof}

It follows from the proposition above that there is a linearization map relating TR and its algebraic analogue, $tr$, for any $(-1)$-connected ring spectrum.

\begin{cor}\label{cor-lin-map}
There is a linearization map
$$\pi_k \TR(R) \to tr_k(\pi_0(R))$$
for $(-1)$-connected ring spectra $R$.
\end{cor}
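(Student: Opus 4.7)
The plan is to combine Proposition \ref{prop-tr-lax} with the standard Postnikov-type linearization map $R \to H\pi_0(R)$ available for $(-1)$-connected ring spectra, exactly mimicking the classical derivation of the $\THH$-to-$\HH$ linearization recalled in Section~\ref{sec:intro}.

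First, I would recall that for any $(-1)$-connected ring spectrum $R$, the first Postnikov section gives a map of ring spectra $R \to H\pi_0(R)$ that is an isomorphism on $\pi_0$ and zero on higher homotopy groups. Since $\TR(-)$ is functorial in ring spectra (e.g., as the homotopy limit of the $\THH^{(p^n)}(-)^{C_{p^n}}$), this map induces
\[
\pi_k \TR(R) \longrightarrow \pi_k \TR(H\pi_0 R).
\]

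Next, viewing the ordinary ring $\pi_0(R)$ as a constant simplicial ring, I would apply Proposition~\ref{prop-tr-lax} (with trivial coefficients $M = A$) to obtain the natural map
\[
\pi_k \TR(H\pi_0 R) \longrightarrow tr_k(\pi_0 R)
\]
arising from the lax shadow functor induced by the Eilenberg--MacLane spectrum functor. Composing these two morphisms yields the required linearization map
\[
\pi_k \TR(R) \longrightarrow tr_k(\pi_0 R).
\]

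There is essentially no obstacle here, since all of the real work has been carried out in the establishment of Proposition~\ref{prop-tr-lax} (the lax shadow structure of $H$), Proposition~\ref{prop-TR-shadow} (the shadow structure on $\TR$), and Proposition~\ref{prop-tr-shadow} (the shadow structure on algebraic $tr$). The only point worth stating explicitly is naturality: the construction is manifestly natural in $R$, because both the Postnikov linearization $R \to H\pi_0(R)$ and the lax shadow transformation $\phi$ of Proposition~\ref{prop-tr-lax} are natural. This is entirely parallel to how the classical linearization map $\pi_k \THH(R) \to \HH_k(\pi_0 R)$ was obtained at the end of Section~\ref{sec:laxshadow} by composing functoriality of $\THH$ with the lax shadow transformation of Proposition~\ref{prop-EM-lax}.
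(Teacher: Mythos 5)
Your proposal is correct and matches the paper's own argument: the paper likewise composes the map $\pi_k \TR(R) \to \pi_k \TR(H\pi_0(R))$ induced by the ring map $R \to H\pi_0(R)$ with the lax shadow functor of Proposition \ref{prop-tr-lax}. Nothing essential differs.
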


\begin{proof}
For $R$ a $(-1)$-connected ring spectrum, there is a map of ring spectra $R \to H\pi_0(R)$. We then compose with the lax shadow functor of Proposition \ref{prop-tr-lax} to obtain
$$\pi_k \TR(R) \to \pi_k \TR(H\pi_0(R)) \to tr_k(\pi_0(R)).$$
\end{proof}
In \cite{BlGeHiLa}, the authors compute the algebraic $tr$ of $\mathbb{F}_p$, proving that 
\[tr_k(\mathbb{F}_p;p) = \begin{cases} \Z_p &: k=0 \\ 0 &: \text{otherwise}, \end{cases}
\]
which agrees with $\pi_k(\TR(\mathbb{F}_p;p))$. In general, one would like to understand how closely algebraic restriction homology approximates the analogous topological theory, TR. The highly structured linearization map we construct in Proposition \ref{prop-tr-lax} is useful in pursuing this question. Below, we prove that the linearization map is often an isomorphism in degree 0.

\begin{prop}\label{prop-holim}
Let $A$ be a ring, and let $R^1 \lim$ denote the first right derived functor of the limit functor on abelian groups. If 
$$R^1 \lim_{\substack{\longleftarrow\\R}} \pi_1 \THH^{(p^n)}(HA)^{C_{p^n}} = 0,$$
then 
$$\pi_0 \TR(HA) \cong tr_0(A).$$
\end{prop}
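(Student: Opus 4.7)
The plan is to compare the two sides level by level using the lax shadow structure of Proposition \ref{prop-tr-lax}, then pass to the (co)limit using a Milnor sequence.

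First, since $\TR(HA)$ is defined as the homotopy inverse limit along the restriction maps, the standard Milnor short exact sequence for a tower of spectra gives
\[
0 \to R^1 \lim_{\substack{\longleftarrow\\R}} \pi_1 \THH^{(p^n)}(HA)^{C_{p^n}} \to \pi_0 \TR(HA) \to \lim_{\substack{\longleftarrow\\R}} \pi_0 \THH^{(p^n)}(HA)^{C_{p^n}} \to 0.
\]
By the hypothesis on the $R^1\lim$-term, the right-hand map is an isomorphism
\[
\pi_0 \TR(HA) \xrightarrow{\cong} \lim_{\substack{\longleftarrow\\R}} \pi_0 \THH^{(p^n)}(HA)^{C_{p^n}}.
\]

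Next, for each $n$, the linearization map furnished by the lax shadow functor of Proposition \ref{prop-equiv-EM-lax},
\[
\phi_n\colon \pi_0 \THH^{(p^n)}(HA)^{C_{p^n}} \to \m{\HH}^{C_{p^n}}_e(A)_0(C_{p^n}/C_{p^n}),
\]
is an isomorphism in degree $0$ by the degree-$0$ case of the linearization theorem of \cite{BlGeHiLa}, applied to the $(-1)$-connected $C_{p^n}$-ring spectrum $N^{C_{p^n}}_e HA$ (using that $\m{\pi}_0 N^{C_{p^n}}_e HA \cong N^{C_{p^n}}_e A$).

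Finally, because $\phi_n$ arises from a lax shadow functor built out of natural transformations of cyclic bar constructions, it is compatible with the restriction maps in the two towers, as verified in the proof of Proposition \ref{prop-tr-lax}. Thus the collection $\{\phi_n\}$ is an isomorphism of towers of abelian groups, and passing to the inverse limit yields
\[
\lim_{\substack{\longleftarrow\\R}} \pi_0 \THH^{(p^n)}(HA)^{C_{p^n}} \xrightarrow{\cong} \lim_{\substack{\longleftarrow\\r}} \m{\HH}^{C_{p^n}}_e(A)_0(C_{p^n}/C_{p^n}) = tr_0(A).
\]
Composing with the Milnor isomorphism above gives $\pi_0 \TR(HA) \cong tr_0(A)$, as required. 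The only genuine input beyond what is assembled in this section is the degree-$0$ isomorphism statement for the equivariant linearization map, which is a direct invocation of the Blumberg--Gerhardt--Hill--Lawson result.
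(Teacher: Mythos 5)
Your argument is correct and matches the paper's proof: both use the Milnor $\lim^1$ sequence to identify $\pi_0\TR(HA)$ with $\lim_R \pi_0\THH^{(p^n)}(HA)^{C_{p^n}}$ under the stated hypothesis, and both invoke the degree-$0$ linearization isomorphism of Blumberg--Gerhardt--Hill--Lawson (Theorem 5.1 of \cite{BlGeHiLa}), compatibly with the restriction maps, to identify the resulting limit with $tr_0(A)$. Your slightly more explicit verification that the levelwise isomorphisms assemble into a map of towers is just a spelled-out version of what the paper leaves implicit via Proposition \ref{prop-tr-lax}.
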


\begin{proof}
For a ring $A$, by Theorem 5.1 of \cite{BlGeHiLa}, the map 
$$\m{\pi}_0^{C_{p^n}} \THH_{ C_{p^n}}(N^{C_{p^n}} _e HA) \to \m{\HH}^{C_{p^n}}(N^{C_{p^n}} _e A)_0$$
is an isomorphism. Therefore the map
$$\lim_{\substack{\longleftarrow\\R}} \pi_0 \THH^{(p^n)}(HA)^{C_{p^n}} \to \lim_{\substack{\longleftarrow\\r}} \m{\HH}_{e}^{C_{p^{n}}}(A)_0(C_{p^{n}}/C_{p^{n}})$$
is an isomorphism. If 
$$R^1 \lim_{\substack{\longleftarrow\\R}} \pi_1 \THH^{(p^n)}(HA)^{C_{p^n}} = 0,$$
then
$$\pi_0 \TR(HA) = \pi_0 \holim_{\substack{\longleftarrow\\R}} \THH^{(p^n)}(HA)^{C_{p^n}} = \lim_{\substack{\longleftarrow\\R}} \pi_0 \THH^{(p^n)}(HA)^{C_{p^n}},$$
and the conclusion follows.
\end{proof}

Proposition \ref{prop-holim} is analogous to the classical result that the linearization map 
\[
\pi_k(\THH(A)) \to \HH_k(A)
\]
is an isomorphism in degree 0. Classically, the linearization map on THH is also an isomorphism in degree 1. It would be interesting to consider under what conditions the linearization map on TR is similarly an isomorphism in degree 1, although we will not pursue that here.  

\section{Generalized Dennis traces}\label{sec:trace}

In this section, we show that there is a Dennis trace map whose target is $\THH_{C_n}(R)$. Furthermore, we show that this map factors through $\TR_{C_n}(R)$.

\subsection{The twisted Dennis trace.}

Let $(\mathsf V, \otimes, S)$ be a symmetric monoidal, simplicial model category.  As in section \ref{sec:equivHochschild}, let $(\Vaut, \otimes, (S, \textup{Id}_S))$ denote the symmetric monoidal category with objects $(V, \varphi_V)$, where $V$ is an object of  $\cat V$, and $\varphi_V$ is an automorphism of $V$, while morphisms in $\Vaut$ are morphisms in $\mathsf V$ that commute with the automorphism.

Let $(R, \varphi)$ be a monoid in $\Vaut$, and let $\Perf_{(R, \varphi)}$ denote its category of perfect left  modules in $\Vaut$. The objects in this category are perfect $R$-modules $P$ equipped with an automorphism $\gamma: P \to P$ such that the following diagram commutes:

$$\xymatrix{
R \otimes P \ar[r]^-{act} \ar[d]^-{\varphi \otimes \gamma} & P \ar[d]^-{\gamma} \\
R \otimes P \ar[r]^-{act} & P.
}$$

That is, $\gamma$ is an isomorphism of left modules from $P$ to ${}^{\varphi} P$. Here ${}^{\varphi} P$ indicates that the left action on $P$ is twisted by $\varphi: R \to R$, as in Definition \ref{def:phi-twisted_HH}. Let $\K(R, \varphi)$ denote $\K(\Perf_{(R, \varphi)})$.

Let $M$ be an $R$-bimodule. Consider the category of $M$-parametrized endomorphisms of perfect $R$-modules, denoted by Betley \cite{Bet} as $\End(R,M)$. Its objects are maps of left modules $\gamma: P \to M \otimes_R P$, where $P$ is a perfect left $R$-module, and its morphisms are maps of left modules $P \to Q$ that make the relevant diagram commute.

\begin{thm}\label{thm-dennis-trace}
There is a trace map $\K(R, \varphi) \to \HH(R, {}^{\varphi} R)$, given by the composite
$$\K(\Perf_{(R, \varphi)}) = \K(\Aut(R,{}^{\varphi} R)) \to \K(\End(R, {}^{\varphi} R)) \to \HH(R, {}^{\varphi} R).$$
When $R$ is a $C_n$-ring spectrum and $\varphi$ is given by the action of a chosen generator $g \in C_n$, we obtain a trace map $$ \K(\Perf_{(R, g)}) \to \THH_{C_n}(R).$$
\end{thm}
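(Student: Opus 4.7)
The plan is to verify the composite is well-defined as a map of spectra, with the main content being the identification $\Perf_{(R,\varphi)} = \Aut(R, {}^\varphi R)$ after which the middle and right maps are standard generalizations of classical constructions.

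First, I would unpack the identification of categories. An object of $\Perf_{(R,\varphi)}$ is a perfect left $R$-module $P$ equipped with an isomorphism $\gamma \colon P \to P$ satisfying $\gamma \circ \mathrm{act} = \mathrm{act} \circ (\varphi \otimes \gamma)$. This commutativity is precisely the statement that $\gamma$ is a morphism of left $R$-modules $P \to {}^\varphi P$, where ${}^\varphi P$ carries the left action twisted by $\varphi$ as in Definition \ref{def:phi-twisted_HH}. Since there is a canonical iso ${}^\varphi R \otimes_R P \cong {}^\varphi P$ (using the standard right action of $R$ on ${}^\varphi R$), the datum of such a $\gamma$ is equivalent to the datum of an isomorphism $P \xrightarrow{\sim} {}^\varphi R \otimes_R P$, i.e., an object of $\Aut(R, {}^\varphi R)$. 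The morphisms in the two categories match under this correspondence, giving an isomorphism of categories and hence of $\K$-theory spectra.

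Next, I would observe that the inclusion $\Aut(R, {}^\varphi R) \hookrightarrow \End(R, {}^\varphi R)$ is a functor of symmetric monoidal categories compatible with direct sums, and hence induces the first map on $\K$-theory spectra. For the trace map $\K(\End(R, M)) \to \HH(R; M)$, I would generalize Betley's construction \cite{Bet} using the shadow framework. Given a perfect left $R$-module $P$ with $M$-parametrized endomorphism $\alpha \colon P \to M \otimes_R P$, view $P$ as a dualizable $1$-cell in $\sR_{\cat V}$ from $R$ to the unit monoid $S$; perfection supplies the dual $N$ together with evaluation and coevaluation $2$-cells. The shadow trace of Ponto \cite{ponto:shadow}, applied to the $2$-cell produced by $\alpha$ using the shadow structure $\HH_{\cat V}$ from Proposition \ref{prop:HH_is_shadow}, yields an element of $\Sh{R}{M} = \HH_{\cat V}(R; M)$. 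Compatibility with direct sums then promotes this pointwise construction to a map of $\K$-theory spectra, exactly as in the classical Dennis trace.

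For the equivariant statement, specialize to $R$ a $C_n$-ring spectrum and $\varphi$ equal to the action of a chosen generator $g \in C_n$. Proposition \ref{prop:THH_case} then gives a natural identification $\HH(R; {}^g R) \cong \THH_{C_n}(R)$, so the composite assumes the form $\K(\Perf_{(R,g)}) \to \THH_{C_n}(R)$. The main obstacle is constructing the trace $\K(\End(R,M)) \to \HH(R;M)$ at the spectrum level in a way that respects the full $K$-theoretic structure, including functoriality with respect to exact sequences (or direct sums) and appropriate coherence. This is where the bicategorical shadow framework is most useful: the coherence of the cyclic isomorphism $\theta$ underpins the compatibility of the trace with the rotational symmetry appearing in $\HH$, while the fact that $\HH_{\cat V}$ is a genuine shadow (Proposition \ref{prop:HH_is_shadow})---rather than only the twisted structure of Section \ref{Section:twisted}---is precisely what produces an element of $\HH_{\cat V}(R; {}^\varphi R)$ functorially in the endomorphism datum.
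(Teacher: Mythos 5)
Your overall skeleton matches the paper's: identify $\Perf_{(R,\varphi)}$ with $\Aut(R,{}^{\varphi}R)$ (your unpacking of $\gamma\colon P\to{}^{\varphi}P$ versus $P\to{}^{\varphi}R\otimes_R P$ is correct and just makes explicit what the paper states), include this into $\End(R,{}^{\varphi}R)$, compose with a trace out of $\K(\End(R,M))$, and finally use the identification $\HH(R;{}^{g}R)\cong\THH_{C_n}(R)$ of Proposition \ref{prop:THH_case} for the equivariant statement. The divergence, and the genuine gap, is in the middle step. The paper does not construct the trace $\K(\End(R,M))\to\HH(R;M)$; it cites an existing spectrum-level construction (Section 3 of \cite{DM}), of which the classical Dennis trace is the special case $M=R$. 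You instead propose to build it from the bicategorical shadow trace: view a perfect $P$ as a dualizable $1$-cell $R\to S$, take Ponto's trace of the $2$-cell coming from $\alpha\colon P\to M\otimes_R P$, and then claim that ``compatibility with direct sums promotes this pointwise construction to a map of $\K$-theory spectra.''

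That last claim is where the argument fails as written. The shadow trace of a single parametrized endomorphism produces a map $\Sh{}{U_S}\to\Sh{}{M}$, i.e.\ essentially a point of $\HH_{\cat V}(R;M)$ up to homotopy; doing this object by object, even additively, only pins down a homomorphism on $\pi_0$ of $\K$-theory (this is precisely the content of Remark \ref{rem-HS} and of Theorem 1.2 of \cite{CLMPZ}, which identify the effect of the already-constructed trace on $\pi_0$ with the bicategorical trace). A map of spectra out of $\K(\End(R,M))$ requires coherence data far beyond pointwise traces and direct-sum compatibility --- this is the actual content of the Dennis trace construction (via cyclic nerves of the endomorphism category mapping to the cyclic bar construction, or the machinery developed in \cite{CLMPZ}), and it is not supplied by Proposition \ref{prop:HH_is_shadow} alone. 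You even flag this as ``the main obstacle,'' but then assert rather than argue that the shadow framework resolves it. To repair the proof, either cite a spectrum-level construction of the trace for endomorphism categories with coefficients (as the paper does with \cite{DM}, or via \cite{CLMPZ}), or carry out such a construction explicitly; with that in hand, the rest of your argument, including the equivariant specialization via Proposition \ref{prop:THH_case}, goes through.
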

\begin{proof}

Observe that $\End(R, {}^{\varphi} R)$ is very similar to $\Perf_{(R, \varphi)}$, except that in $\Perf_{(R, \varphi)}$, $\gamma$ is required to be an isomorphism. We may therefore denote $\Perf_{(R, \varphi)}$ by $\Aut(R,{}^{\varphi} R)$, and it includes naturally into the category $\End(R, {}^{\varphi} R)$.

The K-theory of $\End(R,M)$, denoted $\K(R,M)$, admits a trace map to $\HH(R,M)$ (see, e.g., Section 3 of \cite{DM}), which can be considered as a twisted version of the trace $\K(\End(R,R)) \to \HH(R)$, through which the Dennis trace $\K(R) \to \HH(R)$ factors. We consider this trace map when $M = {}^{\varphi} R$. The composite of this trace with the map from $\K(\Perf_{(R, \varphi)})$ to $\K(\End(R, {}^{\varphi} R)) $ yields a twisted Dennis trace from $\K(R, \varphi)$ to $\HH(R, {}^{\varphi} R).$

When $R$ is a $C_n$-ring spectrum, and $\varphi$ is given by the action of a chosen generator $g \in C_n$, we therefore obtain a trace map $$ \K(\Perf_{(R, g)}) \to \THH(R; {}^g R) \cong \THH_{C_n}(R).$$

\end{proof}

If $R$ is a $C_n$-ring spectrum, let $\Perf^{C_n}_R$ denote the category of perfect left modules over $R$ in $C_n$-spectra. Objects in this category are modules over $R$ in $C_n$-spectra that are perfect as $R$-modules. 

\begin{cor}
There is a trace map $\K(\Perf^{C_n}_R) \to \THH_{C_n}(R)$.
\end{cor}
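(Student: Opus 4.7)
The plan is to construct a natural functor $\Perf^{C_n}_R \to \Perf_{(R, g)}$ and then appeal directly to Theorem \ref{thm-dennis-trace} with $\varphi$ taken to be multiplication by the chosen generator $g \in C_n$. Given a perfect left $R$-module $P$ in $C_n$-spectra, the $g$-action on $P$ provides an automorphism $\gamma_P := g \cdot -\colon P \to P$ of the underlying spectrum. The $C_n$-equivariance of the action map $R \wedge P \to P$ is precisely the identity $g(r \cdot p) = (gr) \cdot (gp)$, which, writing $\varphi\colon R \to R$ for multiplication by $g$, says exactly that $\gamma_P$ fits into the commutative square appearing in the definition of $\Perf_{(R, \varphi)}$ recalled just before Theorem \ref{thm-dennis-trace}. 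Thus $(P, \gamma_P)$ is an object of $\Perf_{(R, g)}$, and since $C_n$-equivariant $R$-module maps between such objects are tautologically compatible with their $g$-actions, the assignment $P \mapsto (P, \gamma_P)$ extends to a functor.

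Applying algebraic $\K$-theory then yields a map $\K(\Perf^{C_n}_R) \to \K(\Perf_{(R, g)})$, and composing with the twisted Dennis trace supplied by Theorem \ref{thm-dennis-trace} produces the desired trace
$$\K(\Perf^{C_n}_R) \to \K(\Perf_{(R, g)}) \to \THH_{C_n}(R).$$

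The only point requiring verification is that the forgetful-type functor $P \mapsto (P, \gamma_P)$ preserves whatever Waldhausen structure (cofibrations and weak equivalences) is being used on both sides to induce a map on $\K$-theory. This is not a serious obstacle, since the relevant cofibrations and weak equivalences of perfect $R$-modules in $C_n$-spectra are detected on the underlying spectra, and any morphism that is equivariant with respect to the full $C_n$-action is in particular compatible with the single automorphism $g \cdot -$ used to build an object of $\Perf_{(R,g)}$. If one preferred, one could bypass this bookkeeping entirely by first passing through the equivalence $\Perf^{C_n}_R \simeq \Perf_{R \rtimes C_n}$ to a genuinely non-equivariant category and then observing that the resulting objects naturally carry the required twisted-automorphism structure; either route leads to the same map.
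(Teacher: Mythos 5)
Your proposal is correct and follows essentially the same route as the paper: the paper also defines the inclusion $\Perf^{C_n}_R \to \Perf_{(R,g)}$ by sending $P$ to $(P,g)$ (the semilinear automorphism coming from the $C_n$-action), applies $\K$-theory, and composes with the twisted Dennis trace of Theorem \ref{thm-dennis-trace}. Your explicit check that equivariance of the action map yields the required commuting square, and the remark on compatibility of the Waldhausen structures, are just spelled-out versions of what the paper leaves implicit.
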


\begin{proof}
Let $g$ denote a chosen generator of $C_n$. We can include $\Perf^{C_n}_R$ into $\Perf_{(R,g)}$, by sending a module $P$ to $(P,g) \in \Perf_{(R,g)}$. This is a twisted version of the functor $\Perf_R \to \End(R,R)$ that sends $P$ to $(P,id)$ and induces the map $\K(R) \to \K(\End(R,R))$. We thus obtain a trace map
$$\K(\Perf^{C_n}_R) \to \THH_{C_n}(R)$$
given by the composite
$$\K(\Perf^{C_n}_R) \to \K(\Perf_{(R, g)}) = \K(\Aut(R,{}^g R)) \to \K(\End(R,{}^g  R)) \to \THH(R; {}^g R).$$
\end{proof}

\begin{rem} \label{rem-HS}
When $R$ is an ordinary ring, we can identify the trace $\K_0(R, \varphi) \to \HH_0(R; {}^{\varphi} R)$ as sending $\gamma: P \to {}^{\varphi} P$ to its Hattori-Stallings trace, i.e.,  its bicategorical trace in the category of rings, bimodules, and bimodule maps. Consider the shadow $\HH_0$ on this bicategory.  The trace of a 2-cell $\gamma: P \otimes \mathbb{Z} \to {}^\varphi R \otimes_R P$, where we regard $P$ as an $R-\mathbb{Z}$ bimodule,  is a map
$$\HH_0(\mathbb{Z}) \to \HH_0(R; {}^\varphi R)$$
and thus an element of $\HH_0(R; {}^\varphi R)$. For more details on the Hattori-Stallings trace as a bicategorical trace, see Section 5 of \cite{PS}. More generally, Theorem 1.2 of \cite{CLMPZ} states that on $\pi_0$, this twisted Dennis trace takes an endomorphism $f: P \to {}^g P$ to its bicategorical trace $\mathbb{S} \to \THH(R; {}^g R) \cong \THH_{C_n}(R)$.
\end{rem}

This twisted Dennis trace factors through the $C_n$-twisted $\TR$ of \cite{AnBlGeHiLaMa}, $\TR_{C_n}$. The following lemma will be useful in the proof.

\begin{lem} \label{lem-THHs-agree}
Let $R$ be a $C_n$-ring spectrum, and $g \in C_n$ a generator. Let $p$ be a prime that does not divide $n$, and $k$ an integer such that $p^k \equiv 1$ (mod $n$). Then
$$\THH^{(p^k)}(R; {}^g R) \simeq \THH_{C_n} (R)$$
as $C_{p^k}$-spectra.
\end{lem}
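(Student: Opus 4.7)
The plan is to identify both sides as realizations of simplicial $C_{p^k}$-spectra via edgewise subdivision, and then match them via a permutation of smash factors that exploits the congruence $p^k \equiv 1 \pmod n$. Throughout, I view $\THH_{C_n}(R) = |B^{\text{cyc}, C_n}_\bullet R|$ as the realization of a $\Lambda_n$-object in spectra in the sense of \cite{BHM}, whose realization inherits an $S^1$-action; restricting to $C_{p^k} \subset S^1$ (well-defined since $p \nmid n$) gives the $C_{p^k}$-equivariant structure on the right-hand side of the claimed equivalence.

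First, I will invoke an edgewise subdivision theorem for $\Lambda_n$-objects: for such an $X_\bullet$, the realization $|\mathrm{sd}_{p^k} X_\bullet|$ equipped with the tautological $C_{p^k}$-action is naturally $C_{p^k}$-equivalent to $|X_\bullet|$ equipped with its restricted $C_{p^k} \subset S^1$-action. Applied to $X_\bullet = B^{\text{cyc}, C_n}_\bullet R$, this reduces the claim to exhibiting an isomorphism of simplicial $C_{p^k}$-spectra
$$B^{\text{cyc}, C_{p^k}}_\bullet\bigl(N^{C_{p^k}}_e R;\, N^{C_{p^k}}_e \,{}^g R\bigr) \cong \mathrm{sd}_{p^k} B^{\text{cyc}, C_n}_\bullet R.$$

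Second, I will construct this isomorphism level by level. At simplicial level $q$, both sides have underlying non-equivariant spectrum $R^{\wedge p^k(q+1)}$, and the isomorphism is given by an explicit permutation of smash factors. On the left, the $C_{p^k}$-action cyclically permutes the $p^k$ factors inside each of the $q+1$ norm blocks coming from $N^{C_{p^k}}_e$. On the right, the tautological $C_{p^k}$-action, inherited from the cyclic generator at level $p^k(q+1)-1$ of the $\Lambda_n$-object, corresponds to an $n$-block rotation of the $p^k(q+1)$ smash factors together with the $g$-twists prescribed by the $\Lambda_n$-structure. Writing $p^k = 1 + mn$, multiplication by $n$ is an automorphism of $C_{p^k}$, and the permutation can be chosen so as to intertwine the two $C_{p^k}$-actions via this automorphism; the resulting identification of smash factors is then compatible with the face and degeneracy maps, including the twisted face map $d_q$, which on one side involves the generator of $C_{p^k}$ acting on the coefficient norm and on the other side involves the $g$-twist coming from the $\Lambda_n$-structure.

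The main obstacle will be the combinatorial bookkeeping in this level-wise identification: the permutation on $p^k(q+1)$ smash factors must simultaneously intertwine the two different $C_{p^k}$-actions and commute with all simplicial operators. The congruence $p^k \equiv 1 \pmod n$ is precisely what makes such a permutation possible, since it aligns a single cyclic rotation in the norm-based structure with $n$ rotations in the subdivided $\Lambda_n$-structure, so that the $g$-twist absorbed in $p^k$ applications of the subdivided generator matches the $p^k$-periodicity of the norm-cyclic action.
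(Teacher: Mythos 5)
Your route---subdivide the twisted cyclic nerve and compare it levelwise with the norm-based bar construction---is genuinely different from the paper's proof, which instead quotes from \cite{AnBlGeHiLaMa} the $S^1$-equivariant equivalence $\THH_{C_n}(R)\simeq \THH_{C_{np^k}}(N_{C_n}^{C_{np^k}}R)$, splits $C_{np^k}\cong C_n\times C_{p^k}$, rewrites the twist by a generator $g'g$ as ${}^{g'}N_{C_n}^{C_{np^k}}({}^g R)$ using $p^k\equiv 1 \pmod n$, and restricts along $C_{p^k}\subset C_{np^k}$ via $\iota_{e\times H}N^{K\times H}_{K\times e}X\cong N^H_e\iota_e X$. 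But as written your argument has genuine gaps. First, the subdivision statement you invoke is not correct as stated: for a $\Lambda_n$-object the operator $t^{q+1}$ on level $q$ of $\mathrm{sd}_{p^k}$ has order $np^k$ (because $t_m^{n(m+1)}=\mathrm{id}$, not $t_m^{m+1}=\mathrm{id}$), so there is no ``tautological $C_{p^k}$-action'' generated by the cyclic operator; the subgroup $C_{p^k}\subset S^1$ is modeled by $t^{n(q+1)}$, i.e.\ rotation by $n(q+1)$ smash factors together with $g$-twists on the factors that wrap around. Your description of the right-hand action and your statement that the permutation need only intertwine the two actions ``via the automorphism multiplication by $n$'' are symptoms of this: an identification that is equivariant only up to a group automorphism $\alpha$ proves $\THH^{(p^k)}(R;{}^gR)\simeq\alpha^*\,\THH_{C_n}(R)$, which is weaker than the lemma. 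With the correct operator you can match canonical generators on the nose (both actions on positions are free), and the multiplication-by-$n$ then appears only as a reindexing inside each block, not as a twist of the equivariance.

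Second, once generators are matched, the face-compatible identification is essentially forced---the $i$th norm block must correspond to the positions congruent to $i$ mod $q+1$, which are exactly the orbits of $t^{n(q+1)}$---and everything you have actually verified (orbit structure, triviality of the $g$-monodromy $g^n=e$ around each orbit, compatibility with the inner faces and degeneracies) uses only $\gcd(p,n)=1$. The hypothesis $p^k\equiv 1\pmod n$ enters solely in the step you defer as bookkeeping: the comparison of the last, twisted face, where the $g'$-rotation $\alpha_q$ of the $C_{p^k}$-twisted bar construction, the $g$-twisted left action of $N_e^{C_{p^k}}R$ on $N_e^{C_{p^k}}({}^gR)$, and the $g$-twist carried by $d_{p^k(q+1)-1}$ in the $\Lambda_n$-structure must all cancel; without the congruence the coefficient comes out as ${}^{g^c}R$ for the wrong power $c$, so this check is the actual content of the lemma and cannot be omitted. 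Finally, your comparison is made with naive, simplicial-level actions; to conclude an equivalence of genuine $C_{p^k}$-spectra you must also track the change-of-universe functors in the definition of $\THH_{C_n}$ and identify the levelwise indexed-smash $C_{p^k}$-structure on the norm side with the restricted $S^1$-structure on the realization---precisely the point-set work packaged in the results of \cite{AnBlGeHiLaMa}. If you want to keep your approach, the efficient repair is to cite the subdivision isomorphism $\mathrm{sd}_{r}B^{\text{cyc},C_n}_\bullet(R)\cong B^{\text{cyc},C_{rn}}_\bullet(N_{C_n}^{C_{rn}}R)$ from \cite{AnBlGeHiLaMa} with $r=p^k$ rather than reproving it, and then carry out only the splitting, twist manipulation, and restriction---which is the paper's proof.
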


\begin{proof}
Recall from \cite{AnBlGeHiLaMa} that $\THH_{C_n} (R) \simeq \THH_{C_{n p^k}}(N_{C_n} ^{C_{n p^k }} R)$ as $S^1$-spectra. Since $p$ does not divide $n$, $C_{np^k} \cong C_n \times C_{p^k}$; let $g'$ denote a generator of $C_{p^k}$, so that $g'g$ is a generator of $C_{np^k}$. As $C_{np^k}$-spectra,
$$\THH_{C_{n p^k}}(N_{C_n} ^{C_{n p^k }} R) \simeq \THH(N_{C_n} ^{C_{n p^k }} R; {}^{g'g}N_{C_n} ^{C_{n p^k }} R) \simeq  \THH(N_{C_n} ^{C_{n p^k }} R; {}^{g'g^{p^k}}N_{C_n} ^{C_{n p^k }} R).$$
The second equivalence holds because $p^k \equiv 1$ (mod $n$). Note that the last term is equivalent to $\THH(N_{C_n} ^{C_{n p^k }} R; {}^{g'}N_{C_n} ^{C_{n p^k }} ({}^g R))$ as a $C_{np^k}$-spectrum. We now restrict to $C_{p^k}$-spectra. The restriction functor respects smash products and geometric realization, and therefore commutes with $\THH$. In addition, if $K,H$ are finite groups and $X$ is a $K$-spectrum, examining the indexed smash product defining the norm yields $\iota_{e \times H} N^{K \times H} _{K \times e} X \cong N_e ^H \iota_e X$ as $H$-spectra. Therefore we obtain
$$\iota_{C_{p^k}}\THH(N_{C_n} ^{C_{n p^k }} R; {}^{g'}N_{C_n} ^{C_{n p^k }} ({}^g R)) \simeq \THH(N_e ^{C_{ p^k }} \iota_e R; {}^{g'}N_e ^{C_{p^k }} \iota_e ({}^g R)),$$
which is equivalent as a $C_{p^k}$-spectrum to 
$$\THH_{C_{p^k}}(N_e ^{C_{p^k}} \iota_e R; N_e ^{C_{p^k}} \iota_e ({}^g R) ) =  \THH^{(p^k)}(R; {}^g R)$$
as required.
\end{proof}

\begin{prop}\label{prop-TR-trace}
The twisted Dennis trace $\K(\Perf_{(R,g)}) \to \THH_{C_n}(R)$ factors through $\TR_{C_n}(R;p)$ for $p$ coprime to $n$.
\end{prop}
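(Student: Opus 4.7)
My plan is to combine two ingredients: the factorization of the classical twisted Dennis trace through $\TR$ with coefficients, as established in the work of Campbell--Lind--Malkiewich--Ponto--Zakharevich \cite{CLMPZ}, together with Lemma \ref{lem-THHs-agree}, which identifies $\THH^{(p^k)}(R; {}^g R)$ with $\THH_{C_n}(R)$ as $C_{p^k}$-spectra when $p^k\equiv 1\pmod n$.

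First I would recall from \cite{CLMPZ} that the twisted Dennis trace $\K(\Aut(R, {}^g R))\to \THH(R; {}^g R)$ constructed in Theorem \ref{thm-dennis-trace} lifts naturally to a map
\[
\K(\Aut(R, {}^g R))\longrightarrow \TR(R; {}^g R) \;=\; \holim_{k}\,\THH^{(p^k)}(R; {}^g R)^{C_{p^k}},
\]
where the homotopy limit is taken along the restriction maps induced by the cyclotomic structure $\Phi^{C_p}\THH^{(p^k)}(R;{}^g R)\simeq \THH^{(p^{k-1})}(R; {}^g R)$ (cf.\ the discussion preceding Proposition \ref{prop-tr-shadow}). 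This already gives a lift of the twisted Dennis trace through a homotopy limit; it remains to identify this limit with $\TR_{C_n}(R;p)$ and to ensure that composition with the canonical map $\TR_{C_n}(R;p)\to \THH_{C_n}(R)$ recovers the original trace. Since $p$ is coprime to $n$, the multiplicative order $d$ of $p$ modulo $n$ is finite, so the set of $k$ with $p^k\equiv 1\pmod n$ is cofinal in $\N$. For each such $k$, Lemma \ref{lem-THHs-agree} supplies a $C_{p^k}$-equivariant equivalence $\THH^{(p^k)}(R; {}^g R)\simeq \THH_{C_n}(R)$; taking $C_{p^k}$-fixed points and then passing to the homotopy limit along the cofinal subsequence $k=d,2d,3d,\dots$ yields
\[
\TR(R; {}^g R)\;\simeq\;\holim_{k\colon\, p^k\equiv 1\,(n)}\THH_{C_n}(R)^{C_{p^k}}\;\simeq\;\TR_{C_n}(R;p),
\]
the last identification using that the $p$-cyclotomic structure on $\THH_{C_n}(R)$ of \cite{AnBlGeHiLaMa} defines the homotopy limit system for $\TR_{C_n}(R;p)$ and that homotopy limits over cofinal subsystems agree.

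The main obstacle is verifying that the equivalences of Lemma \ref{lem-THHs-agree} intertwine the two restriction towers: the one defining $\TR(R; {}^g R)$ arising from the cyclotomic structure on $\THH^{(p^k)}(R;{}^g R)$, and the one defining $\TR_{C_n}(R;p)$ coming from the $p$-cyclotomic structure of \cite{AnBlGeHiLaMa} on $\THH_{C_n}(R)$. To establish this, I would trace through the proof of Lemma \ref{lem-THHs-agree} and check that the chain of $C_{p^k}$-equivariant equivalences it produces is compatible with geometric $C_p$-fixed points as one moves from level $k$ to level $k-1$ in the tower. The key facts to invoke are that the norm functor satisfies $\iota_{e\times H}N^{K\times H}_{K\times e}X\cong N^H_e\iota_e X$ and that geometric fixed points commute with smash products and with geometric realization, together with the cyclotomic identification $\Phi^{C_p}\THH^{(p^k)}(R;M)\simeq \THH^{(p^{k-1})}(R;M)$ used earlier. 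Once this compatibility is in place, the composite
\[
\K(\Perf_{(R,g)})\;\longrightarrow\;\TR(R;{}^g R)\;\simeq\;\TR_{C_n}(R;p)\;\longrightarrow\;\THH_{C_n}(R)
\]
is the desired factorization of the twisted Dennis trace.
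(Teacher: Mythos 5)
Your proposal is correct and follows essentially the same route as the paper: invoke the factorization $\K(R,{}^gR)\to \TR(R;{}^gR)$ from \cite{CLMPZ}, then identify $\TR(R;{}^gR;p)$ with $\TR_{C_n}(R;p)$ by comparing the two restriction towers along a cofinal subsequence of exponents $k$ with $p^k\equiv 1 \pmod n$ using Lemma \ref{lem-THHs-agree}. The only differences are cosmetic --- you index the cofinal subsequence by the multiplicative order of $p$ modulo $n$ where the paper uses Euler's totient $\varphi(n)$, and you explicitly flag the check that the equivalences of Lemma \ref{lem-THHs-agree} intertwine the restriction maps, a compatibility the paper asserts more briefly.
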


\begin{proof}
By Theorem 1.3 of \cite{CLMPZ}, the twisted Dennis trace $\K(R, {}^g R) \to \THH(R, {}^g R)$ factors through $\TR$ with coefficients, $\K(R, {}^g R) \to \TR(R, {}^g R)$. On $\pi_0$, this takes an endomorphism $f: P \to {}^g P$ to the bicategorical trace of its $k$-fold composite $f^k: P \to {}^{g^k}P$ for all $k \in \mathbb{N}$.

We claim that when $p$ is coprime to $n$, $\TR(R, {}^g R ; p) \simeq \TR_{C_n}(R; p)$. We prove this by comparing the inverse systems that form both spectra. Recall that $\TR(R, {}^g R ; p)$ is the inverse limit along the restriction maps 
$$R: \THH^{(p^k)}(R; {}^g R)^{C_{p^k}} \to \THH^{(p^{k-1})}(R; {}^g R)^{C_{p^{k-1}}}.$$
Note that by Fermat's little theorem, for $p$ coprime to $n$, $p^{\varphi(n)} \equiv 1$ (mod $n$), where $\varphi(n)$ denotes Euler's totient function. We can therefore look instead at the composition of $\varphi(n)$ restriction maps at a time,
$$\THH^{(p^{j \varphi(n)})}(R; {}^g R)^{C_{p^{j \varphi(n)}}} \to \THH^{(p^{(j-1) \varphi(n)})}(R; {}^g R)^{C_{p^{(j-1) \varphi(n)}}}.$$
By Lemma \ref{lem-THHs-agree} above, this agrees with the system
$$\THH_{C_n}(R)^{C_{p^{j \varphi(n)}}} \to \THH_{C_n}(R)^{C_{p^{(j-1) \varphi(n)}}}$$
and the limit of this system agrees with the limit of the system 
$$\THH_{C_n}(R)^{C_{p^{k+1}}} \to \THH_{C_n}(R)^{C_{p^k}}$$
which is $\TR_{C_n}(R;p)$. 
\end{proof}

\begin{rem}\label{rem-twisted-TR}
This suggests that $\TR(R, {}^g R)$ is a good generalization for $\TR_{C_n}(R)$, which also works at primes that divide $n$.
\end{rem}

\subsection{Relationship to equivariant algebraic \texorpdfstring{$\K$}{K}-theory}
We have shown above that there is a twisted Dennis trace map 
\[
\K(\Perf_{(R,g)}) \to \THH_{C_n}(R)
\] that factors through $\TR_{C_n}(R)$. A natural question to ask is whether twisted topological Hochschild homology also receives a trace map from equivariant algebraic $K$-theory. Merling \cite{Merling} defined a genuine equivariant algebraic $\K$-theory $G$-spectrum, $\K_G(R)$, for a $G$-ring $R$. There is related work on equivariant algebraic $K$-theory due to Barwick and Barwick-Glasman-Shah \cite{Barwick, BGS} and Malkiewich--Merling \cite{MM}. 

The equivariant algebraic $K$-theory $\K_G(R)$ of a $G$-ring $R$ is roughly the $\K$-theory of the category of perfect $R$-modules, which admits an action of $G$ sending $P$ to ${}^g P$. Building on Merling's work \cite{Merling}, Malkiewich and Merling showed in Theorem 1.3 of \cite{MM} that for a $G$-ring spectrum $R$, the fixed point spectrum $\K_G(R)^G$ is equivalent to the $K$-theory of a category of perfect $R$-modules on which $G$ acts semilinearly. In particular, if $G$ is $C_n$ with generator $g$, then $\K_{C_n}(R)^{C_n} \simeq \K(\Perf_{(R, g)})$. Therefore we conclude the following as a corollary of Theorem \ref{thm-dennis-trace}.

\begin{cor}\label{cor:twisted}
For $R$ a $C_n$-ring spectrum, the twisted Dennis trace is a map
$$\K_{C_n}(R)^{C_n} \to \THH_{C_n}(R).$$
\end{cor}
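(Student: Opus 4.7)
The proof is essentially a one-step deduction, assembling two ingredients that are already in place. The plan is to identify the fixed-point spectrum $\K_{C_n}(R)^{C_n}$ with the $\K$-theory of the category $\Perf_{(R,g)}$ of perfect $(R,g)$-modules in $\Vaut$ (for $\cat V = \cat{Sp}^{C_n}$ and $g\in C_n$ a chosen generator), and then apply the twisted Dennis trace constructed in Theorem \ref{thm-dennis-trace}.

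First, I would invoke \cite[Theorem 1.3]{MM}, which provides a natural equivalence
\[
\K_{C_n}(R)^{C_n}\simeq \K\bigl(\Perf_{(R,g)}\bigr),
\]
where the right-hand side is the $\K$-theory of the category of perfect left $R$-modules equipped with a semilinear automorphism, i.e., an isomorphism of left modules $\gamma\colon P\to {}^{g}P$ with ${}^{g}P$ the module whose $R$-action is twisted through the automorphism $g\colon R\to R$. This is precisely the category denoted $\Perf_{(R,g)}$ in the setup preceding Theorem \ref{thm-dennis-trace}, with $\cat V = \cat{Sp}^{C_n}$ and $\varphi$ taken to be left multiplication by $g$.

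Next, I would take the twisted Dennis trace produced in Theorem \ref{thm-dennis-trace},
\[
\K\bigl(\Perf_{(R,g)}\bigr)=\K(R,g)\longrightarrow \THH(R;{}^{g}R)\cong \THH_{C_n}(R),
\]
where the last isomorphism is the identification of Proposition \ref{prop:THH_case}. Composing with the Malkiewich--Merling equivalence yields the required trace map
\[
\K_{C_n}(R)^{C_n}\longrightarrow \THH_{C_n}(R).
\]
There is no real obstacle here; the entire content lies in the construction of Theorem \ref{thm-dennis-trace} (itself factoring through the trace on $\K\bigl(\End(R,{}^{g}R)\bigr)\to \HH(R,{}^{g}R)$) together with the input from \cite{MM}. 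The only point requiring a line of care is to check that the identification of $\K_{C_n}(R)^{C_n}$ in \cite{MM} uses the same notion of semilinear automorphism as our $\Perf_{(R,g)}$, which follows directly from comparing the defining diagrams.
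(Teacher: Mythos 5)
Your proposal is correct and matches the paper's own argument: the corollary is deduced by combining the Malkiewich--Merling identification $\K_{C_n}(R)^{C_n}\simeq \K(\Perf_{(R,g)})$ from \cite[Theorem 1.3]{MM} with the twisted Dennis trace of Theorem \ref{thm-dennis-trace} and the identification $\THH(R;{}^{g}R)\cong\THH_{C_n}(R)$ of Proposition \ref{prop:THH_case}. No issues.
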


Work of Horev \cite{Horev} shows that twisted topological Hochschild homology can be viewed through the lens of equivariant factorization homology. In particular, Horev proves in Proposition 7.2.2 of \cite{Horev} that for a $C_n$-ring spectrum $R$, 
\[\THH_{C_n}(R) \simeq \Phi^{C_n} \int_{S^1 _{rot}} R.\] Here $\int_{S^1 _{rot}} R$ denotes the $C_n$-equivariant factorization homology of $S^1$ (with the rotation action of $C_n$) with coefficients in $R$. The $C_n$-spectrum $\int_{S^1 _{rot}} R$ is non-equivariantly equivalent to $\THH(R)$, but has a ``diagonal" action of $C_n$-- that is, $C_n$ acts on both $S^1$ and $R$. This perspective on twisted topological Hochschild homology leads to the following conjecture.

\begin{conj}
The twisted Dennis trace  of Corollary \ref{cor:twisted} arises from a $C_n$-equivariant map $\K_{C_n} (R) \to \int_{S^1 _{rot}} R$.
\end{conj}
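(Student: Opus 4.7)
The strategy is to construct a $C_n$-equivariant map of genuine $C_n$-spectra $T \colon \K_{C_n}(R) \to \int_{S^1_{rot}} R$ directly, by equivariantly enhancing the factorization-homology construction of the classical Dennis trace, and then to verify that applying categorical $C_n$-fixed points followed by the canonical comparison map $\left(\int_{S^1_{rot}} R\right)^{C_n} \to \Phi^{C_n}\int_{S^1_{rot}} R \simeq \THH_{C_n}(R)$ recovers the twisted Dennis trace of Corollary \ref{cor:twisted}.

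First, I would use the Malkiewich--Merling identification $\K_{C_n}(R)^{C_n} \simeq \K(\Perf_{(R,g)})$ to recognize the fixed points of $\K_{C_n}(R)$ as the $K$-theory of perfect modules equipped with a semilinear isomorphism $\gamma \colon P \to {}^g P$. This is precisely the input needed to produce a $C_n$-fixed point of $\int_{S^1_{rot}} R$: place a copy of $P$ at each point of a $C_n$-orbit on the rotation circle and use $\gamma$ to equivariantly glue the copies, exploiting the universal property of equivariant factorization homology as described by Horev. The analogous construction for intermediate subgroups $H \subseteq C_n$ uses $H$-semilinear modules, so that the assignment is natural in the $C_n$-orbit category, hence assembles into a map of genuine $C_n$-spectra after equivariant group completion.

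Next, I would package this pointwise assignment categorically. The cleanest route is to extend the bicategorical shadow framework of Sections \ref{sec:shadow}--\ref{sec:equivHochschild} to an ``equivariant shadow'' on $\sR_{\cat{Sp}^{C_n}}$ (anticipated in the remark following Proposition \ref{prop:fixedpointsshadow}) and then define $T$ via equivariant bicategorical traces of the universal family of endomorphisms on $\Perf^{C_n}_R$, passing to $C_n$-equivariant group completion. Horev's identification $\THH_{C_n}(R) \simeq \Phi^{C_n}\int_{S^1_{rot}} R$ then allows one to compare the resulting map on $C_n$-fixed points with the twisted Dennis trace: by Remark \ref{rem-HS}, both maps agree on $\pi_0$ via the Hattori--Stallings bicategorical trace of $\gamma \colon P \to {}^g P$, and this identification should propagate to the full spectrum using the universality of $K$-theory as the group completion of the symmetric monoidal $C_n$-category of perfect modules.

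The main obstacle will be the construction of $T$ as a genuinely $C_n$-equivariant map of $C_n$-spectra rather than merely a compatible system of fixed-point maps indexed by subgroups of $C_n$. Each of the standard classical constructions of the Dennis trace (B\"okstedt's original approach, cyclic nerves of Waldhausen categories, bicategorical traces of dualizable 1-cells) has nontrivial content, and lifting any of them to match both Merling's definition of $\K_{C_n}$ and Horev's equivariant factorization homology requires careful bookkeeping of the semilinear structure. A plausible resolution is to use the universal property of $\int_{S^1_{rot}} R$ as the free $E_{C_n\textrm{-}S^1}$-algebra on $R$, reducing the construction of $T$ to producing a $C_n$-equivariant $R$-module map out of the universal endomorphism module on $\K_{C_n}(R)$; the compatibility with Corollary \ref{cor:twisted} then follows from the uniqueness part of this universal property.
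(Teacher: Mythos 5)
First, note that the statement you are proving is stated in the paper as a \emph{conjecture}: the paper offers no proof of it, only a heuristic strategy (contingent on work in progress of Angelini-Knoll, Merling, and P\'eroux) that would construct an $S^1\times C_n$-equivariant trace $\K_{C_n}(R)\to N_{e\times C_n}^{S^1\times C_n}R$ and then deduce the twisted Dennis trace of Corollary \ref{cor:twisted} by passing to fixed and geometric fixed points along conjectural diagonal formulas for the norm. Your plan takes a different route (directly through Horev's equivariant factorization homology and a hoped-for ``equivariant shadow'' framework), but like the paper's sketch it is a programme rather than a proof, and it leaves the essential content of the conjecture unestablished.

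Concretely, there are three gaps. (1) The heart of the conjecture is the construction of a single genuinely $C_n$-equivariant map $\K_{C_n}(R)\to \int_{S^1_{rot}}R$, not a compatible family of maps on $H$-fixed points for $H\subseteq C_n$; you acknowledge this but your ``plausible resolution'' does not close it. Factorization homology over $S^1$ is computed as a colimit over a disk category and its universal property governs maps \emph{out} of it (via excision/monadicity over disks); it is not a free algebra in a sense that lets you produce a map \emph{into} $\int_{S^1_{rot}}R$ from ``a $C_n$-equivariant $R$-module map out of the universal endomorphism module,'' and no uniqueness statement of the kind you invoke is available. (2) The comparison with Corollary \ref{cor:twisted} is only argued on $\pi_0$ via the Hattori--Stallings/bicategorical trace of Remark \ref{rem-HS}; the claim that this ``propagates to the full spectrum using the universality of $K$-theory as group completion'' is not valid, since two maps out of a $K$-theory spectrum that agree on $\pi_0$ need not agree, and the universal characterizations of $K$-theory that could force agreement (initiality among additive invariants, or uniqueness of trace-like transformations) are exactly what has not been established in the genuine $C_n$-equivariant setting. (3) Your gluing construction, which places copies of a semilinear module $(P,\gamma)$ around a $C_n$-orbit in $S^1$, is meant to produce categorical $C_n$-fixed points of $\int_{S^1_{rot}}R$, but Horev's identification is $\THH_{C_n}(R)\simeq \Phi^{C_n}\int_{S^1_{rot}}R$, through \emph{geometric} fixed points; the passage from your putative categorical fixed-point data through the comparison map $(\int_{S^1_{rot}}R)^{C_n}\to \Phi^{C_n}\int_{S^1_{rot}}R$, and its compatibility with the Malkiewich--Merling identification $\K_{C_n}(R)^{C_n}\simeq \K(\Perf_{(R,g)})$ and with the trace of Theorem \ref{thm-dennis-trace}, is precisely the delicate bookkeeping the conjecture is about and is not supplied. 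In short, the proposal is a reasonable alternative strategy, but it does not constitute a proof, and the paper itself does not contain one to compare it against.
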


 Work in progress of Angelini-Knoll, Merling, and P\'eroux \cite{AKMP} may shed light on this conjecture, as they expand the definition of norms for compact Lie groups. Take $R$ a $C_n$-ring spectrum; using such norms, one would like to construct a Dennis trace map $\K_{C_n}(R) \to N_{e \times C_n} ^{S^1 \times C_n} R$ which is $S^1 \times C_n$ equivariant. Here we extend the $C_n$-action on $K_{C_n}(R)$ to an $S^1 \times C_n$ action using the trivial $S^1$-action. Let $H \cong C_n$ denote the diagonal subgroup of $ C_n \times C_n \leq S^1 \times C_n$. An equivariant Dennis trace of this form would produce a map
$$\K_{C_n}(R)^{C_n} = \K_{C_n}(R)^H \to (N_{e \times C_n} ^{S^1 \times C_n} R)^H \to \Phi^H N_{e \times C_n} ^{S^1 \times C_n} R.$$
If the diagonal formulas of Proposition 2.19 of \cite{AnBlGeHiLaMa} still hold for this generalized norm, so that
$$\Phi^H N_{e \times C_n} ^{S^1 \times C_n} R \cong N_{e \times C_n /(e \times C_n \cap H)} ^{S^1 \times C_n / H} \Phi^{e \times C_n \cap H} R \cong N_{C_n} ^{S^1} R,$$
then a twisted Dennis trace 
$$\K_{C_n}(R)^{C_n} \to \THH_{C_n}(R)$$ 
would indeed arise from an equivariant Dennis trace 
$$\K_{C_n}(R) \to N_{e \times C_n} ^{S^1 \times C_n} R.$$
This norm, $N_{e \times C_n} ^{S^1 \times C_n} R$, can be thought of as $\THH(R)$ taken in the category of $C_n$-spectra. That is, it has a $C_n$-action coming from the fact that $R$ is a $C_n$-ring spectrum, and an $S^1$-action coming from the cyclic bar construction. The restriction of $N_{e \times C_n} ^{S^1 \times C_n} R$ to a $C_n$-spectrum (via the diagonal subgroup $H$) should agree with $\int_{S^1 _{rot}} R$.

\bibliographystyle{plain}
\bibliography{bib}
\end{document}